\renewcommand\labelenumi{(\roman{enumi})}
\renewcommand\theenumi\labelenumi
\newtheorem{thmx}{Theorem}
\newtheorem{corx}[thmx]{Corollary}
\providecommand\@dotsep{5}
\newcommand{\bbC}{\mathbb C}
\newcommand{\bbH}{\mathbb H}
\newcommand{\bbQ}{\mathbb Q}
\newcommand{\bbR}{\mathbb R}
\newcommand{\bbZ}{\mathbb Z}
\newcommand{\cA}{\mathcal A}
\newcommand{\cC}{\mathcal C}
\newcommand{\cD}{\mathcal D}
\newcommand{\cF}{\mathcal F}
\newcommand{\cH}{\mathcal H}
\newcommand{\cI}{\mathcal I}
\newcommand{\cM}{\mathcal M}
\newcommand{\cO}{\mathcal O}
\newcommand{\cP}{\mathcal P}
\newcommand{\cQ}{\mathcal Q}
\newcommand{\cU}{\mathcal U}
\newcommand{\cZ}{\mathcal Z}
\newcommand{\catname}[1]{{\mathsf{#1}}}
\newcommand{\TLJ}{\catname{TLJ}}
\renewcommand{\mod}{\catname{mod}}
\renewcommand{\vec}{\catname{vec}}
\newcommand{\rep}{\catname{rep}}
\newcommand{\lmod}{\text{-}\catname{mod}}
\newcommand{\lMod}{\text{-}\catname{Mod}}
\newcommand{\lMOD}{\text{-}\catname{MOD}}
\DeclareMathOperator{\cEnd}{\catname{End}}
\newcommand{\Fib}{\catname{Fib}}
\newcommand{\Fuk}{\catname{Fuk}}
\DeclareMathOperator{\Cat}{\text{Cat}}
\newcommand{\wt}{\widetilde}
\newcommand{\leftdual}[1]{{}^\vee{#1}}
\newcommand{\rightdual}[1]{{#1}^\vee}
\DeclareMathOperator{\id}{id}
\DeclareMathOperator{\Z}{\mathbb Z}
\DeclareMathOperator{\R}{\mathbb R}
\DeclareMathOperator{\C}{\mathbb C}
\newcommand{\1}{\mathds{1}}
\DeclareMathOperator{\ra}{\rightarrow}
\newcommand{\xra}{\xrightarrow}
\DeclareMathOperator{\Aut}{Aut}
\DeclareMathOperator{\Cone}{Cone}
\DeclareMathOperator{\Hom}{Hom}
\DeclareMathOperator{\rank}{rank}
\DeclareMathOperator{\Stab}{Stab}
\DeclareMathOperator{\Irr}{Irr}
\DeclareMathOperator{\FPdim}{FPdim}
\DeclareMathOperator{\Dom}{Dom}
\DeclareMathOperator{\mult}{mult}
\DeclareMathOperator{\SU}{SU}
\DeclareMathOperator{\Ker}{Ker}
\newcommand{\QCoh}{\mathrm{QCoh}}
\newcommand{\Coh}{\mathrm{Coh}}
\DeclareMathOperator*{\alb}{alb}
\DeclareMathOperator*{\Alb}{Alb}
\newcommand{\Pic}{\mathrm{Pic}}
\newcommand{\Knum}{K_{\mathrm{num}}} % numerical K group
\newcommand{\Stabgeo}{\Stab^{\mathrm{Geo}}}
\newcommand{\characteristic}{\text{\normalfont char}}
\newcommand{\ch}{\text{\normalfont ch}} % Chern character
\newcommand{\lambdanat}{\lambda_{\text{\normalfont nat}}} % Natural linearisation
\declaretheorem[numberwithin=section]{theorem}
\declaretheorem[sibling=theorem]{lemma}
\declaretheorem[sibling=theorem]{corollary}
\declaretheorem[sibling=theorem]{proposition}
\declaretheorem[sibling=theorem]{question}
\declaretheorem[sibling=theorem, style=remark]{remark}
\declaretheorem[sibling=theorem, style=definition]{definition}
\declaretheorem[sibling=theorem, style=definition]{example}
\declaretheorem[sibling=theorem, style=definition]{assumption}
\newtheorem*{theorem*}{Theorem}
\crefname{lemma}{Lemma}{Lemma}
  \crefname{corollary}{Corollary}{Corollary}
  \crefname{theorem}{Theorem}{Theorem}
  \crefname{definition}{Definition}{Definition}
   \crefname{proposition}{Proposition}{Proposition}
\crefname{question}{Question}{Question}
 \crefname{section}{Section}{Section} 
   \crefname{construction}{Construction}{Construction}
   \crefname{generalization}{Generalization}{Generalization}
  \crefname{construction}{Construction}{Construction}
  \crefname{notation}{Notation}{Notation}
   \crefname{example}{Example}{Example}
  \crefname{remark}{Remark}{Remark}
  \crefname{fact}{Fact}{Fact}
  \crefname{conjecture}{Conjecture}{Conjecture}
  \crefname{motivation}{Motivation}{Motivation}  
  \crefname{figure}{Figure}{Figure}  
  \crefname{assumption}{Assumption}{Assumption}
\newcommand{\nPi}[1][n]{{ \overset{\scriptscriptstyle #1}{\Pi}}}
\renewcommand{\comment}[1]{}
\newcommand*\cocolon{%
        \nobreak
        \mskip6mu plus1mu
        \mathpunct{}%
        \nonscript
        \mkern-\thinmuskip
        {:}%
        \mskip2mu
        \relax
}
\let\Re\relax
\DeclareMathOperator{\Re}{\mathrm{Re}} % real part
\let\Im\relax
\DeclareMathOperator{\Im}{\mathrm{Im}} % imaginary part
\begin{document}

\title[]{Fusion-equivariant stability conditions and Morita duality}

\author[]{Hannah Dell}
\address{School of Mathematics and Maxwell Institute, University of Edinburgh, James Clerk Max\-well Building, Peter Guthrie Tait Road, Edinburgh, EH9 3FD (UK)}
\email{h.dell@sms.ed.ac.uk}

\author[]{Edmund Heng}
\address{Institut des Hautes Etudes Scientifiques (IHES). Le Bois-Marie, 35, route de Chartres, 91440 Bures-sur-Yvette (France)}
\email{heng@ihes.fr}

\author[]{Anthony M. Licata}
\address{Mathematical Sciences Institute, Australian National University (ANU), Canberra (Australia)}
\email{anthony.licata@anu.edu.au}

\keywords{Stability conditions; fusion categories; finite group actions, Morita duality}

\subjclass{}

\begin{abstract}
	Given a triangulated category $\cD$ with an action of a fusion category $\cC$, we study the moduli space $\Stab_{\cC}(\cD)$ of fusion-equivariant Bridgeland stability conditions on $\cD$.  The main theorem is that the fusion-equivariant stability conditions form a closed, complex submanifold of the moduli space of stability conditions on $\cD$.  As an application of this framework to finite group actions on categories, we generalise a result of Macr\`{i}--Mehrotra--Stellari by establishing a biholomorphism between the space of $G$-invariant stability conditions on $\cD$ and the space of $\rep(G)$-equivariant stability conditions on the equivariant category $\cD^G$.  We also describe applications to the study of stability conditions associated to McKay quivers and to geometric stability conditions on free quotients of smooth projective varieties.
\end{abstract}

\maketitle

\tableofcontents

\section{Introduction}

\subsection{Fusion-equivariant stability conditions}

The moduli space $\Stab(\cD)$ of Bridgeland stability conditions on a triangulated category $\cD$, introduced in \cite{bridgeland_2007}, associates a complex manifold to a triangulated category.  
When $\cD$ has additional symmetry, one hopes to see that reflected within the stability manifold.  For example, when $\cD$ is equipped with a group action, one can consider the invariant stability conditions, which form a closed submanifold of $\Stab(\cD)$ \cite[Theorem 1.1]{MMS_09}. 

In this paper we consider a more general situation that appears in both algebraic geometry and representation theory, namely, when the triangulated category $\cD$ is acted on by a fusion category $\cC$ (see \cref{defn: module category}). 
The situation where $\cD$ is equipped with an action of a group $G$ is recovered as the special case  $\cC = \vec_G$, but there are a number of interesting examples that don't come directly from group actions.  When a fusion category $\cC$ acts on $\cD$, its fusion ring $K_0(\cC)$ then acts on the Grothendieck group $K_0(\cD)$ and on the complex numbers $\bbC$; the latter action is given by the Frobenius--Perron homomorphism.  Thus, inside the space of central charges sits a subspace of fusion-equivariant central charges $Z: K_0(\cD) \longrightarrow \bbC$ that intertwine the action of the fusion ring.

The second author, in his thesis \cite{Heng_PhDthesis}, considered a lift of this consideration from central charges to stability conditions by defining a subset $\Stab_{\cC}(\cD)$ of $\cC$-equivariant stability conditions on $\cD$ (see \cref{defn: C equivariant stab}).  
In this paper we consider the basic topological properties of the subset $\Stab_{\cC}(\cD)$.  We prove the following.
\begin{thmx}(=\cref{cor: C-equivariant complex submanifold})
The space $\Stab_{\cC}(\cD)$ of $\cC$-equivariant stability conditions is a closed, complex submanifold of $\Stab(\cD)$.
\end{thmx}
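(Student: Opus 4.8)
The plan is to exhibit $\Stab_{\cC}(\cD)$ as the intersection of $\Stab(\cD)$ with the locus of fusion-equivariant central charges, and then to use the local homeomorphism between $\Stab(\cD)$ and the space of central charges to transport the closed-submanifold structure from the linear-algebra side to the stability side.

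First I would recall Bridgeland's deformation theorem: the forgetful map $\cZ \colon \Stab(\cD) \to \Hom(K_0(\cD), \bbC)$, sending a stability condition $\sigma = (Z, \cP)$ to its central charge $Z$, is a local homeomorphism onto an open subset of the (finite- or locally finite-dimensional) vector space of linear functionals on $K_0(\cD)$. The fusion ring $K_0(\cC)$ acts on $K_0(\cD)$ and, via the Frobenius--Perron homomorphism, on $\bbC$; a central charge $Z$ is \emph{fusion-equivariant} precisely when $Z([X] \cdot [E]) = \FPdim([X]) \, Z([E])$ for all $[X] \in K_0(\cC)$ and $[E] \in K_0(\cD)$. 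Since each of these conditions is a single linear equation on $\Hom(K_0(\cD), \bbC)$, the subspace $V_{\cC} \subseteq \Hom(K_0(\cD), \bbC)$ of fusion-equivariant central charges is a \emph{complex-linear} subspace, cut out by finitely many $\bbC$-linear equations.

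Next I would argue that, on the level of stability conditions, $\cC$-equivariance is equivalent to the equivariance of the underlying central charge. One containment is immediate: any $\cC$-equivariant stability condition has $\cC$-equivariant central charge, so $\cZ(\Stab_{\cC}(\cD)) \subseteq V_{\cC}$. For the reverse, I would invoke the characterisation of $\Stab_{\cC}(\cD)$ recorded in the referenced definition (\cref{defn: C equivariant stab}): the key point is that the action functors are exact autoequivalences (up to the direct-sum multiplicity dictated by $\cC$), so that if $Z$ is equivariant and $\sigma$ is a genuine stability condition, then the action of $\cC$ automatically permutes/preserves the semistable objects and their phases. This should show that $\sigma \in \Stab(\cD)$ with $\cZ(\sigma) \in V_{\cC}$ is automatically $\cC$-equivariant, giving $\Stab_{\cC}(\cD) = \cZ^{-1}(V_{\cC})$.

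Granting this identification, the conclusion follows formally: $V_{\cC}$ is a closed complex-linear subspace of $\Hom(K_0(\cD), \bbC)$, and $\cZ$ is a local biholomorphism, so $\cZ^{-1}(V_{\cC})$ is locally cut out inside $\Stab(\cD)$ by the same $\bbC$-linear equations, hence is a closed complex submanifold whose charts are restrictions of the charts of $\Stab(\cD)$. Closedness is inherited because $V_{\cC}$ is closed and $\cZ$ is continuous, while the complex-submanifold structure is inherited because $\cZ$ is locally a biholomorphism onto an open subset of a vector space in which $V_{\cC}$ is a linear slice. The main obstacle, and the step requiring genuine work rather than formal nonsense, is the second paragraph: verifying that equivariance of the central charge propagates to equivariance of the whole heart/slicing, i.e.\ that no extra conditions beyond $\cZ(\sigma) \in V_{\cC}$ are needed to force $\sigma$ into $\Stab_{\cC}(\cD)$. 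This requires care with how the fusion action interacts with the Harder--Narasimhan filtrations and with the support property, and is where the precise form of \cref{defn: C equivariant stab} must be used.
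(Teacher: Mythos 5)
There is a genuine gap, and it sits exactly where you flagged it: the claimed identity $\Stab_{\cC}(\cD) = \cZ^{-1}(V_{\cC})$ is false. Equivariance of the central charge does \emph{not} force equivariance of the slicing. The tool you would need, \cref{prop:cCpreservesemistable}, only applies when the abelian category in question (e.g.\ the standard heart $\cP(0,1]$ of $\sigma$) is already a module subcategory over $\cC$ --- and that is precisely condition (i) of \cref{defn: C equivariant stab}, the thing you are trying to deduce. A concrete counterexample: let $\cD = D^b(\Bbbk \oplus \Bbbk\lmod)$ with $G = \Z/2\Z$ swapping the two factors (so $\cC = \vec_G$), and take the central charge $Z$ with $Z([S_1]) = Z([S_2]) = i$. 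Since $[S_2[2]] = [S_2]$, the stability condition whose heart is generated by $S_1$ and $S_2[2]$ (phases $1/2$ and $5/2$) has this same $G$-invariant central charge, yet its slicing is not $G$-invariant: $g \otimes S_1 \cong S_2$ lands in phase $5/2 \neq 1/2$. So $\cZ^{-1}(V_{\cC})$ strictly contains $\Stab_{\cC}(\cD)$ in general, and both conclusions of your final paragraph --- closedness and the submanifold structure --- collapse, since they were derived from that identification.

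The paper's proof is forced to split into two genuinely different arguments, neither of which is the global linear-slice pullback. For the submanifold property, it proves a \emph{local} strengthening of Bridgeland's deformation theorem (\cref{lem:deformation}): if $\sigma$ is already $\cC$-equivariant and $W \in \Hom_{K_0(\cC)}(K_0(\cD),\C)$ is a small equivariant deformation of $Z$, then the lifted slicing $\cQ$ is again $\cC$-equivariant --- here \cref{prop:cCpreservesemistable} is legitimately applicable, because the $W$-semistables are tested inside hearts $\cP(I)$ of the \emph{old} equivariant $\sigma$, which are honest $\cC$-module categories. This shows $\cZ$ restricts to a local homeomorphism $\Stab_{\cC}(\cD) \to \Hom_{K_0(\cC)}(K_0(\cD),\C)$ (\cref{thm:equivariantissubmfld}); your identification does hold \emph{locally} near equivariant points, but only as an output of this lemma, not an input. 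Closedness then requires a separate global argument (\cref{thm:equivariantisclosed}), adapted from Macr\`i--Mehrotra--Stellari: each $S \otimes -$ for $S \in \Irr(\cC)$ satisfies property (P1), so $\sigma \mapsto (S\otimes-)^{-1}\sigma$ is continuous on a closed domain by \cref{lem:domclosedcts}, and $\Stab_{\cC}(\cD)$ is exhibited as the preimage, under $\Psi = \prod_{S}(S\otimes-)^{-1}$, of the closed set of tuples $(\cP, \FPdim(S)\cdot Z)_{S \in \Irr(\cC)}$. If you want to rescue your write-up, replace the second paragraph with this deformation lemma and add the pullback argument for closedness.
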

\noindent In the special case where $\cC = \vec_G$ (i.e. $\cD$ has an action of $G$), we recover (part of) \cite[Theorem 1.1]{MMS_09}. We also adapt some of their techniques for our proof.
We note here that the submanifold property of $\Stab_{\cC}(\cD)$ was also obtained independently by Qiu and Zhang in \cite{QZ_fusion-stable}, where they considered fusion-equivariant stability conditions in relation to cluster theory.  

\subsection{A duality of equivariant stability conditions: \texorpdfstring{$\vec_G$ v.s.\ $\rep(G)$}{vecG v.s.\ rep(G)} }

In the case where $\cD$ is acted upon by a finite \emph{abelian} group $G$, it was known that there is an isomorphism (in fact, a biholomorphism) between $G$-invariant stability conditions on $\cD$ and $\widehat{G}$-invariant stability conditions on $\cD^G$, where $\widehat{G} \coloneqq \Hom(G,\Bbbk^\ast)$ is the Pontryagin dual of $G$ and $\cD^G$ is the category of $G$-equivariant objects; see  \cite[Lemma 4.11]{perryModuliSpacesStable2023} and \cite[Theorem 2.26]{dellStabilityConditionsFree2023}.  One immediate obstacle to extending this observation from abelian to nonabelian groups is that when $G$ is nonabelian, one needs to find a suitable replacement for the Pontryagin dual $\widehat{G}$.  This naturally brings in fusion categories: instead of $G$ and $\widehat{G}$, one should consider the (Morita) dual fusion categories $\vec(G)$ and $\rep(G)$. 
We prove the following duality theorem.
\begin{thmx}(=\cref{thm:moritastability})
	Let $\cD$ be a triangulated module category over $\vec_G$ subject to mild hypotheses.  The induction functor $\cI\colon \cD \ra \cD^G$ and the forgetful functor $\cF\colon \cD^G \ra \cD$ induce biholomorphisms
	\[
		 \Stab_{\rep(G)}(\cD^G) {\rightleftarrows} \Stab_{\vec(G)}(\cD),
	\]
	which are mutually inverse up to rescaling the central charge by $|G|$.
\end{thmx}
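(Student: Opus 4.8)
The plan is to realise both maps as pullbacks of stability data along the functors $\cF$ and $\cI$, reading off every required property from the standard structural isomorphisms relating them. First I would record that, for $G$ finite, $\cF$ and $\cI$ are \emph{biadjoint} and both faithful, that $\id_\cD$ is a direct summand of $\cF\cI \cong \bigoplus_{g\in G} T_g$ (where $T_g$ is the autoequivalence of $\cD$ induced by the invertible object $\delta_g\in\vec_G$), and that $\id_{\cD^G}$ is a direct summand of $\cI\cF \cong (-)\otimes\bbC[G]$, where $\bbC[G]=\bigoplus_\rho \rho^{\oplus\dim\rho}$ is the regular representation regarded as an object of $\rep(G)$. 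Given $\sigma=(Z,\cP)\in\Stab_{\vec(G)}(\cD)$, I would set $\Phi(\sigma)=(Z^G,\cP^G)$ on $\cD^G$ by
\[
  Z^G \coloneqq Z\circ[\cF], \qquad
  \cP^G(\phi)\coloneqq\{E\in\cD^G : \cF E\in\cP(\phi)\},
\]
and dually, given $\tau=(W,\cQ)\in\Stab_{\rep(G)}(\cD^G)$, set $\Psi(\tau)=(W^{\cI},\cQ^{\cI})$ on $\cD$ by $W^{\cI}\coloneqq W\circ[\cI]$ and $\cQ^{\cI}(\phi)\coloneqq\{X\in\cD:\cI X\in\cQ(\phi)\}$. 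In each case central charge and slicing are compatible, since the argument of $Z(\cF E)$ is the phase of the semistable object $\cF E$, and likewise for $\cI X$.

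The technical core is showing that $\Phi(\sigma)$ and $\Psi(\tau)$ are genuine stability conditions, and here I would adapt the techniques of \cite{MMS_09}. Vanishing of morphisms between descending phases is easy: for $\Phi$ it follows from faithfulness of $\cF$, as $\Hom_{\cD^G}(E,E')$ injects into $\Hom_\cD(\cF E,\cF E')=0$; for $\Psi$ it follows from biadjunction together with the summand relation, since $\Hom_\cD(X,X')$ is a direct summand of $\Hom_\cD(\cF\cI X,X')\cong\Hom_{\cD^G}(\cI X,\cI X')=0$. The genuine obstacle is the existence of Harder--Narasimhan filtrations. For $\Phi$ the key point is that the HN filtration of $\cF E$ in $\sigma$ is \emph{unique}, hence preserved by the equivariant isomorphisms $T_g(\cF E)\cong\cF E$ carried by $E$; it therefore descends to a filtration of $E$ in $\cD^G$ whose factors are $\cP^G$-semistable by construction. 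For $\Psi$ the same argument applies after invoking the symmetry of Morita duality, under which the pairs $(\vec(G),\cD,\cF)$ and $(\rep(G),\cD^G,\cI)$ play interchangeable roles, so that $\cI$ is the forgetful functor from the dual side and $X\in\cD$ carries the corresponding equivariant structure. The support property transfers by a standard argument (again as in \cite{MMS_09}), pulling back the defining quadratic form along the finite-rank lattice maps $[\cF]$ and $[\cI]$ and using that semistable objects correspond.

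It then remains to verify equivariance of the images and continuity. For $\rep(G)$-equivariance of $\Phi(\sigma)$ I would use $\cF(\rho\otimes E)\cong(\cF E)^{\oplus\dim\rho}$, so that $\rho\otimes E$ is $\cP^G$-semistable of phase $\phi$ exactly when $E$ is, while $Z^G(\rho\otimes E)=\dim(\rho)\,Z^G(E)=\FPdim(\rho)\,Z^G(E)$; the $\vec(G)$-equivariance of $\Psi(\tau)$ follows symmetrically from $T_g$-invariance of $\cQ$. Continuity is then immediate from \cref{cor: C-equivariant complex submanifold}: both equivariant loci are submanifolds on which the forgetful map to central charges is a local homeomorphism, and $\Phi,\Psi$ intertwine these charts with the continuous (indeed linear) precomposition maps $Z\mapsto Z\circ[\cF]$ and $W\mapsto W\circ[\cI]$.

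Finally I would compute the composites. Using $[\cF\cI]=\sum_{g}[T_g]$ together with the $\vec(G)$-invariance of $Z$, the central charge of $\Psi\Phi(\sigma)$ is $Z\circ[\cF\cI]=|G|\,Z$; using $[\cI\cF]=\sum_\rho\dim(\rho)[\rho\otimes-]$ together with the $\rep(G)$-equivariance of $W$, the central charge of $\Phi\Psi(\tau)$ is $\sum_\rho(\dim\rho)^2\,W=|G|\,W$. On slicings both composites are the identity: for instance $X$ is $(\cP^G)^{\cI}$-semistable of phase $\phi$ iff $\cF\cI X=\bigoplus_g T_g X$ is $\cP$-semistable of phase $\phi$, which by equivariance holds iff $X$ is. Since rescaling the central charge by the positive real $|G|$ leaves the slicing unchanged, the composites equal the $|G|$-rescaling action, so $\Phi$ and $\Psi$ are mutually inverse homeomorphisms up to this rescaling. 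The step I expect to be most delicate is the descent of Harder--Narasimhan filtrations through the equivariant structure, which is precisely where uniqueness of HN filtrations, biadjunction, and the Morita-dual symmetry must be combined with care.
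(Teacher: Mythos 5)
Your overall architecture matches the paper's proof: pull back stability data along $\cF$ and $\cI$, verify equivariance of the images via $\cF(\rho\otimes E)\cong(\cF E)^{\oplus\dim\rho}$ and $\cI(g\otimes X)\cong\cI(X)$, and compute the composites using $[\cF\circ\cI]=\sum_{g\in G}[g\otimes-]$ and $\cI\circ\cF\cong\Bbbk[G]\otimes-$ to get the $|G|$-rescaling. However, there is a genuine gap at exactly the step you flag as delicate: the existence of Harder--Narasimhan filtrations for the pulled-back data. The paper does not prove this by hand; it invokes \cite[Theorem 2.14]{MMS_09} (a special case of Polishchuk's \cite[Theorem 2.1.2]{Pol_constant-t}), and to do so it imposes \cref{assumptions: bigger categories with small coproducts}: $\cD$ is essentially small, $\cD^G$ is triangulated compatibly with $\cF$ (\cref{assump:triang}), and the $\vec_G$-action extends to a cocomplete ambient category $\wt{\cD}$. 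Your proposal supplies none of these hypotheses, and without \cref{assump:triang} even the space $\Stab(\cD^G)$ is not defined, since $\cD^G$ of a triangulated category need not be triangulated. More seriously, your argument for the $\Psi$-direction rests on an unproven appeal to ``the symmetry of Morita duality''. For nonabelian $G$ there is no dual group: $\cD$ is recovered from $\cD^G$ not as an equivariantisation but as the category of modules over the algebra object $\mathcal{O}(G)\in\rep(G)$, and descending HN filtrations through such a module structure (or, in the $\Phi$-direction, equipping each HN truncation with a coherent cocycle when cones are not functorial) is precisely the technical content of Polishchuk's theorem, whose proof is where the small-coproducts hypothesis is actually used. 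Asserting that the unique HN filtration of $\cF E$ ``therefore descends'' elides this; uniqueness up to isomorphism does not by itself produce the coherence data, and the repair (functoriality of slicing truncations for a $G$-invariant slicing, plus \cref{assump:triang} to recognise the lifted tower as exact triangles in $\cD^G$) is not in your text.

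A secondary issue: your continuity argument is circular as stated. Knowing that $\cZ$ is a local homeomorphism on both equivariant loci and that $Z\mapsto Z\circ[\cF]$ is linear does not yield continuity of $\Phi$ unless you already know $\Phi(\sigma')$ stays in the distinguished chart around $\Phi(\sigma)$ for $\sigma'$ near $\sigma$ --- which is the assertion to be proved. The paper instead gets continuity of $\cF^{-1}$ and $\cI^{-1}$ on their domains directly from the metric estimate of \cite[Lemma 2.9]{MMS_09} (its \cref{lem:domclosedcts}), and you should do the same. The remaining components of your proposal --- property \eqref{assump:nonzero} for $\cF$ and $\cI$ via biadjunction and the summand relations (the paper's \cref{lem:FIP1}), the equivariance computations, and the composite computation $\sum_\rho(\dim\rho)^2=|G|$ matching $\FPdim(\Bbbk[G])$ --- are correct and coincide with the paper's.
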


We note here that the fact that $\cF$ induces a closed embedding of $\Stab_{\vec_G}(\cD)$ into $\Stab(\cD^G)$ was already known \cite[Theorem 1.1]{MMS_09}.
Our result identifies the image of this embedding as being exactly the $\rep(G)$-equivariant stability conditions.  Another related previous result is \cite[Lemma 2.2.3]{Pol_constant-t}, which considers the case of an action of a finite group on a scheme $X$. Polishchuk describes a bijection between $\Stab_{\vec_G}(X)$ and stability conditions on $D^b\Coh^G(X)$, the derived category of $G$-equivariant sheaves, that are invariant under tensoring with the regular representation.  From our work it now follows that this bijection is a biholomorphism of closed complex submanifolds.

\subsection{On McKay quivers and free quotients}
There are a few reasons why the submanifold $\Stab_\cC(\cD)$ might be worthy of further consideration.  One simple reason is that the submanifold $\Stab_\cC(\cD)$ might be easier to understand than the entire manifold $\Stab(\cD)$, while still capturing something important about $\cD$ or its autoequivalence group.  

In \cref{sec:app}, we present two classes of examples in relation to fusion categories coming from finite groups $G$; we describe them both briefly here.  
The first class of examples comes from separated quivers $\overline{Q}_{G,V}$ of McKay quivers $Q_{G,V}$, which were studied e.g.\ in \cite{AusRei_McKay} (see \cref{defn:McKayquiver} and \cref{fig:McKayquiver}).
We fully identify the closed submanifold of $\rep(G)$-equivariant stability conditions of $\overline{Q}_{G,V}$ via the stability manifold of the $\ell$-Kronecker quiver:
%The first concerns the representation theory of McKay quivers of a group $G$ and an $\ell$-dimensional representation $V$ of $G$.  Such a pair $(G,V)$ gives rise to an action of $G$ by automorphisms on the path algebra $\Bbbk K_\ell$, where $K_{\ell}$ is the $\ell$-Kronecker quiver.  The representation theory of the skew group algebra $(\Bbbk K_\ell)G$ is governed by another quiver $Q_G$, known as the separated McKay quiver of the pair $(G,V)$. By construction, the fusion category $\rep(G)$ acts on the bounded derived category $D^bQ_G$ of the separated McKay quiver.  We identify the spaces $\Stab_{\rep(G)}(D^bQ_G)$ :

\begin{corx}(=\cref{cor:McKaystability})
	Let $\ell \coloneqq \dim_\Bbbk(V)$. We have
	\[
	\Stab_{\rep(G)}(D^b\overline{Q}_{G,V}) \cong \Stab(D^bK_\ell) \cong 
		\begin{cases}
		\C^2, &\text{if } \ell \leq 2;\\
		\C \times \bbH, &\text{if } \ell \geq 3,
		\end{cases}
	\]
\end{corx}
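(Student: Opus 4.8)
The plan is to read the two stated homeomorphisms off from the two main theorems, reducing everything to the known shape of $\Stab(D^bK_\ell)$. For the first homeomorphism I would use the standard theory of skew group algebras. The action of $G$ on $\Bbbk K_\ell$ determined by $(G,V)$ makes $D^bK_\ell$ a $\vec_G$-module category whose equivariant category $D^bK_\ell^G = (D^bK_\ell)^G$ is equivalent to $D^b\big((\Bbbk K_\ell)G\big)$; since $(\Bbbk K_\ell)G$ is Morita equivalent to the path algebra $\Bbbk Q_G$ of the separated McKay quiver, this yields a triangulated equivalence $D^bQ_G \simeq D^bK_\ell^G$. The one thing to verify is that this equivalence is one of $\rep(G)$-module categories, i.e. that it intertwines the two copies of the $\rep(G)$-action; both actions arise from the canonical $\rep(G)$-action on a $G$-equivariant category, so this is a compatibility check. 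Granting it, the induced map on fusion-equivariant stability spaces is a homeomorphism $\Stab_{\rep(G)}(D^bQ_G) \cong \Stab_{\rep(G)}(D^bK_\ell^G)$.

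For the second homeomorphism I would apply the Morita duality theorem \cref{thm:moritastability} to the $\vec_G$-module category $\cD = D^bK_\ell$, which identifies $\Stab_{\rep(G)}(D^bK_\ell^G)$ with $\Stab_{\vec_G}(D^bK_\ell)$ up to rescaling the central charge by $|G|$ (irrelevant for the homeomorphism type). It then remains to compute $\Stab_{\vec_G}(D^bK_\ell)$, and here the key point is that it is \emph{all} of $\Stab(D^bK_\ell)$. Indeed, the automorphisms of $\Bbbk K_\ell$ coming from $(G,V)$ fix the two vertices by construction, acting on the $\ell$-dimensional arrow space through $V$, so they act trivially on $K_0(D^bK_\ell) = \Z[S_0]\oplus\Z[S_1]$. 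Since every simple object of $\vec_G$ has Frobenius--Perron dimension $1$, the $\vec_G$-equivariance condition on a central charge is precisely $G$-invariance, which is now automatic; hence the space of equivariant central charges is the full $\Hom(K_0(D^bK_\ell),\C)$. By \cref{cor: C-equivariant complex submanifold}, $\Stab_{\vec_G}(D^bK_\ell)$ is therefore a closed complex submanifold of full dimension in $\Stab(D^bK_\ell)$; it is nonempty since the standard heart $\rep K_\ell$ is $G$-stable, hence it is open and closed, and so it exhausts the connected manifold $\Stab(D^bK_\ell)$ (connectedness follows from the computation recalled below).

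Finally I would invoke the known computation of the stability manifold of the generalised Kronecker quiver: $\Stab(D^bK_\ell)$ is connected and homeomorphic to $\C^2$ for $\ell = 2$ (via the derived equivalence with $\bbP^1$) and to $\C\times\bbH$ for $\ell \geq 3$. I expect the main obstacle to be the first step, namely upgrading the derived/Morita equivalence $D^bQ_G \simeq D^bK_\ell^G$ to an equivalence of $\rep(G)$-module categories, since the whole argument hinges on transporting the $\rep(G)$-action correctly; by contrast, the identification $\Stab_{\vec_G}(D^bK_\ell) = \Stab(D^bK_\ell)$ is soft once the $G$-action is seen to be trivial on $K_0$, and the final homeomorphism type is a citation.
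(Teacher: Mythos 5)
Your proposal matches the paper's proof essentially step for step: the identification $D^bQ_G \simeq D^bK_\ell^G$ via Demonet's skew-group-algebra results, the application of \cref{thm:moritastability}, the observation that $G$ acts trivially on $K_0(D^bK_\ell)$ so that $\Stab_{\vec_G}(D^bK_\ell)$ is both closed and open, hence by connectedness all of $\Stab(D^bK_\ell)$ (this is exactly \cref{lem:stabGwhole}), and finally the citations to the known computations of $\Stab(D^bK_\ell)$ for $\ell=2$ and $\ell\geq 3$. Your additional care in transporting the $\rep(G)$-action along the derived equivalence and in checking nonemptiness of the clopen subset only makes explicit what the paper leaves implicit, and is not a different argument.
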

\noindent The final isomorphism is due to the works \cite{Qiu_PhDthesis,bridgeland_qiu_sutherland_2020,Okada_P1,DK_KroneckerStab}.
Note that the generalised McKay correspondence in \cite{AusRei_McKay} shows that separated McKay quivers $\overline{Q}_{G,V}$ are wild quivers once $\ell \geq 3$, and at present the homotopy type of the entire stability manifold of such quivers is not generally known. 
%In contrast, $\Stab(D^b K_\ell)$ is well understood due to the results in \cite{Qiu_PhDthesis,Okada_P1,DK_KroneckerStab}.

The second class of examples is of a more geometric flavour. Let $X$ be a smooth projective variety, and let $G$ be a finite group acting freely on $X$. This induces an action of $G$ on $D^b\Coh(X)$, and the $G$-equivariant category is equivalent to the derived category of the smooth quotient $X/G$, $D^b\Coh^G(X)\cong D^b\Coh(X/G)$. 
When $G$ is abelian, the relationship between $\Stab(X)$ and $\Stab(X/G)$ was used by the first author in \cite{dellStabilityConditionsFree2023} to give new examples of situations where a stability manifold has a connected component consisting entirely of geometric stability conditions, which are those whose skyscraper sheaves of points $\cO_x$ are all stable. Here we generalise this to the non-abelian case. We first prove that the isomorphisms of Theorem \ref{thm:moritastability} preserve geometric stability conditions:
\begin{corx}(=\cref{cor:geometriccorrespondence})
	There are natural biholomorphisms
	\[
		\Stab_{\rep(G)}(X/G) \overset{\cong}{\rightleftarrows} \Stab_{\vec_G}(X),
	\]
	which are mutually inverse up to rescaling the central charge by $|G|$. Moreover, these isomorphisms preserve the locus of geometric stability conditions.
\end{corx}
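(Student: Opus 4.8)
The plan is to derive both parts of the statement from \cref{thm:moritastability} applied to $\cD = D^b\Coh(X)$. Since $G$ acts freely, étale descent along the quotient map $\pi\colon X \to X/G$ yields the equivalence $\cD^G = D^b\Coh(X)^G \cong D^b\Coh(X/G)$, under which the forgetful functor $\cF$ is identified with $\pi^*$ and the induction functor $\cI$ with $\pi_*$ (for finite étale $\pi$ both adjoints of $\pi^*$ are $\pi_*$). The hypotheses on the $\vec_G$-action required by \cref{thm:moritastability} are immediate in this setting, since $D^b\Coh(X)$ for $X$ smooth projective satisfies the relevant finiteness conditions and $G$ acts by exact autoequivalences. \cref{thm:moritastability} then produces the two homeomorphisms $\Stab_{\rep(G)}(X/G) \rightleftarrows \Stab_{\vec_G}(X)$, mutually inverse up to the rescaling $Z \mapsto |G|Z$, which is the first sentence of the corollary.

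For the preservation of geometricity, write $\Phi_\cF\colon \Stab_{\vec_G}(X) \to \Stab_{\rep(G)}(X/G)$ for the map induced by $\cF$, with (rescaled) inverse $\Phi_\cI$. Because $|G|$ is a positive real scalar, the rescaling $Z \mapsto |G|Z$ fixes every (semi)stable object together with its phase, so it has no effect on whether a stability condition is geometric. It therefore suffices to prove that $\sigma \in \Stab_{\vec_G}(X)$ is geometric if and only if $\Phi_\cF(\sigma)$ is, for then the homeomorphism $\Phi_\cF$ restricts to a homeomorphism between the two geometric loci. The key computation is that, $\pi$ being finite étale, $\cI(\cO_x) = \pi_*\cO_x = \cO_{\pi(x)}$ while $\cF(\cO_{\bar x}) = \pi^*\cO_{\bar x} = \bigoplus_{x \in \pi^{-1}(\bar x)} \cO_x$ is the direct sum of skyscrapers over a single free $G$-orbit.

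I would then combine this with the (semi)stability dictionary behind \cref{thm:moritastability} (following \cite{MMS_09}): an object $E \in \cD^G$ is $\Phi_\cF(\sigma)$-semistable precisely when $\cF(E)$ is $\sigma$-semistable, and if $E$ is $\Phi_\cF(\sigma)$-stable then $\cF(E)$ is a direct sum of $\sigma$-stable objects of a common phase. If $\sigma$ is geometric, then $\cF(\cO_{\bar x}) = \bigoplus_x \cO_x$ is $\sigma$-semistable of a single phase (the summands share a phase by $G$-invariance of $\sigma$), so $\cO_{\bar x}$ is $\Phi_\cF(\sigma)$-semistable; a proper equivariant subobject of the same phase would restrict to a $G$-invariant proper subsum of $\bigoplus_x \cO_x$, which is excluded because $G$ permutes the fibre transitively, so $\cO_{\bar x}$ is in fact $\Phi_\cF(\sigma)$-stable. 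Conversely, if $\cO_{\bar x}$ is $\Phi_\cF(\sigma)$-stable then $\cF(\cO_{\bar x}) = \bigoplus_x \cO_x$ is a direct sum of $\sigma$-stables, and by Krull--Schmidt each indecomposable summand $\cO_x$ is $\sigma$-stable. This gives the desired equivalence.

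The main obstacle is the passage from semistability to stability. The equivariant dictionary transports semistability transparently, but geometricity concerns the stability of the point objects, and neither a direct summand of a polystable object nor a merely semistable equivariant object need be stable in general. Both upgrades rely on the freeness of the action: transitivity of $G$ on each fibre rules out proper equivariant subobjects of $\cO_{\bar x}$, and freeness ensures that $\cF(\cO_{\bar x})$ splits into exactly $|G|$ pairwise non-isomorphic skyscrapers, so that the Krull--Schmidt matching of indecomposable with stable summands is unambiguous.
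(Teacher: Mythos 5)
Your proposal is correct, and in substance it is the argument the paper relies on; the only difference is that the paper's proof of \cref{cor:geometriccorrespondence} is essentially a citation, namely the homeomorphisms are a direct application of \cref{thm:moritastability} (with \cref{assumptions: bigger categories with small coproducts} verified via $\Coh(X)\subset \QCoh(X)$, as noted before the theorem), and preservation of geometricity is obtained by observing that the abelian-case proof of \cite[Theorem 3.3]{dellStabilityConditionsFree2023} never uses the $\widehat{G}$-action and hence generalises verbatim. What you have done, blind, is reconstruct the content of that citation: the identities $\pi_\ast\cO_x=\cO_{\pi(x)}$ and $\pi^\ast\cO_{\bar x}=\bigoplus_{x\in\pi^{-1}(\bar x)}\cO_x$, the tautological dictionary ``$E$ is $(\pi^\ast)^{-1}\sigma$-semistable iff $\pi^\ast E$ is $\sigma$-semistable'' built into the definition of the induced slicing, and freeness used exactly twice (multiplicity-one fibres, so subobjects of $\bigoplus_x\cO_x$ in the finite-length abelian category $\cP(\phi)$ are subsums of pairwise non-isomorphic simples; transitivity of $G$ on the fibre, to exclude proper $G$-stable subsums). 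The one ingredient you invoke that is stated nowhere in the present paper is the polystability claim ($E$ stable $\Rightarrow$ $\cF(E)$ a direct sum of $\sigma$-stables of one phase), which you attribute to the dictionary of \cite{MMS_09}; it is true and quickly provable with the paper's own tools: for a $\sigma$-stable subobject $A\subseteq\cF(E)$ of the same phase $\phi$, finite \'etaleness gives $\Hom_X(A,\pi^\ast E)\cong\Hom_{X/G}(\pi_\ast A,E)$, the object $\pi_\ast A$ lies in the induced $\cP'(\phi)$ because $\pi^\ast\pi_\ast A\cong\bigoplus_{g\in G}g\otimes A\in\cP(\phi)$ by $G$-invariance, and simplicity of $E$ in $\cP'(\phi)$ makes the resulting nonzero map $\pi_\ast A\to E$ an epimorphism, so $\cF(E)$ is a quotient in $\cP(\phi)$ of a semisimple object and hence itself semisimple. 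Alternatively, your converse direction can bypass this lemma and Krull--Schmidt entirely: each $\cO_{x}$ is $\sigma$-semistable of phase $\phi$ as a direct summand of the semistable $\pi^\ast\cO_{\bar x}$, and if $A\subsetneq\cO_x$ were a stable subobject of the same phase, then $\pi_\ast A\hookrightarrow\cO_{\bar x}$ would be a nonzero same-phase monomorphism in $\cP'(\phi)$, forced by simplicity of $\cO_{\bar x}$ to be an isomorphism; applying $Z'=Z\circ\pi^\ast$ and $G$-invariance of $Z$ yields $|G|\,Z(A)=|G|\,Z(\cO_x)$, so the nonzero quotient $\cO_x/A\in\cP(\phi)$ would have vanishing central charge, a contradiction. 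With either patch, your write-up is a complete, self-contained replacement for the paper's appeal to \cite{dellStabilityConditionsFree2023}.
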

Recall that every algebraic variety $X$ has a morphism $\alb_X$, the \emph{Albanese morphism}, to an abelian variety $\Alb(X):=\Pic^0(\Pic^0(X))$, called the \emph{Albanese variety}. Every morphism $f\colon X\rightarrow A$ to another abelian variety $A$ factors via $\alb_X$. In \cite[Theorem 1.1]{fuStabilityManifoldsVarieties2022}, the authors showed that if $X$ has finite Albanese morphism, then all stability conditions on $D^b\Coh(X)$ are geometric. In \cref{thm: Albanese connected component}, we use the above isomorphisms to obtain a union of connected components of geometric stability conditions on any free quotient of a variety with finite Albanese morphism. In particular, as in the abelian case \cite[Examples 3.12 and 3.14]{dellStabilityConditionsFree2023}, this produces new examples of varieties with non-finite Albanese morphism whose stability manifolds have a connected component of only geometric stability conditions (see Examples \ref{example: CY 3fold of abelian type}, \ref{example: GHVs}, and \ref{example: non abelian Beauville-type}). Since the stability manifold of any variety is expected to be connected, this provides further evidence that the following question has a negative answer.

\begin{question}[{\cite[Question 4.11]{fuStabilityManifoldsVarieties2022}}]\label{Question FLZ}
	Let $X$ be a smooth projective variety with $\alb_X$ non-finite. Are there always non-geometric stability conditions on $D^b\Coh(X)$?
\end{question}

\subsection{Fusion-equivariant stability conditions in Coxeter theory}
Another class of examples comes from Coxeter theory, the rank two examples of which were introduced in \cite{Heng_PhDthesis}.  While these examples are not studied directly in the current paper, they are important motivation for our work, so we describe them briefly here.

If $(W,S)$ is a Coxeter system, then there is an associated 2-Calabi--Yau category $\cD_W$, which always has a non-empty stability manifold $\Stab(\cD_W)$.  When $W$ is simply-laced (e.g. finite type ADE), this 2-Calabi--Yau category is well-studied \cite{bridgeland2009kleinian, brav_thomas_2010, ikeda2014stability}, and can be defined using representations of the preprojective algebra of the associated quiver (or, quadratic-dually, via representations of the associated zigzag algebra).  For simply-laced $W$, a connected component of the stability manifold is a covering space of the associated complexified hyperplane complement:
\[
\pi: \Stab(\cD_W) \longrightarrow \mathfrak{h}_{reg}/W,
\]
(Here $\mathfrak{h}_{reg}$ denotes the complement of the root hyperplanes in the complexified Tits cone.) This is important in part because if $\Stab(\cD_W)$ is contractible -- as is conjectured and proven for finite ADE types \cite{QW_18, AW_22} (independent of Deligne's proof of the $K(\pi,1)$ conjecture for spherical type Artin--Tits groups \cite{Deligne_72}) -- then it follows that $\mathfrak{h}_{reg}/W$ is a $K(\pi,1)$ for the associated Artin-Tits braid group.  Even showing that $\Stab(\cD_W)$ is simply-connected would solve the word problem in simply-laced Artin--Tits groups (see  e.g.\ \cite[Proposition 7.11]{hengstabilitycoxeter}), a long-standing open problem.  Thus, in simply-laced type, stability conditions on the 2-Calabi--Yau category arise naturally in the study of the associated braid group.

When $W$ is not simply-laced, however, it is less clear how to produce a moduli space of stability conditions which covers $\mathfrak{h}_{reg}/W$, in part because it might not be immediately obvious what the definition of the 2-Calabi--Yau category of a non-simply-laced Coxeter group should be.  In \cite{hengstabilitycoxeter}, we define a 2-Calabi--Yau category $\cD_W$ for an \emph{arbitrary} Coxeter system by first building zigzag and preprojective algebras in an appropriate fusion category $\cC_W$.
This generalises the rank two case studied earlier in \cite[Chapter 2]{Heng_PhDthesis}.
 As a direct consequence of the construction, the fusion category $\cC_W$ acts on the triangulated category $\cD_W$.  Thus one might hope that the moduli space of stability conditions on $\cD_W$ might cover $\mathfrak{h}_{reg}/W$ for all $W$.  However, in non-simply-laced type, the dimension of the entire stability manifold $\Stab(\cD_W)$ is too large to admit a covering map to $\mathfrak{h}_{reg}/W$.  The submanifold of 
$\cC_W$-equivariant stability conditions, however, does admit such a map.  We prove in \cite{hengstabilitycoxeter} that for any Coxeter group $W$, 
$\Stab_{\cC_W}(\cD_W)$ is a covering of $\mathfrak{h}_{reg}/W$.
Moreover, we conjecture that this covering is universal, and is contractible.  
For a similar approach to the non-simply-laced finite type cases, see \cite{QZ_fusion-stable}.
All in all, to study $K(\pi,1)$s for arbitrary Coxeter groups, it is therefore useful to work fusion-equivariantly.

In fact, we expect the entire picture described above for Coxeter systems to eventually be presented as a small part of the theory of $J$-equivariant stability conditions on cell quotients of the Hecke category of $W$, where $J$ is the categorified $J$-ring of Lusztig.

\subsection*{Notation and Conventions} 
$\Bbbk$ will be an algebraically closed field, $\cA$ will be an abelian category, and $\cD$ will be a triangulated category. $D^b\cA $ will denote the bounded derived category of $\cA$. $\Stab(\cD)$ will denote the space of all stability conditions on $\cD$ that are locally finite, except in \cref{subsection: free quotients} and \cref{section: support property} where we assume the support property.

\subsection*{Acknowledgements}
The authors thank the referee for suggestions and improvements, and especially for pointing out that our identification of closed submanifolds of stability conditions is in fact biholomorphic.
The authors would like to thank Arend Bayer, Asilata Bapat, Anand Deopurkar, Ailsa Keating, Abel Lacabanne, Clement Delcamp, Franco Rota, and Michael Wemyss for interesting and helpful discussions.  
We would also like to thank Yu Qiu for informing us about his related work with XiaoTing Zhang.
Part of this project began from a discussion of E. H. and H. D. during the conference ``Quivers, Cluster, Moduli and Stability'' at Oxford, for which the authors would like to thank the organisers for making this possible.
H.D. is supported by the ERC Consolidator Grant WallCrossAG, No. 819864.

\section{Preliminaries on fusion categories and actions}
\subsection{Fusion categories}
We shall quickly review the notion of a fusion category and some of its (desired) properties. 
We refer the reader to \cite{etingof_nikshych_ostrik_2005, EGNO15} for more details on this subject.

\begin{definition}\label{defn: tensor fusion category}
A (strict) \emph{fusion category} $\cC$ is a $\Bbbk$-linear, hom-finite, abelian category with a (strict) monoidal structure $(- \otimes -, \1)$ satisfying the following properties:
\begin{enumerate}
\item the monoidal unit $\1$ is simple;
\item $\cC$ is semisimple;
\item $\cC$ has finitely many isomorphism classes of simple objects; and
\item every object $C \in \cC$ has a left dual $\leftdual{C}$ and a right dual $\rightdual{C}$ with respect to the tensor product $\otimes$ (rigidity).
\end{enumerate}
A set of representatives of isomorphism classes of simple objects in $\cC$ will always be denoted by $\Irr(\cC)$.
\end{definition}

\begin{example}\label{eg:fusioncat}
\begin{enumerate}
\item The simplest example of a fusion category is the category of finite dimensional $\Bbbk$-vector spaces, $\vec$.
More generally, the category of $G$-graded vector spaces, $\vec_G$, for a finite group $G$ is also fusion, with simples given by $g \in G$ representing the one-dimensional vector space with grading concentrated in $g$. \label{eg:vecG}
\item Let $G$ be a finite group. If $\characteristic(\Bbbk) \nmid |G|$, then the category of representations of $G$, $\rep(G)$, is a fusion category. If $\characteristic(\Bbbk) \mid |G|$ and $|G|>1$, then $\rep(G)$ is not a fusion category since it is not semisimple \cite[Proposition 3.2]{etingofIntroductionRepresentationTheory2011} \label{eg:repG}
\item The Fibonacci (golden ratio) fusion category $\Fib$ is the (unique up to equivalence) fusion category with two simple objects $\1, \Pi$ and fusion rules 
\begin{equation} \label{eqn:goldenratiorel}
\Pi \otimes \Pi \cong \1 \oplus \Pi.
\end{equation}
This category can be explicitly realised as a fusion subcategory of the fusion category associated to $U_q(\mathfrak{s}\mathfrak{l}_2)$ at $q = e^{i\pi/5}$. \label{eg:Fib}
\end{enumerate} 
\end{example}

\begin{definition}
Let $\cA$ be an abelian category.
Its (exact) \emph{Grothendieck group} $K_0(\cA)$ is the abelian group freely generated by isomorphism classes $[A]$ of objects $A \in \cA$ modulo the relation
\[
[B] = [A] + [C] \iff 0 \ra A \ra B \ra C \ra 0 \text{ is exact}.
\]
Similarly, for $\cD$ a triangulated category, its (exact) \emph{Grothendieck group} $K_0(\cD)$ is the abelian group freely generated by isomorphism classes $[X]$ of objects $X \in \cD$ modulo the relation
\[
[Y] = [X] + [Z] \iff X \ra Y \ra Z \ra X[1] \text{ is a distinguished triangle}.
\]
When $\cA = \cC$ is moreover a fusion category, $K_0(\cC)$ can be equipped with a unital $\mathbb{N}$-ring structure.
Namely, it has basis given by the (finite) isomorphism classes of simples $\{ [S_j] \}_{j=0}^n$ and multiplication $[S_k]\cdot[S_i] \coloneqq [S_k \otimes S_i]$ defined by the tensor product that satisfies
\begin{equation} \label{eqn:simpledecomposition}
[S_k] \cdot [S_i] = \sum_{i=0}^n {}_k r_{i,j} [S_j], \quad {}_k r_{i,j} \in \mathbb{N},
\end{equation}
where $[S_0] = [\1]$ is the unit.
Together with this ring structure, we call $K_0(\cC)$ the \emph{fusion ring} of $\cC$.
\end{definition}

All non-zero objects of a fusion category have a well-defined notion of (not necessarily integral) dimension defined as follows.
\begin{definition}\label{defn:FPdim}
Let $\cC$ be a fusion category with $n+1$ simple objects $\{ S_k \}_{k=0}^n$.
For each simple $S_k$, we define $\FPdim([S_k]) \in \R$ to be the Frobenius--Perron eigenvalue (the unique largest real eigenvalue) of the non-negative matrix $({}_k r_{i,j})_{i,j \in \{0,1,...,n\}}$.
This extends to a ring homomorphism \cite[Proposition 3.3.6(1)]{EGNO15} that we denote by 
\[
\FPdim\colon K_0(\cC) \ra \R.
\]
The \emph{Frobenius--Perron dimension}, $\FPdim(X)$, of an object $X \in \cC$ is defined to be $\FPdim([X])$.
\end{definition}

The following proposition characterises the Frobenius--Perron dimension:
\begin{proposition}[\protect{\cite[Proposition 3.3.6(3)]{EGNO15}}] \label{prop:uniquedim}
Let $\cC$ be a fusion category.
There is at most one ring homomorphism $\varphi\colon K_0(\cC) \ra \R$ satisfying the following properties:
\begin{enumerate}
\item $\varphi([X]) \geq 0$ for all $X \in \cC$; and
\item $\varphi([X]) = 0$ if and only if $X \cong 0 \in \cC$.
\end{enumerate}
\end{proposition}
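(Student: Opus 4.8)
The plan is to show that the two positivity constraints force the values of $\varphi$ on the simple objects to be the Frobenius--Perron dimensions, from which uniqueness is immediate. Since $\{[S_0], \ldots, [S_n]\}$ is a $\bbZ$-basis of $K_0(\cC)$, a ring homomorphism $\varphi$ is completely determined by the real numbers $d_k \coloneqq \varphi([S_k])$. Conditions (i) and (ii) give $d_k > 0$ for every $k$, since each $S_k$ is a nonzero object. First I would apply $\varphi$ to the fusion relation \eqref{eqn:simpledecomposition}, obtaining, for each fixed $k$,
\[
d_k\, d_i = \sum_{j} {}_k r_{i,j}\, d_j \qquad \text{for all } i.
\]
Reading this as a statement about the nonnegative integer matrix $N_k \coloneqq ({}_k r_{i,j})_{i,j}$, it says precisely that the strictly positive vector $\mathbf d = (d_0, \ldots, d_n)^{\mathsf T}$ satisfies $N_k \mathbf d = d_k \mathbf d$; that is, $\mathbf d$ is a positive eigenvector of $N_k$ with eigenvalue $d_k$.

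The crux is then a standard fact from Perron--Frobenius theory: a nonnegative square matrix admitting an eigenvector with all entries strictly positive must have the corresponding eigenvalue equal to its spectral radius $\rho(N_k)$. I would prove this inline by choosing a nonnegative left Perron eigenvector $w$ of $N_k$, so that $w^{\mathsf T} N_k = \rho(N_k)\, w^{\mathsf T}$ with $w \geq 0$, $w \neq 0$ (such a $w$ exists for any nonnegative matrix), and pairing it with $\mathbf d$. Comparing $w^{\mathsf T} N_k \mathbf d = \rho(N_k)\, w^{\mathsf T}\mathbf d$ with $w^{\mathsf T} N_k \mathbf d = d_k\, w^{\mathsf T}\mathbf d$, and noting $w^{\mathsf T}\mathbf d > 0$ because $\mathbf d$ is strictly positive while $w$ is nonnegative and nonzero, forces $d_k = \rho(N_k)$. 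By \cref{defn:FPdim} the spectral radius $\rho(N_k)$ is exactly $\FPdim([S_k])$.

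Since this pins down $d_k = \FPdim([S_k])$ for every simple, any two homomorphisms satisfying (i)--(ii) agree on the basis and hence coincide (both equalling $\FPdim$), which is the asserted uniqueness. The only delicate point is the Perron--Frobenius input: I must use the version valid for \emph{arbitrary} nonnegative matrices rather than the irreducible case, since the fusion matrices $N_k$ need not be irreducible. I expect this to be the main obstacle to keep honest, but it is handled cleanly by the left-eigenvector pairing above, which never requires irreducibility --- only the existence of a nonnegative (not necessarily positive) left eigenvector at the spectral radius, which holds in full generality.
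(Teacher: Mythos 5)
Your argument is correct and complete. Note that the paper offers no proof of this statement at all---it is imported verbatim from \cite[Proposition 3.3.6(3)]{EGNO15}---so the relevant comparison is with the proof in that reference. There, uniqueness is deduced by considering multiplication by the single object $Z=\bigoplus_i S_i$: rigidity shows that the matrix of $[Z]$ acting on $K_0(\cC)$ has strictly \emph{positive} entries, so the uniqueness clause of Perron's theorem for positive matrices (a positive matrix has, up to scale, only one eigenvector with nonnegative entries) pins down the vector $\bigl(\varphi([S_i])\bigr)_i$ up to a scalar, which is then fixed by $\varphi(\1)=1$. Your route is genuinely different and arguably cleaner: you work with each fusion matrix $N_k$ separately and invoke only the weak Perron--Frobenius theorem for \emph{arbitrary} nonnegative matrices (existence of a nonnegative left eigenvector at the spectral radius), with the pairing $w^{\mathsf T}N_k\mathbf d$ forcing $d_k=\rho(N_k)=\FPdim([S_k])$ outright. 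This sidesteps both the positivity/irreducibility verification and the normalization step, since the eigenvalue itself---not merely an eigenvector up to scale---is determined; you were right to flag irreducibility as the delicate point, and your left-eigenvector pairing handles it correctly, as $w^{\mathsf T}\mathbf d>0$ needs only $w\geq 0$, $w\neq 0$ against the strictly positive $\mathbf d$. Two small remarks: your computation silently corrects the index typo in \eqref{eqn:simpledecomposition} (the sum should run over $j$), and the identification $\rho(N_k)=\FPdim([S_k])$ is consistent with \cref{defn:FPdim} because for a nonnegative matrix the spectral radius is itself an eigenvalue, hence the largest real one.
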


\begin{remark}
It is known that $\FPdim(X) \geq 1$ for any object $X \neq 0 \in \cC$; see \cite[Proposition 3.3.4]{EGNO15}.
\end{remark}

\begin{example}\label{eg:FPdim}
\begin{enumerate}
\item In both \cref{eg:fusioncat}\ref{eg:vecG} and \ref{eg:repG}, $\FPdim(X)$ agrees with taking the dimension, $\dim(X)$, of its underlying vector space, as $\dim$ defines a ring homomorphism $\dim\colon K_0(\cC) \ra \R$ satisfying the conditions in \cref{prop:uniquedim}.
More generally, any fusion category with a monoidal, exact, faithful functor $\cF: \cC \to \vec$ (e.g.\ a fiber functor) has $\FPdim(X) = \dim(\cF(X))$.
\item For $\Fib$ from \cref{eg:fusioncat}\ref{eg:Fib}, the relation in \eqref{eqn:goldenratiorel} dictates that $\Pi$ must have $\FPdim(\Pi) = 2\cos(\pi/5)$, i.e. the golden ratio (hence its name).
\end{enumerate}
\end{example}

\subsection{Module categories over fusion categories}
\begin{definition} \label{defn: module category}
A left (resp. right) \emph{module category} over a fusion category $\cC$ is an additive category $\cM$ together with an additive monoidal functor $\cC (\text{resp. } \cC^{\otimes^\text{op}}) \ra \cEnd(\cM)$ into the additive, monoidal category of additive endofunctors $\cEnd(\cM)$.
We shall also say that $\cC$ \emph{acts} on $\cM$.
When $\cM$ is moreover abelian or triangulated, we insist that $\cC$ act on $\cM$ by exact endofunctors.
Given an object $C \in \cC$, the associated (exact) endofunctor will be denoted by $C \otimes -$.
\end{definition}
On the level of the Grothendieck group, a left (resp. right) module category structure on $\cM$ over $\cC$ induces a left (resp. right) module structure on $K_0(\cM)$ over the fusion ring $K_0(\cC)$. The action is given by
\[
[C] \cdot [A] \coloneqq [C \otimes A] \in K_0(\cM).
\]

\begin{example}\label{eg:modulecat}
\begin{enumerate}
\item \label{item:vec-action} Every additive, $\Bbbk$-linear category $\cA$ is naturally a module category over the category of finite dimensional $\Bbbk$-vector spaces $\vec$. 
The action of $\Bbbk^m \in \vec$ is defined on $A \in \cA$ by
$
\Bbbk^m \boxtimes A \coloneqq A^{\oplus m},
$
and every morphism $A \xra{f} A'$ in $\cA$ is sent to
$
\Bbbk^m \boxtimes f \colon A^{\oplus m} \xra{\bigoplus_{i=1}^{m} f} A'^{\oplus m}.
$
Each $n \times m$ matrix $M \in \Hom_{\vec}(\Bbbk^m, \Bbbk^n)$ can be viewed as a morphism from $A^{\oplus m}$ to $A^{\oplus n}$ for each $A \in \cA$ (using linearity of $\cA$), so it defines a natural transformation.
The whole category $\vec$ acts on $\cA$ by some choice of equivalence to its skeleton generated by $\Bbbk$ -- i.e.\ picking a basis.
\item Any fusion category is always a left (and right) module category over itself. Each object $C$ gets sent to the functor $C \otimes -$ (resp. $- \otimes C$).
\item  \label{eg:A4}
Consider the $A_4$ quiver $Q$ with the bipartite orientation.
By viewing $Q$ as the unfolded quiver of the Coxeter quiver
\begin{tikzcd}[column sep = small]
\bullet \ar[r,"5"] & \bullet 
\end{tikzcd},
we obtain an action of $\Fib \cong \TLJ_5^{even}$ on $\rep(Q) \cong \rep(\bullet \xra{5} \bullet)$; see \cite[Example 2.6, 3.2 and 4.12]{heng2023coxeter}.
The exact action of $\Pi \in \Fib$ on the 10 indecomposables of $\rep(Q)$ is indicated on the AR-quiver by orange arrows (see \cref{fig:A4quiver}).
Note that $\1\otimes-$ acts trivially as it is by definition (isomorphic to) the identity functor.
\end{enumerate} 
\end{example}

Some important examples of module categories that we will study in more detail in \cref{sec:Moritadual} come from actions of finite groups (and, as we will see, actions of $\rep(G)$ on its Morita dual).
\begin{definition} \label{defn:Gaction}
Let $G$ be a finite group. Let $\Cat(G)$ be the monoidal category with objects the elements of $G$, the only morphisms are the identities and the tensor product is given by the multiplication of $G$ (so the identity of $G$ is the monoidal unit).
An \emph{action of $G$} on a category $\cM$ is a monoidal functor $T\colon \Cat(G) \ra \cEnd(\cM)$.
As before, if $\cM$ is moreover abelian or triangulated, we will tacitly assume that the functors $T_g\coloneqq T(g)$ are all exact.
\end{definition}

\begin{remark}
\begin{enumerate}
		\item The definition above agrees with the following ``piece-by-piece'' definition that may be more familiar to some readers.
		An action of $G$ consists of an assignment of (invertible) endofunctors $T_g \in \cEnd(\cM)$ for each $g \in G$, together with isomorphisms of functors $\gamma_{g,h}\colon T_g \circ T_{h} \overset{\cong}{\ra} T_{gh}$ for each pair $g,h\in G$, which satisfy $\gamma_{gh,k} \circ \gamma_{g,h} = \gamma_{g,hk} \circ \gamma_{h,k}$.
		The compatibility data are encapsulated by the term ``monoidal functor'' in our definition.

		Note that this definition of a $G$-action (first introduced in \cite{deligneActionGroupeTresses1997}) is more than a group homomorphism $G\rightarrow \cEnd(\cM)$. This finer notion is required to define the $G$-equivariantisation of a category in \cref{defn:equivariantisation}. We refer the reader to \cite[Section 2.2]{beckmannEquivariantDerivedCategories2023} for details on obstructions to lifting such a homomorphism to a $G$-action.
		\item An action of $G$ determines an action of $\vec_G$: We need to define an additive monoidal functor $\Psi\colon \vec_G \rightarrow \cEnd(\cM)$. Since $\vec_G$ is semisimple, it is enough to specify what happens to the simples $g\in G$. Indeed, let $\Psi(g)\coloneqq T_g$, then this extends to any $G$-graded vector space as follows
		\begin{equation*}
			\Psi \colon \oplus_{g\in G} V_g \mapsto \sum_{g\in G} \dim V_g T_g \in \cEnd(\cM).
		\end{equation*}
		The data of the natural isomorphisms $T_g\circ T_h \xrightarrow{\sim}T_{gh}$ then also determines natural isomorphisms $\Psi(X)\otimes \Psi(Y)\xrightarrow{\sim}\Psi(X\otimes Y)$ for any $X,Y\in\vec_G$.
		\item An action of $\vec_G$ determines an action of $G$: the action of $\vec_G$ specifies an endofunctor $T(g)$ for each simple $g\in G$. The rest of the data of a $G$-action can again be determined by the natural isomorphisms of the action of $\vec_G$.
	\end{enumerate}
\end{remark}
\begin{figure}
% https://q.uiver.app/#q=WzAsMTAsWzAsMSwiUF8yIl0sWzEsMCwiUF8xIl0sWzIsMSwiXFx0YXUoU18zKSJdLFszLDAsIklfNCJdLFs0LDEsIlNfMyJdLFswLDMsIlBfNCJdLFsxLDIsIlBfMyJdLFsyLDMsIlxcdGF1KFNfMSkiXSxbMywyLCJJXzIiXSxbNCwzLCJTXzEiXSxbMCwxXSxbMCw2XSxbNSw2XSxbMSwyXSxbNiwyXSxbNiw3XSxbMiwzXSxbMiw4XSxbOCw0XSxbMyw0XSxbNyw4XSxbOCw5XSxbNCwyLCIiLDEseyJzdHlsZSI6eyJib2R5Ijp7Im5hbWUiOiJkYXNoZWQifX19XSxbMiwwLCIiLDEseyJzdHlsZSI6eyJib2R5Ijp7Im5hbWUiOiJkYXNoZWQifX19XSxbOCw2LCIiLDEseyJzdHlsZSI6eyJib2R5Ijp7Im5hbWUiOiJkYXNoZWQifX19XSxbOSw3LCIiLDEseyJzdHlsZSI6eyJib2R5Ijp7Im5hbWUiOiJkYXNoZWQifX19XSxbNyw1LCIiLDEseyJzdHlsZSI6eyJib2R5Ijp7Im5hbWUiOiJkYXNoZWQifX19XSxbMywxLCIiLDEseyJzdHlsZSI6eyJib2R5Ijp7Im5hbWUiOiJkYXNoZWQifX19XSxbNSwwLCIiLDEseyJvZmZzZXQiOjEsImNvbG91ciI6WzMwLDYwLDYwXX1dLFsxLDYsIiIsMSx7Im9mZnNldCI6MSwiY29sb3VyIjpbMzAsNjAsNjBdfV0sWzcsMiwiIiwxLHsib2Zmc2V0IjoxLCJjb2xvdXIiOlszMCw2MCw2MF19XSxbMyw4LCIiLDEseyJvZmZzZXQiOjEsImNvbG91ciI6WzMwLDYwLDYwXX1dLFs5LDQsIiIsMSx7Im9mZnNldCI6MSwiY29sb3VyIjpbMzAsNjAsNjBdfV0sWzAsNSwiIiwxLHsib2Zmc2V0IjoxLCJjb2xvdXIiOlszMCw2MCw2MF19XSxbNiwxLCIiLDEseyJvZmZzZXQiOjEsImNvbG91ciI6WzMwLDYwLDYwXX1dLFsyLDcsIiIsMSx7Im9mZnNldCI6MSwiY29sb3VyIjpbMzAsNjAsNjBdfV0sWzgsMywiIiwxLHsib2Zmc2V0IjoxLCJjb2xvdXIiOlszMCw2MCw2MF19XSxbNCw5LCIiLDEseyJvZmZzZXQiOjEsImNvbG91ciI6WzMwLDYwLDYwXX1dXQ==
\centering
$Q \coloneqq$ 
% https://q.uiver.app/#q=WzAsNCxbMCwzLCJcXGJ1bGxldCJdLFsxLDIsIlxcYnVsbGV0Il0sWzAsMSwiXFxidWxsZXQiXSxbMSwwLCJcXGJ1bGxldCJdLFswLDFdLFsyLDFdLFsyLDNdXQ==
\begin{tikzcd}[column sep=small, row sep=small]
	& 4 \\
	3 \\
	& 2 \\
	1
	\arrow[from=4-1, to=3-2]
	\arrow[from=2-1, to=3-2]
	\arrow[from=2-1, to=1-2]
\end{tikzcd}; 
\qquad
AR quiver of $Q =$
\begin{tikzcd}[column sep=small, row sep=small]
	& {P_1} && {I_4} \\
	{P_2} \arrow[loop above,color={rgb,255:red,214;green,153;blue,92}] && {\tau(S_3)} \arrow[loop above,color={rgb,255:red,214;green,153;blue,92}] && {S_3} \arrow[loop above,color={rgb,255:red,214;green,153;blue,92}]\\
	& {P_3} \arrow[loop below,color={rgb,255:red,214;green,153;blue,92}] && {I_2} \arrow[loop below,color={rgb,255:red,214;green,153;blue,92}]\\
	{P_4} && {\tau(S_1)} && {S_1}
	\arrow[from=2-1, to=1-2]
	\arrow[from=2-1, to=3-2]
	\arrow[from=4-1, to=3-2]
	\arrow[from=1-2, to=2-3]
	\arrow[from=3-2, to=2-3]
	\arrow[from=3-2, to=4-3]
	\arrow[from=2-3, to=1-4]
	\arrow[from=2-3, to=3-4]
	\arrow[from=3-4, to=2-5]
	\arrow[from=1-4, to=2-5]
	\arrow[from=4-3, to=3-4]
	\arrow[from=3-4, to=4-5]
	\arrow[dashed, from=2-5, to=2-3]
	\arrow[dashed, from=2-3, to=2-1]
	\arrow[dashed, from=3-4, to=3-2]
	\arrow[dashed, from=4-5, to=4-3]
	\arrow[dashed, from=4-3, to=4-1]
	\arrow[dashed, from=1-4, to=1-2]
	\arrow[shift right, color={rgb,255:red,214;green,153;blue,92}, from=4-1, to=2-1]
	\arrow[shift right, color={rgb,255:red,214;green,153;blue,92}, from=1-2, to=3-2]
	\arrow[shift right, color={rgb,255:red,214;green,153;blue,92}, from=4-3, to=2-3]
	\arrow[shift right, color={rgb,255:red,214;green,153;blue,92}, from=1-4, to=3-4]
	\arrow[shift right, color={rgb,255:red,214;green,153;blue,92}, from=4-5, to=2-5]
	\arrow[shift right, color={rgb,255:red,214;green,153;blue,92}, from=2-1, to=4-1]
	\arrow[shift right, color={rgb,255:red,214;green,153;blue,92}, from=3-2, to=1-2]
	\arrow[shift right, color={rgb,255:red,214;green,153;blue,92}, from=2-3, to=4-3]
	\arrow[shift right, color={rgb,255:red,214;green,153;blue,92}, from=3-4, to=1-4]
	\arrow[shift right, color={rgb,255:red,214;green,153;blue,92}, from=2-5, to=4-5]
\end{tikzcd}.
\caption{The bipartite $A_4$ quiver (left) and its AR quiver (right). The orange arrows indicate the action of $\Pi \in \Fib$.} \label{fig:A4quiver}
\end{figure}

\section{The closed submanifold of fusion-equivariant stability conditions}
Throughout this section, we will always take ``module category'' to mean ``\emph{left} module category''; the proofs corresponding to right module categories can be obtained through straight-forward modifications.
The notation $\cD$ will always denote a triangulated category.

We begin by recalling the definition of stability conditions and the $G$-invariant version of them, which we shall later generalise to the fusion-equivariant setting
\subsection{Recap: stability functions and stability conditions}
We first recall the notion of stability for abelian categories.
\begin{definition} \label{defn:abelianstability}
A \emph{stability function} on an abelian category $\cA$ is a group homomorphism $Z\in \Hom_{\Z}(K_0(\cA),\C)$ such that for all non-zero objects $E$ in $\cA$,
\[
Z(E)\in \mathbb{H} \cup \R_{<0} = \{re^{i\pi\phi} : r \in \R_{> 0} \text{ and } 0 < \phi \leq 1\}.
\]
Writing $Z(E) = m(E)e^{i\pi\phi}$, we call $\phi$ the \emph{phase} of $E$, which we will denote by $\phi(E)$.
We say that $E$ is $Z$-\emph{semistable} (resp. $Z$-\emph{stable}) if every subobject $A \hookrightarrow E$ satisfies $\phi(A) \leq$ (resp. $<$) $\phi(E)$ . \\
A stability function is said to have the \emph{Harder-Narasimhan (HN) property} if every object $A$ in $\cA$ has a filtration
\[
\begin{tikzcd}[column sep = 3mm]
%%row 1
0
	\ar[rr] &
{}
	{} &
A_1
	\ar[rr] \ar[dl]&
{}
	{} &	
A_2
	\ar[rr] \ar[dl]&
{}
	{} &	
\cdots
	\ar[rr] &
{}
	{} &
A_{m-1}
	\ar[rr] &
{}
	{} &
A_m = A
	\ar[dl] \\
%% row 2
{}
	{} &
E_1
	\ar[lu, dashed] &
{}
	{} &
E_2
	\ar[lu, dashed] &
{}
	{} &
{}
	{} &
{}
	{} &
{}
	{} &
{}
	{} &
E_m
	\ar[lu, dashed] &
\end{tikzcd}
\]
such that all $E_i$ are semistable and $\phi(E_i) > \phi(E_{i+1})$.
Note that this filtration, if it exists, is unique up to isomorphism.
Hence we define $\phi^+(A) \coloneqq \phi(E_1)$ and $\phi^-(A) \coloneqq \phi(E_m)$.
\end{definition}

\begin{definition}
	The \textit{heart of a bounded} $t$\textit{-structure} in a triangulated category $\cD$ is a full additive subcategory $\cA$ such that:
	\begin{enumerate}[(1)]
		\item If $k_1>k_2$ then $\Hom_\cD(\cA[k_1],\cA[k_2])=0$.
		\item For any object $X$ in $\cD$ there are integers $k_1>k_2>\cdots > k_n$, and a sequence of exact triangles:
		\[
\begin{tikzcd}[column sep = 3mm]
%%row 1
0
	\ar[rr] &
{}
	{} &
X_1
	\ar[rr] \ar[dl]&
{}
	{} &	
X_2
	\ar[rr] \ar[dl]&
{}
	{} &	
\cdots
	\ar[rr] &
{}
	{} &
X_{m-1}
	\ar[rr] &
{}
	{} &
X_m = X
	\ar[dl] \\
%% row 2
{}
	{} &
A_1
	\ar[lu, dashed] &
{}
	{} &
A_2
	\ar[lu, dashed] &
{}
	{} &
{}
	{} &
{}
	{} &
{}
	{} &
{}
	{} &
A_m
	\ar[lu, dashed] &
\end{tikzcd}
\]
		such that $A_i\in \cA[k_i]$ for $1\leq i \leq n$.
	\end{enumerate} 
\end{definition}

\begin{definition}
A \emph{slicing} on a triangulated category $\cD$ is a collection of full additive subcategories $\cP(\phi)$ for each $\phi \in \R$ such that
\begin{enumerate}[(1)]
\item $\cP(\phi + 1) = \cP(\phi)[1]$;
\item $\Hom_\cD( \cP(\phi_1), \cP(\phi_2)) = 0$ for all $\phi_1 > \phi_2$; and
\item for each object $X \in \cD$, there exists a filtration of $X$:
\[
\begin{tikzcd}[column sep = 3mm]
%%row 1
0
	\ar[rr] &
{}
	{} &
X_1
	\ar[rr] \ar[dl]&
{}
	{} &	
X_2
	\ar[rr] \ar[dl]&
{}
	{} &	
\cdots
	\ar[rr] &
{}
	{} &
X_{m-1}
	\ar[rr] &
{}
	{} &
X_m = X
	\ar[dl] \\
%% row 2
{}
	{} &
E_1
	\ar[lu, dashed] &
{}
	{} &
E_2
	\ar[lu, dashed] &
{}
	{} &
{}
	{} &
{}
	{} &
{}
	{} &
{}
	{} &
E_m
	\ar[lu, dashed] &
\end{tikzcd}
\]
such that $E_i \in \cP(\phi_i)$ and $\phi_1 > \phi_2 > \cdots > \phi_m$.
\end{enumerate}
Objects in $\cP(\phi)$ are said to be \emph{semistable} with \emph{phase} $\phi$ and the (unique up to isomorphism) filtration in (3) is called the \emph{Harder-Narasimhan (HN) filtration} of $X$.
We will also call the $E_i$ the \emph{HN semistable factors of} $X$.
As before, we shall denote $\phi^+(X) \coloneqq \phi_1$ and $\phi^-(X) \coloneqq \phi_m$.
\end{definition}

Each $\cP(\phi)$ is in fact an abelian category \cite[Lemma 5.2]{bridgeland_2007}. Non-zero simple objects of $\cP(\phi)$ are called $\sigma$-\textit{stable} of phase $\phi$. Given a slicing $\cP$, and interval $I\subset \bbR$, denote by $\cP(I)$ the smallest additive subcategory containing all objects $A$ such that the phases of the semistable factors all live in $I$.
%denote by $\cP(\phi, \phi+1]$ the smallest additive subcategory containing all objects $A$ such that the phases of the semistable pieces all live in $(\phi, \phi+1]$.
Then $\cP(\phi, \phi+1]$ is the heart of a bounded $t$-structure.
In particular, we call $\cP(0,1]$ the \emph{standard heart} of the slicing $\cP$.
As such, a slicing should be thought of as a refinement of a t-structure.

\begin{definition}[\protect{\cite[Definition 1.1.]{bridgeland_2007}}]
A (Bridgeland) \emph{stability condition} $(\cP, Z)$ on $\cD$ is a slicing $\cP$ and a group homomorphism $Z\in \Hom_{\Z}(K_0(\cD), \C)$, called the \emph{central charge}, satisfying the following condition: for all non-zero semistable objects $E\in \cP(\phi)$,
\[
Z(E) = m(E)e^{i\pi\phi}, \text{ with } m(E) \in \R_{>0}.
\]
The set of stability conditions on $\cD$ is denoted by $\Stab(\cD)$.
\end{definition}
The stability conditions we consider in this paper are required to be \textit{locally-finite}; that is, we require that there exists $\varepsilon > 0$ such that for all $\phi \in \R$, $\cP(\phi - \varepsilon, \phi + \varepsilon)$ is a quasi-abelian category of finite-length; see \cite{Schneiders_quasiabelian} for more details on quasi-abelian categories.
Moreover, if $\cD$ comes with a natural choice of group homomorphism $v\colon K_0(\cD) \ra \Lambda$ to a finite-rank free $\Z$-module $\Lambda$ (a lattice), we shall tacitly assume that all central charges $Z$ factor through $v$.

\begin{example}
	Suppose $\cD$ is Ext-finite and has a Serre functor. Then we can consider the \textit{numerical Grothendieck group}, $\Knum(\cD)$, which is the quotient of $K(\cD)$ by the null-space of the Euler form $\chi(E,F)=\sum_i (-1)^i \dim\left(\Hom(E,F[i])\right)$. $\Knum(\cD)$ is a free $\Z$-module. When $\Knum(\cD)$ has finite rank, we can choose $v\colon K_0(\cD)\ra \Lambda=\Knum(\cD)$ to be the quotient morphism, and consider stability conditions whose central charges factor through $v$. We call such stability conditions \textit{numerical}.

	If $X$ is a smooth projective variety, then $\Knum(X)\coloneqq\Knum(D^b\Coh(X))$ has finite rank by the Riemann--Roch theorem. When $X$ is a surface, $\Knum(X)$ is the image of the Chern character map $\ch\colon K_0(X)\rightarrow H^*(X,\bbQ)$.
\end{example}

\begin{remark}
	It is often assumed that stability conditions satisfy a stronger condition called the \textit{support property} (see \cref{defn: support property}). All of our results hold with this extra assumption, and in one case the proof is simpler; see \cref{section: support property} for details.
\end{remark}

The following criterion by Bridgeland relates stability conditions on $\cD$ and stability functions on its heart (which is abelian).
\begin{proposition}[\protect{\cite[Proposition 5.3]{bridgeland_2007}}] \label{prop:stabonab}
To give a stability condition on $\cD$ is equivalent to giving a heart of a bounded $t$-structure $\cH$ of $\cD$ and a stability function on $\cH$ satisfying the HN property.
\end{proposition}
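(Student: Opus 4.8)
The plan is to establish the two directions of the equivalence separately, in each case producing from the given data exactly the structure demanded by the other.

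\emph{From a stability condition to a heart with a stability function.} Given $(\cP, Z)$, I would take $\cH = \cP(0,1]$, which is the heart of a bounded $t$-structure as recalled immediately above. To see that $Z$ restricts to a stability function on $\cH$, observe that a nonzero $E \in \cH$ has slicing-HN pieces $E_1, \dots, E_m$ with phases $\phi_i \in (0,1]$, so $Z(E) = \sum_i m(E_i) e^{i \pi \phi_i}$ with every $m(E_i) > 0$. Each summand has non-negative imaginary part, vanishing precisely when $\phi_i = 1$ (the strictly negative real axis), so the region $\mathbb{H} \cup \R_{<0}$ is closed under such sums; hence $Z(E)$ lands in it and is nonzero. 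I would then verify that, for $\phi \in (0,1]$, the objects of $\cP(\phi)$ are exactly the $Z$-semistable objects of $\cH$ of phase $\phi$: one inclusion applies the Hom-vanishing of the slicing to a would-be destabilising subobject, the other uses the slicing-HN filtration. Consequently the slicing-HN filtration of an object of $\cH$ coincides with its abelian HN filtration, which gives the HN property.

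\emph{From a heart with a stability function to a stability condition.} Given $\cH$ with a stability function $Z$ having the HN property, I would define $\cP(\phi)$ for $\phi \in (0,1]$ to be the $Z$-semistable objects of $\cH$ of phase $\phi$ (together with $0$), and extend to all $\phi \in \R$ by $\cP(\phi+1) = \cP(\phi)[1]$; this forces slicing axiom (1) and the compatibility $Z(E) = m(E) e^{i\pi\phi}$. For axiom (2), I would first show inside $\cH$ that $\Hom(E,F) = 0$ whenever $E, F$ are $Z$-semistable with $\phi(E) > \phi(F)$, since the image of a nonzero map would be a quotient of $E$ and a subobject of $F$, contradicting semistability of one of them. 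I would then promote this to $\Hom(\cP(\phi_1), \cP(\phi_2)) = 0$ for $\phi_1 > \phi_2$ by combining the in-heart vanishing with the $t$-structure vanishing $\Hom(\cH, \cH[k]) = 0$ for $k < 0$, treating the integer and fractional parts of the phases separately.

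The main work, and the step I expect to be the principal obstacle, is axiom (3): the existence of a slicing-HN filtration for an arbitrary $X \in \cD$. Here I would first apply the bounded $t$-structure to decompose $X$ into finitely many cohomology objects $A_i \in \cH$ sitting in distinct shifts $\cH[k_i]$, and then refine each $A_i$ by its abelian HN filtration into $Z$-semistable factors. The delicate point is assembling these two layers into a \emph{single} tower whose semistable factors have \emph{strictly decreasing} phases (an integer shift $[1]$ raising the phase by $1$): one must sort all factors coming from all the $A_i[k_i]$ by phase and realise the reordered tower as an iterated cone, using the Hom-vanishing of axiom (2) together with the octahedral axiom to rebuild the triangles. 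I would carry this out by induction on the number of factors, at each stage splitting off the highest-phase piece; the Hom-vanishing guarantees that the reassembly is well defined and unique up to isomorphism, while the locally-finite hypothesis ensures each slice $\cP(\phi)$ has finite length so that the refinement terminates.
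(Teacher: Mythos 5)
This proposition is quoted by the paper from Bridgeland's original article and is not proved there, so the comparison is with the standard proof being cited. Your outline reconstructs that proof along essentially the same lines: in one direction, taking $\cH=\cP(0,1]$, checking $Z(E)\in\bbH\cup\R_{<0}$ by summing the contributions of the slicing-HN factors, and identifying $\cP(\phi)$ with the $Z$-semistable objects of phase $\phi$ in $\cH$; in the other, defining $\cP(\phi)$ from the semistables, deducing Hom-vanishing by splitting phases into integer and fractional parts, and building HN filtrations by refining the cohomology tower of the bounded $t$-structure. All of this is correct and is the intended argument.

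Two corrections to the step you flag as the principal obstacle. First, no sorting or reordering is ever needed in axiom (3). In the cohomology tower the factors are $A_i\in\cH[k_i]$ with $k_1>k_2>\cdots>k_n$; the abelian HN factors of $A_i$, after the shift, have phases in $(k_i,k_i+1]$, and since $k_{i+1}+1\leq k_i$ the phases of distinct layers are automatically strictly separated, while within each layer the abelian HN property gives strict decrease. Concatenating the refinements therefore already produces strictly decreasing phases, and the only genuine technical content is the splicing lemma: inserting the shifted abelian HN filtration of $A_i$ between $X_{i-1}$ and $X_i$, with the octahedral axiom verifying that the successive cones are the expected semistable factors. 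The machinery you propose for reassembling a \emph{reordered} tower from Hom-vanishing would indeed be delicate, but the situation it is designed for never arises. Second, your appeal to local finiteness for termination is misplaced: the refinement terminates because each $A_i$ has a finite HN filtration (this is exactly the HN property assumed of the stability function) and the $t$-structure is bounded (finitely many $A_i$). Local finiteness is an additional condition, imposed in the paper only later and for the deformation theory of \cref{thm:stabmfld}, and plays no role in this equivalence.
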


Bridgeland shows that $\Stab(\cD)$ comes with a natural metric topology, and his main theorem is that $\Stab(\cD)$ is moreover a complex manifold. We remind the reader that all stability conditions were assumed to be locally-finite.
\begin{theorem}[\protect{\cite[Theorem 1.2]{bridgeland_2007}}] \label{thm:stabmfld}
The space of stability conditions on $\cD$, $\Stab(\cD)$, is a complex manifold, with the forgetful map $\cZ$ defining the local homeomorphism 
\begin{align*}
\cZ\colon \Stab(\cD) &\ra \Hom_{\Z}(K_0(\cD),\C) \\
(\cP, Z) &\mapsto Z.
\end{align*}
If moreover $\cD$ comes with a group homomorphism $v\colon K_0(\cD) \ra \Lambda\cong\Z^n$, and all central charges are assumed to factor via $v$, then $\Stab(\cD)$ is a finite-dimensional complex manifold, modelled on $\Hom_{\Z}(\Lambda, \C) \cong \C^n$.
\end{theorem}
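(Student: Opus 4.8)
The plan is to reduce everything to the single assertion that the forgetful map $\cZ$ is a local homeomorphism onto an open subset of $\Hom_\Z(\Lambda, \C) \cong \C^n$. Once this is established, the complex manifold structure comes for free: every point of $\Stab(\cD)$ has a neighborhood that $\cZ$ identifies with an open set in the fixed complex vector space $\Hom_\Z(\Lambda,\C)$, and since all of these charts are restrictions of the \emph{one} global map $\cZ$ into the \emph{same} target, the transition functions are (restrictions of) the identity and hence trivially holomorphic. Thus the complex structure is canonical, and the dimension equals $\dim_\C \Hom_\Z(\Lambda,\C) = n$. Everything therefore reduces to a local statement around a fixed $\sigma_0 = (\cP, Z) \in \Stab(\cD)$.

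First I would equip $\Stab(\cD)$ with the generalised metric
\[
d(\sigma_1, \sigma_2) = \sup_{0 \neq E \in \cD} \left\{ |\phi_1^-(E) - \phi_2^-(E)|,\ |\phi_1^+(E) - \phi_2^+(E)|,\ \left|\log \tfrac{m_1(E)}{m_2(E)}\right| \right\},
\]
where $m_i(E) = |Z_i(E)|$, and check that the induced topology is the one in which $\cZ$ is continuous. Local injectivity of $\cZ$ is the easier half: if two stability conditions share the same central charge and their slicings are close in $d$, then comparing phases slice by slice forces the slicings to agree, so $\cZ$ is injective on a small $d$-ball around $\sigma_0$.

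The main content is the deformation lemma, which supplies the local inverse. Given a group homomorphism $W \colon K_0(\cD) \to \C$ close to $Z$ (say $|W(E) - Z(E)| < \sin(\pi\varepsilon)\,|Z(E)|$ for every $\sigma_0$-semistable $E$, with $\varepsilon$ small), I would construct a new slicing $\cQ$ so that $\tau := (\cQ, W)$ is a stability condition with $d(\sigma_0, \tau) < \varepsilon$. The construction proceeds slice by slice: for each $\phi$ the thin slice $\cP(\phi - \varepsilon, \phi + \varepsilon)$ is a quasi-abelian category of finite length by the local-finiteness hypothesis, and for $W$ in the prescribed range the perturbed central charge still restricts to a stability function on this slice after an appropriate rotation. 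Finite length then guarantees the Harder--Narasimhan property there (via \cref{prop:stabonab} applied to the thin slice), and gluing the refined filtrations across overlapping slices assembles the global slicing $\cQ$. One must then verify that $\cQ$ genuinely satisfies the slicing axioms, namely the vanishing $\Hom(\cQ(\phi_1), \cQ(\phi_2)) = 0$ for $\phi_1 > \phi_2$ and the existence and uniqueness of HN filtrations for every object.

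I expect the delicate estimates in the deformation lemma to be the principal obstacle. The crux is to control, uniformly over all objects $E$, how far the phases $\phi^\pm(E)$ can move under the perturbation $Z \rightsquigarrow W$, so as to guarantee simultaneously that the new semistable subcategories do not collide (so the $\Hom$-vanishing survives) and that the finite-length structure on each thin slice is preserved. Local finiteness is precisely the hypothesis that makes the HN argument on each slice go through, and it is the technical heart of the whole estimate. Once the lemma is in hand, the assignment $W \mapsto \tau$ furnishes a continuous inverse to $\cZ$ on a neighborhood of $Z$, exhibiting $\cZ$ as the desired local homeomorphism; since by assumption all central charges factor through $v \colon K_0(\cD) \to \Lambda$, the target is $\Hom_\Z(\Lambda, \C) \cong \C^n$ and the stated dimension follows.
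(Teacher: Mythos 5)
Your proposal is correct and follows essentially the same route as the source: the paper states this theorem without proof, citing Bridgeland, and your sketch reconstructs Bridgeland's original argument — local injectivity via the generalised metric $d$, the deformation lemma (which the paper itself records as \cref{lem:deformation}, constructing $\cQ(\phi)$ from $W$-semistables in the thin slice $\cP(\phi-\varepsilon,\phi+\varepsilon)$), and trivially holomorphic transition functions since all charts are restrictions of the single map $\cZ$. The only cosmetic imprecision is your appeal to \cref{prop:stabonab}, which is stated for hearts of bounded $t$-structures, whereas the HN argument on a thin slice uses its quasi-abelian, finite-length variant — exactly as in Bridgeland's paper.
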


Finally, we recall that $\Stab(\cD)$ comes with two commuting continuous actions.
The first of which is by the group of autoequivalences $\Aut(\cD)$, where each $\Phi \in \Aut(\cD)$ acts via:
\[
\Phi \cdot (\cP, Z) = (\cP', Z'), \quad \cP'(\phi) := \Phi(\cP(\phi)), \quad Z'(E) := Z(\Phi^{-1}(E)).
\]
The second of which is by the group $\C$ (under addition), where each $a + ib \in \C$ acts via:
\begin{equation}\label{eq:CactiononStab}
(a+ib) \cdot (\cP, Z) = (\cP', Z'), \quad \cP'(\phi) := \cP(\phi+a), \quad Z'(E) := e^{-(a+ib)i\pi} Z(E).
\end{equation}

\subsection{\texorpdfstring{$G$}{G}-invariant stability conditions}
\begin{definition}\label{defn:Ginvariantstab}
Suppose $\cD$ is equipped with an action of a finite group $G$ (see \cref{defn:Gaction}).
A stability condition $(\cP,Z) \in \Stab(\cD)$ is called \emph{$G$-invariant} if:
\begin{enumerate}
\item $T_g \cP(\phi) = \cP(\phi)$   for all $g \in G$ and for all $\phi \in \R$; and
\item $Z \in \Hom_\bbZ(K_0(\cD), \C)^G \subseteq \Hom_{\Z}(K_0(\cD), \C)$,
\end{enumerate}
where $\Hom_\bbZ(K_0(\cD), \C)^G$ denotes the $\C$-linear subspace of $G$-invariant homomorphisms $Z$, i.e. those satisfying $Z(T_g(X)) = Z(X)$ for all $X \in \cD$.
We use $\Stab_G(\cD) \subseteq \Stab(\cD)$ to denote the subset of all $G$-invariant stability conditions.
\end{definition}

Macr\`i--Mehrotra--Stellari showed that $\Stab_G(\cD)$ is a \emph{closed} complex submanifold:
\begin{theorem}[\protect{\cite[Theorem 1.1]{MMS_09}}] \label{thm:MMSclosedsubmfld}
The subset of $G$-invariant stability conditions $\Stab_G(\cD)$ is a closed complex submanifold of $\Stab(\cD)$.
\end{theorem}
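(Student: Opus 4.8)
The plan is to realise $\Stab_G(\cD)$ as the fixed locus of the $G$-action on $\Stab(\cD)$ and then transport the chart structure coming from Bridgeland's local homeomorphism $\cZ$ of \cref{thm:stabmfld} onto this fixed locus. First I would observe that a stability condition $(\cP,Z)$ is $G$-invariant in the sense of \cref{defn:Ginvariantstab} precisely when it is fixed by every $T_g$ under the continuous $\Aut(\cD)$-action recalled above: since $T_g\cdot(\cP,Z)=(T_g\cP, Z\circ T_g^{-1})$, fixing the slicing coordinate is condition (1) and fixing the central-charge coordinate is condition (2). Hence $\Stab_G(\cD)=\bigcap_{g\in G}\mathrm{Fix}(T_g)$. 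Each $\mathrm{Fix}(T_g)$ is the locus where the continuous self-map $T_g$ agrees with the identity, hence is closed in the Hausdorff space $\Stab(\cD)$, and the finite intersection is closed. This disposes of the closedness claim.

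For the submanifold structure the key point is that $\cZ$ is $G$-equivariant for the linear $G$-action $Z\mapsto Z\circ T_g^{-1}$ on $\Hom_\Z(K_0(\cD),\C)$, whose fixed subspace is exactly the $\C$-linear subspace $V:=\Hom(K_0(\cD),\C)^G$. I would fix $\sigma\in\Stab_G(\cD)$ and choose, using that $\cZ$ is a local homeomorphism, an open $W\ni\sigma$ on which $\cZ$ restricts to a homeomorphism onto an open subset of the central-charge space. Because $G$ is finite and $\sigma$ is fixed, $U:=\bigcap_{g\in G}T_g(W)$ is a $G$-invariant open neighbourhood of $\sigma$ contained in $W$, so $\cZ|_U$ is still injective.

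The crucial step I would then prove is a rigidity statement: for $\sigma'\in U$ one has $\sigma'\in\Stab_G(\cD)$ if and only if $\cZ(\sigma')\in V$. The forward direction is immediate from condition (2). Conversely, if $Z':=\cZ(\sigma')$ is $G$-invariant, then $\cZ(T_g\sigma')=Z'\circ T_g^{-1}=Z'=\cZ(\sigma')$; since $U$ is $G$-invariant, both $T_g\sigma'$ and $\sigma'$ lie in $U$, and injectivity of $\cZ|_U$ forces $T_g\sigma'=\sigma'$ for all $g$, i.e. $\sigma'$ is fixed. Thus $\cZ|_U$ restricts to a homeomorphism $U\cap\Stab_G(\cD)\xrightarrow{\cong}\cZ(U)\cap V$ onto an open subset of $V$. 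As $V$ is a complex-linear subspace, complemented by the averaging projector $\tfrac{1}{|G|}\sum_{g\in G}g$, these charts exhibit $\Stab_G(\cD)$ as a complex submanifold whose transition maps are restrictions of those of $\Stab(\cD)$.

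The main obstacle to anticipate is exactly this rigidity step: invariance of the central charge $Z'$ does not on its own force the slicing to be $G$-invariant, so one cannot argue pointwise at the level of Definition \ref{defn:Ginvariantstab}. The essential input that repairs this is the local injectivity of $\cZ$ together with the finiteness of $G$, which permits shrinking to a $G$-invariant neighbourhood; combined, they upgrade ``$Z'$ fixed'' to ``$\sigma'$ fixed'' on $U$, and it is this that makes $\cZ$ into a chart for $\Stab_G(\cD)$ modelled on the linear subspace $V$.
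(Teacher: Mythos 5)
Your proof is correct, but it takes a genuinely different route from the paper's. The paper does not prove this statement in the group setting directly; it deduces it as the special case $\cC=\vec_G$ of \cref{cor: C-equivariant complex submanifold}, whose proof has two quite different ingredients: for the submanifold part it strengthens Bridgeland's deformation lemma (\cref{lem:deformation}), using the explicit description of the deformed slicing $\cQ(\phi)$ as the $W$-semistables in $\cP(\phi-\varepsilon,\phi+\varepsilon)$ together with \cref{prop:cCpreservesemistable} (semistability of $A$ is equivalent to semistability of $C\otimes A$ for an equivariant stability function with the HN property) to show that any deformation of the central charge inside $\Hom_{K_0(\cC)}(K_0(\cD),\C)$ lifts to an equivariant stability condition; for closedness it exhibits $\Stab_\cC(\cD)$ as the preimage of a closed set under the continuous pullback maps $(S\otimes -)^{-1}$ defined on the closed loci $\Dom((S\otimes-)^{-1})$ (\cref{lem:domclosedcts}, \cref{lem:Cactionpullback}). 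Your argument instead exploits invertibility: closedness as the fixed locus $\bigcap_{g\in G}\mathrm{Fix}(T_g)$ of finitely many continuous self-maps (valid since the topology on $\Stab(\cD)$ is induced by Bridgeland's generalized metric $d$, hence Hausdorff — worth stating explicitly), and the submanifold property via the $G$-invariant shrinking $U=\bigcap_g T_g(W)$ (note $U\subseteq W$ because $T_e=\mathrm{id}$) plus the rigidity step, where local injectivity of $\cZ$ upgrades ``$Z'$ invariant'' to ``$\sigma'$ invariant''; surjectivity of $\cZ|_{U\cap\Stab_G(\cD)}$ onto $\cZ(U)\cap V$ then comes for free, so you never need to know how deformed slicings are constructed. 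What each approach buys: yours is more elementary, using only the statement of \cref{thm:stabmfld} rather than the proof of \cref{lem:deformation}, and it avoids any analogue of \cref{prop:cCpreservesemistable}; but it depends essentially on the $T_g$ being autoequivalences acting on $\Stab(\cD)$ with a common fixed point at which one can shrink invariantly. For a general fusion action this mechanism is unavailable — $C\otimes-$ is not invertible, does not act on $\Stab(\cD)$, and its pullback rescales the charge by $\FPdim(C)$ while being defined only on a domain — which is precisely why the paper's heavier machinery is needed for its main theorem, and why your argument, while clean, would not generalise beyond the group case.
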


\subsection{Fusion-equivariant stability conditions}
Let us now generalise to the case where $\cD$ is equipped with the structure of a module category over some fusion category $\cC$ (cf. \cref{defn: module category}).
The triangulated $\cC$-module category structure on $\cD$ induces a $K_0(\cC)$-module structure on its Grothendieck group $K_0(\cD)$.
On the other hand, the ring homomorphism induced by the Frobenius--Perron dimension (see \cref{defn:FPdim})
\[
\FPdim\colon K_0(\cC) \ra \R \subset \C,
\]
provides $\C$ with a $K_0(\cC)$-module structure as well.
As such, we can consider the $\C$-linear subspace of $K_0(\cC)$-module homomorphisms inside $\Hom_{\Z}(K_0(\cD), \C)$. These are exactly the homomorphisms $Z\colon K_0(\cC)\rightarrow \C$ such that for all $C\in \cC$ and $[X]\in K_0(\cD)$
\begin{equation} \label{eqn:C-equivariantcentralcharge}
	Z([C]\cdot [X])= Z([C\otimes X])=\FPdim(C) Z([X]).
\end{equation}
We denote the subspace of such homomorphisms by $\Hom_{\Z}(K_0(\cD), \C)$. We shall use these structures to define fusion-equivariant stability conditions as follows.
\begin{definition}\label{defn: C equivariant stab}
Let $\cD$ be a triangulated module category over $\cC$ and let $\sigma = (\cP, Z)$ be a stability condition on $\cD$.
A stability condition $(\cP,Z) \in \Stab(\cD)$ is called \emph{fusion-equivariant} over $\cC$, or \emph{$\cC$-equivariant}, if:
\begin{enumerate}
\item $C \otimes \cP(\phi) \subseteq \cP(\phi)$ for all $C \in \cC$ and all $\phi \in \R$; and
	\label{item:slicingequivariant}
\item $Z \in \Hom_{K_0(\cC)}(K_0(\cD), \C) \subseteq \Hom_{\Z}(K_0(\cD), \C)$, i.e.\ $Z$ satisfies \eqref{eqn:C-equivariantcentralcharge}.
	\label{item:centralchargeequivariant}
\end{enumerate}
We use $\Stab_{\cC}(\cD) \subseteq \Stab(\cD)$ to denote the subset of all $\cC$-equivariant stability conditions.
\end{definition}
Here are some trivial examples:
\begin{example}
\begin{enumerate}
\item With $\cC = \vec$, all stability conditions are $\vec$-equivariant.
\item Let $G$ be a finite group acting on $\cD$; equivalently $\cD$ is a module category over $\vec_G$. 
Then $\vec_G$-equivariant stability conditions are the same as $G$-invariant stability conditions.
\end{enumerate}
\end{example}
Notice that for a fusion-equivariant stability condition, each $\cP(\phi)$ is itself an abelian module category over $\cC$.
It follows that the standard heart $\cH\coloneqq \cP(0,1]$ is an abelian  module category over $\cC$; in fact any $\cP(a,a+1]$ is.
As such, the Grothendieck group $K_0(\cH) \cong K_0(\cD)$ also has a module structure over $K_0(\cC)$.
With the $K_0(\cC)$-module structure on $\C$ as before, we see that the induced stability function $Z\colon K_0(\cH) \ra \C$ on $\cH$ is a $K_0(\cC)$-module homomorphism.
Motivated by this, we make the following definition:
\begin{definition} \label{defn: stab function over C}
Let $\cA$ be an abelian module category over $\cC$.
A stability function $Z\colon K_0(\cA) \ra \C$ is \emph{$\cC$-equivariant} if $Z$ is moreover a $K_0(\cC)$-module homomorphism.
\end{definition}

The following crucial result (first shown in the second-named author's thesis \cite{Heng_PhDthesis}) puts restrictions on the behaviour of semistable objects under the action of fusion category, when our stability function is equivariant and satisfies the HN property. We emphasise here that $C \otimes -$ need not be invertible.

\begin{proposition} \label{prop:cCpreservesemistable}
Let $\cA$ be an abelian module category over $\cC$.
Suppose we have  $Z \in \Hom_{K_0(\cC)}(K_0(\cA),\C) \subseteq \Hom_{\Z}(K_0(\cA),\C)$, a $\cC$-equivariant stability function on $\cA$ that moreover satisfies the Harder--Narasimhan property.
Then for any object $A$ in $\cA$, the following are equivalent:
\begin{enumerate}
\item $A$ is semistable with phase $\phi$; \label{prop:stable}
\item $C \otimes A$ is semistable with phase $\phi$ for all non-zero $C \in \cC$; and \label{prop:allstable}
\item $C \otimes A$ is semistable with phase $\phi$ for some non-zero $C \in \cC$. \label{prop:somestable}
\end{enumerate}
\end{proposition}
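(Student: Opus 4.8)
The plan is to establish the three equivalences by reducing everything to a single \emph{core claim}: if $A$ is semistable of phase $\phi$ and $S \in \cC$ is simple, then $S \otimes A$ is semistable of phase $\phi$. First I would record the central charge computation. Since $Z$ is a $K_0(\cC)$-module homomorphism and $K_0(\cC)$ acts on $\C$ through $\FPdim$, we have $Z(C \otimes A) = \FPdim(C) \cdot Z(A)$ for every $C \in \cC$ and every $A \in \cA$. As $\FPdim(C) \geq 1 > 0$ for $C \neq 0$, this shows $C \otimes A \neq 0$ whenever $A \neq 0$, and that $\phi(C \otimes A) = \phi(A)$; in particular the standard estimate $\phi_+(C \otimes A) \geq \phi(A) \geq \phi_-(C \otimes A)$ holds unconditionally.

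Next come the reductions among the three items. The implication (ii)$\Rightarrow$(iii) is immediate (take $C = \1$). Using semisimplicity of $\cC$ to write $C \cong \bigoplus_j S_j^{\oplus n_j}$, we get $C \otimes A \cong \bigoplus_j (S_j \otimes A)^{\oplus n_j}$; since a finite direct sum of nonzero objects is semistable of phase $\phi$ if and only if each summand is, the assertion ``$C \otimes A$ is semistable of phase $\phi$ for all $C \neq 0$'' is equivalent to ``$S \otimes A$ is semistable of phase $\phi$ for every simple $S$''. Thus (i)$\Rightarrow$(ii) is exactly the core claim. For (iii)$\Rightarrow$(i): granting the core claim, apply the exact functor $C \otimes -$ to the Harder--Narasimhan filtration of $A$; its subquotients become the $C \otimes E_i$, which by the core claim (applied to the semistable $E_i$) are semistable with strictly decreasing phases $\phi(E_1) > \cdots > \phi(E_m)$. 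Hence this \emph{is} the HN filtration of $C \otimes A$, so $C \otimes A$ is semistable if and only if $A$ is, and the phases agree. All three items are therefore equivalent once the core claim is proven.

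For the core claim I would argue by contradiction using rigidity. Because $\cC$ is rigid, the endofunctor $S \otimes -$ on $\cA$ has left and right adjoints $\leftdual{S} \otimes -$ and $\rightdual{S} \otimes -$, giving natural isomorphisms $\Hom(\rightdual{S} \otimes -, -) \cong \Hom(-, S \otimes -)$ and $\Hom(S \otimes -, -) \cong \Hom(-, \leftdual{S} \otimes -)$. Suppose $S \otimes A$ is not semistable. Its top HN piece $F \hookrightarrow S \otimes A$ is semistable of phase $\phi_+(S \otimes A) \geq \phi$, and under adjunction the inclusion corresponds to a nonzero map $\rightdual{S} \otimes F \to A$; dually the bottom HN quotient $S \otimes A \twoheadrightarrow F'$ yields a nonzero map $A \to \leftdual{S} \otimes F'$. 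Since $A$ is semistable of phase $\phi$, the image $I \subseteq A$ of the first map has phase $\leq \phi$, and transporting the surjection $\rightdual{S} \otimes F \twoheadrightarrow I$ back through the adjunction a second time produces a nonzero map $F \to S \otimes I$ whose image is a quotient of the semistable $F$, hence a subobject of $S \otimes I$ of phase $\geq \phi_+(S \otimes A)$.

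The main obstacle is that this last step is genuinely circular: to conclude one needs precisely the phase bound $\phi_+(S \otimes I) \leq \phi_+(I)$ (equivalently $\phi_-(S \otimes I) \geq \phi_-(I)$) for the subobject $I \subseteq A$, and this is itself a special case of the core claim. I therefore expect the real work to be organizing these two mutually dependent phase bounds into a single well-founded induction. Concretely, I would prove $\phi_+(S \otimes X) \leq \phi_+(X)$ and $\phi_-(S \otimes X) \geq \phi_-(X)$ simultaneously for all $X$ by induction on length --- available from local finiteness, and automatic in the finite-length hearts of the applications --- the inductive step using that $I \subseteq A$ is a \emph{proper} subobject except in the boundary case where $\rightdual{S} \otimes F \twoheadrightarrow A$ is surjective. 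That boundary case I would close by feeding the complementary bound for $F$ back in, together with the retraction $A \hookrightarrow \rightdual{S} \otimes S \otimes A$ arising because $\1$ is a direct summand of $\rightdual{S} \otimes S$ in the semisimple category $\cC$. Once both phase bounds are established, applying them to $S$ and to $\rightdual{S}$ forces $\phi_\pm(S \otimes A) = \phi_\pm(A)$, which for semistable $A$ gives exactly the core claim.
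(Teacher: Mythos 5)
Your reductions are all sound and agree with the paper's: the central charge identity $Z(C\otimes A)=\FPdim(C)\cdot Z(A)$, the tautological (ii)$\Rightarrow$(iii), the reduction to simple $C$ via semisimplicity, and the easy direction (the paper proves (iii)$\Rightarrow$(i) even more directly, by applying the exact functor $C\otimes-$ to a destabilising subobject of $A$, but your HN-filtration argument is equally valid once the core claim is granted). The genuine gap is in the core claim, where you candidly flag the circularity yourself and then propose to break it by a simultaneous induction on length. This does not close the gap, for two reasons. First, the induction is never carried out: the ``boundary case'' is resolved only by a gesture (``feeding the complementary bound for $F$ back in''), and it is exactly there that the mutual dependence of the two phase bounds resurfaces. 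Second, and decisively, the proposition carries no finite-length hypothesis: $\cA$ is an arbitrary abelian module category with an HN stability function (think of $\Coh(X)$, which is noetherian but not artinian), and in the paper's key application --- the deformation argument following \cref{lem:deformation} --- the proposition is applied to the hearts $\cP(\phi-\varepsilon,\phi-\varepsilon+1]$. Local finiteness only makes the thin quasi-abelian slices $\cP(\phi-\varepsilon,\phi+\varepsilon)$ finite length, not a full heart, so ``induction on length'' is simply unavailable in the generality the proposition must hold in.

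The idea your proposal is missing is that the right finiteness to exploit is not the length of objects of $\cA$ but the finiteness of $\Irr(\cC)$. The paper sets $r=\max\{\phi^+(S_j\otimes A): S_j\in\Irr(\cC)\}$ --- a maximum over a finite set --- and assumes $r>\phi(A)$ for contradiction. Choosing a simple $S$ with $\phi^+(S\otimes A)=r$ and letting $E$ be the top HN piece of $S\otimes A$, adjunction gives $\Hom_{\cA}(\leftdual{S}\otimes E,A)\neq 0$; since $A$ is semistable of phase strictly less than $r=\phi(\leftdual{S}\otimes E)$, the object $\leftdual{S}\otimes E$ must itself be unstable, and its top HN piece $E'$ has $\phi(E')>r$. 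But $E'\subset\leftdual{S}\otimes E\subset\leftdual{S}\otimes S\otimes A$, and since $\leftdual{S}\otimes S$ decomposes into simples, $\phi^+(\leftdual{S}\otimes S\otimes A)\leq r<\phi(E')$, forcing $\Hom_\cA(E',\leftdual{S}\otimes S\otimes A)=0$ --- contradicting that $E'$ is a nonzero subobject. Note that this is your double-adjunction manoeuvre in spirit, but the destabilising piece is bounded against the \emph{global} maximum $r$ over all simples rather than against $\phi^+(S\otimes I)$ for the image subobject $I\subseteq A$; that single change is what breaks the circularity you identified, with no induction and no length hypothesis needed.
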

\begin{proof}
Given any object $X \in \cA$, we have $\phi^+(X) \geq \phi(X) \geq \phi^-(X)$ by definition (see \cref{defn:abelianstability} for notation). 
Using the group homomorphism property of $Z$, the following are equivalent:
\begin{itemize}
\item $X$ is semistable;
\item $\phi^+(X) = \phi(X)$; and
\item $\phi(X)=\phi^-(X)$.
\end{itemize}
Moreover, the assumption that $Z$ is $\cC$-equivariant dictates that 
\begin{equation}\label{eqn:phasepreserved}
\phi \left(C \otimes X \right) = \phi(X)
\end{equation}
for any object $X \in \cA$ and any $C \in \cC$.
In particular, the phase agreement in the statement of the proposition will be immediate.

The direction \ref{prop:allstable} $\Rightarrow$ \ref{prop:somestable} is tautological, so we will only prove the other two.

Let us start with the easier direction: \ref{prop:somestable} $\Rightarrow$ \ref{prop:stable}.
Suppose $C \otimes A$ is semistable for some $C \neq 0 \in \cC$. Recall that the $\cC$ action is exact. Given an exact sequence in $\cA$,
\[
0 \ra U \ra A \ra V \ra 0,
\]
we can apply the exact functor $C \otimes -$ to obtain the following exact sequence:
\[
0 \ra C \otimes U \ra C\otimes A \ra C \otimes V \ra 0.
\]
The semistability of $C \otimes A$ tells us that
$
\phi(C \otimes U) \leq \phi(C \otimes A).
$
Using \eqref{eqn:phasepreserved} we get
$
\phi(U) \leq \phi(A)
$
as required.

Now let us prove \ref{prop:stable} $\Rightarrow$ \ref{prop:allstable}.
Suppose $A$ is a semistable object.
Let $\Irr(\cC) \coloneqq \{S_j\}_{j = 0}^n$ with $S_0 = \1$ the monoidal unit.
Since every non-zero object in $\cC$ is (up to isomorphism) a direct sum of $S_j$'s, it is sufficient to prove that $S_j \otimes A$ is semistable for all $j$.
Consider the number
\[
r= \max \{\phi^+(S_j \otimes A) \}_{j=0}^n,
\]
so that in particular $r \geq \phi^+(S_0 \otimes A) = \phi^+(A)= \phi(A)$.
If indeed $r = \phi(A)$, then \eqref{eqn:phasepreserved} gives us 
\[
\phi(S_j \otimes A) = \phi(A) = r \geq \phi^+(S_j \otimes A) \geq \phi(S_j \otimes A) \qquad \text{for all } j,
\]
hence all $S_j \otimes A$ are semistable of phase $\phi(A)$ as required.
So assume to the contrary that $r > \phi(A)$. Then for some $S\coloneqq S_k$ we have $r = \phi^+(S \otimes A) > \phi(S \otimes A)$.
In particular, $S \otimes A$ is not semistable. 
Let $E \neq 0$ be the HN semistable factor of $S \otimes A$ with largest phase $\phi(E) = r$, which comes with a non-zero inclusion $E \ra S\otimes A$.
With $\leftdual{S}$ denoting the (left) dual of $S$, the isomorphism  
\[ 
\Hom_{\cA}(E, S \otimes A) \cong \Hom_{\cA}(\leftdual{S}\otimes E, A),  
\]
shows that $\Hom_{\cA}(\leftdual{S}\otimes E, A) \neq 0$.
Since $A$ is semistable and 
\[
\phi(\leftdual{S}\otimes E) = \phi(E) = r > \phi(A),
\] 
it must be the case that $\leftdual{S}\otimes E$ is not semistable.
We then have the HN semistable factor $E'$ of $\leftdual{S} \otimes E$ with the highest phase, giving us
\begin{equation} \label{eqn:E'geqr}
\phi(E') = \phi^+(\leftdual{S} \otimes E) > \phi(\leftdual{S} \otimes E) = r.
\end{equation}
Using exactness of the $\cC$-action, we have found ourselves a chain of inclusions
\begin{equation}\label{eqn:chainofsubs}
E' \ra \leftdual{S} \otimes E \ra \leftdual{S} \otimes S \otimes A.
\end{equation}
With $\cC$ being semisimple, the object $\leftdual{S} \otimes S$ decomposes into a direct sum of simples $S_j$, so we still have $r \geq \phi^+((\leftdual{S} \otimes S) \otimes A)$.
Combined with \eqref{eqn:E'geqr}, we get the strict inequality
\[
\phi(E') > \phi^+( \leftdual{S} \otimes S \otimes A).
\]
Now $E'$ is semistable by construction, so we must have $\Hom_\cA(E', \leftdual{S} \otimes S \otimes A) = 0$.
But this contradicts the chain of inclusions in \eqref{eqn:chainofsubs}, whose composition is non-zero. 
Thus the initial assumption $r> \phi(A)$ must be false, which completes the proof.
\end{proof}
\begin{remark}\label{rem:falsetriangualtedanalogue}
It is important to note that the triangulated category analogue of \cref{prop:cCpreservesemistable} does not hold, namely property \ref{item:slicingequivariant} of a fusion-equivariant stability condition does not follow from property \ref{item:centralchargeequivariant}.
This can already be seen in the case of $\cC = \vec_G$, i.e.\ there are stability conditions whose central charge is $G$-invariant but has some $\cP(\phi)$ not closed under the action of $G$.
%\EH{The simplest example that I can come up with comes from braid group action on zigzag type categories: take type $A_3$ and start with a $\bbZ/2\bbZ$-invariant stability condition $\tau$ (namely type $B_2$). Now apply the square of the spherical twist $\sigma_1^2$ on $\tau$ -- the central charge doesn't change ($\sigma_1^2$ acts trivially on the Grothendieck group), but one can see that the slicings are no longer $\bbZ/2\bbZ$-invariant.}
\end{remark}

Using this we can show the following variant of \cref{prop:stabonab}.
\begin{theorem} \label{thm: stab function over C and stab cond respecting C}
Giving a $\cC$-equivariant stability condition on $\cD$ is equivalent to giving a heart of a bounded $t$-structure $\cH$ of $\cD$ such that $\cH$ is a module subcategory of $\cD$ over $\cC$ (i.e. $\cC\otimes \cH\subseteq \cH$), together with a $\cC$-equivariant stability function on $\cH$ that satisfies the Harder--Narasimhan property.
\end{theorem}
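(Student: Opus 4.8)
The plan is to bootstrap off Bridgeland's bijection in \cref{prop:stabonab}, which already identifies a (not-necessarily-equivariant) stability condition $(\cP,Z)$ on $\cD$ with the pair consisting of the standard heart $\cH \coloneqq \cP(0,1]$ and the stability function $Z$ on $K_0(\cH)$ (using $K_0(\cH)\cong K_0(\cD)$) satisfying the HN property. On top of this bijection I only need to match the two equivariance conditions of \cref{defn: C equivariant stab} with the two conditions in the statement, namely that $\cH$ is a $\cC$-module subcategory and that the stability function on $\cH$ is $\cC$-equivariant in the sense of \cref{defn: stab function over C}. The central-charge condition is the easy match: since the inclusion $\cH \hookrightarrow \cD$ is a map of $\cC$-module categories, the isomorphism $K_0(\cH)\cong K_0(\cD)$ is one of $K_0(\cC)$-modules, so $Z$ lies in $\Hom_{K_0(\cC)}(K_0(\cD),\C)$ if and only if its restriction to $K_0(\cH)$ is a $K_0(\cC)$-module homomorphism. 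The whole content therefore lies in matching condition (1) of \cref{defn: C equivariant stab} --- preservation of the slicing --- with the requirement that $\cH$ be a module subcategory.

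For the forward direction, suppose $(\cP,Z)$ is $\cC$-equivariant. Recalling that $\cH=\cP(0,1]$ is the extension-closure of $\bigcup_{\phi\in(0,1]}\cP(\phi)$ and that each functor $C\otimes-$ is exact, it preserves extension-closures; combined with $C\otimes \cP(\phi)\subseteq\cP(\phi)$ this gives $C\otimes\cH\subseteq\cH$, so $\cH$ is a $\cC$-module subcategory. The $\cC$-equivariance of the induced stability function on $\cH$ is then immediate from the central-charge condition, as noted above.

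For the reverse direction, suppose $\cH$ is a $\cC$-module subcategory carrying a $\cC$-equivariant stability function $Z$ with the HN property, and let $(\cP,Z)$ be the stability condition produced by \cref{prop:stabonab}. I must check condition (1), i.e. $C\otimes\cP(\phi)\subseteq\cP(\phi)$ for all $C\in\cC$ and all $\phi\in\R$. For $\phi\in(0,1]$ the subcategory $\cP(\phi)$ consists precisely of the $Z$-semistable objects of $\cH$ of phase $\phi$, and here the key input is \cref{prop:cCpreservesemistable}: it applies because $Z$ is a $\cC$-equivariant stability function with the HN property on the abelian module category $\cH$, and it tells us exactly that $C\otimes A$ is again semistable of the same phase $\phi$, hence lies in $\cP(\phi)$. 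To pass to arbitrary $\phi$, write $\phi=\phi'+n$ with $\phi'\in(0,1]$ and $n\in\Z$, so that $\cP(\phi)=\cP(\phi')[n]$; since the action is by exact (triangulated) endofunctors, $C\otimes-$ commutes with the shift, giving $C\otimes(\cP(\phi')[n])\cong(C\otimes\cP(\phi'))[n]\subseteq\cP(\phi')[n]=\cP(\phi)$. This establishes condition (1), and condition (2) holds as before, so $(\cP,Z)$ is $\cC$-equivariant.

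The substantive step --- preservation of the slicing under the $\cC$-action --- is handled entirely by \cref{prop:cCpreservesemistable}, so the only real obstacle is ensuring that its hypotheses are genuinely met: one must confirm that the $K_0(\cC)$-module structure on $K_0(\cH)$ coming from the module subcategory structure agrees, under $K_0(\cH)\cong K_0(\cD)$, with the one used to define $\cC$-equivariance of $Z$, and that \emph{semistable of phase $\phi$ in the heart $\cH$} coincides with \emph{membership in $\cP(\phi)$} for $\phi\in(0,1]$. Both are bookkeeping consequences of the construction in \cref{prop:stabonab} together with the exactness of the action, and once they are in place the equivalence follows formally.
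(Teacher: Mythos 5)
Your proposal is correct and follows essentially the same route as the paper's own (very terse) proof: the forward direction is the observation that the standard heart $\cP(0,1]$ inherits a $\cC$-module structure with a $\cC$-equivariant stability function, and the reverse direction combines Bridgeland's construction from \cref{prop:stabonab} with \cref{prop:cCpreservesemistable} to see that the resulting slicing is preserved by the action. Your extra bookkeeping (extension-closure under the exact functors $C\otimes-$, shift-compatibility to handle all $\phi\in\R$, and agreement of the $K_0(\cC)$-module structures under $K_0(\cH)\cong K_0(\cD)$) is exactly what the paper leaves implicit, and it is all verified correctly.
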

\begin{proof}
The proof for the implication $(\Rightarrow)$ follows from the discussion before \cref{defn: stab function over C}, whereas the proof for $(\Leftarrow)$ follows from the usual construction of a stability condition from a heart of a bounded $t$-structure with a stability function that satisfies the Harder--Narasimhan property together with \cref{prop:cCpreservesemistable}.
\end{proof}

\subsection{Example: \texorpdfstring{$\Fib$}{Fib} acting on the Fukaya category of the disc with 5 marked points.}
Let $\mathbb{D}_5$ denote the disc with 5 marked points on the boundary, and let $D\Fuk(\mathbb{D}_5)$ denote its Fukaya category, as defined in \cite{HKK_17,OPS_18}.
There is an equivalence
\[
\cD\mathbb{D}_5\coloneqq D\Fuk(\mathbb{D}_5) \cong D^b(\rep(Q)),
\]
where $Q$ is the $A_4$ quiver as in \cref{eg:modulecat}\ref{eg:A4} (see also \cite{QZ_Geometric_model} for types D and E).

The indecomposable objects in $\cD\mathbb{D}_5$ (up to shift) correspond to isotopy classes of curves on $\mathbb{D}_5$ with endpoints on the boundary. 
Note that the $\Fib$-action on $\rep(Q)$ as described in \cref{eg:modulecat}\ref{eg:A4} is exact -- as are all actions of fusion categories on abelian categories. 
As a result $\Fib$ acts on the triangulated category $\cD\mathbb{D}_5$. 

The main result in \cite[Theorem 5.3]{HKK_17} states that the moduli space of flat structures on $\mathbb{D}_5$ is biholomorphic onto its image in $\Stab(\cD\mathbb{D}_5)$.
We describe here a single $\Fib$-equivariant stability condition $\sigma$ on  $\cD\mathbb{D}_5$, defined by the $\Fib$-equivariant stability function $Z$ on the heart $\rep(Q) \subset \cD\mathbb{D}_5$ depicted in \cref{subfig:rootcentralcharge} (refer to \cref{fig:A4quiver} for the action of $\Pi$).
In this case, the image of the stable objects of $\rep(Q)$ under the central charge $Z$ is a copy of the root system associated to the Coxeter group $\mathbb{W}(I_2(5))$, which is the dihedral group (of order 10) of isometries of the pentagon.
This stability condition $\sigma$ (up to the action of $\C$) is also the solution to the Gepner equation $\tau(\sigma) = (-2/5)\cdot \sigma$ studied in \cite{KST_Matrix_fac_quivers, Qiu_Gepner}, where $\tau$ is the Auslander--Reiten translate.

With respect to the flat structure that corresponds to $Z$,  the 5 marked points on the boundary form the 5 vertices of the \emph{regular} pentagon with side lengths one, and under this correpondence the stable objects are the straight lines connecting the 5 marked points; see \cref{subfig:diskpentagon}.
Hence, we have 10 lines connecting the 5 marked points, and 10 corresponding stable objects. 
Notice that the action of $\Pi$ sends an outer edge of length one to the corresponding parallel inner edge of length equal to the golden ratio.

\begin{figure}
\centering
\begin{subfigure}{.45\textwidth}
  \centering
\begin{tikzpicture}[scale = 1.65]
    \coordinate (Origin)   at (0,0);
    \coordinate (XAxisMin) at (-2,0);
    \coordinate (XAxisMax) at (2,0);
    \coordinate (YAxisMin) at (0,-.5);
    \coordinate (YAxisMax) at (0,2);
    \coordinate (na1)      at (-1,0);

    \draw [thin, gray,-latex] (XAxisMin) -- (XAxisMax);% Draw x axis
    \draw [thin, gray,-latex] (YAxisMin) -- (YAxisMax);% Draw y axis
    \draw[gray, dashed] (1,0) arc (0:180:1);
    \draw[gray, dashed] (1.618,0) arc (0:180:1.618);
    	
	%brace
	\draw [decorate,decoration={brace,amplitude=10pt}]
		($(Origin) + (0,-0.1)$) -- (-1.618, -0.1) node [black,midway,yshift=-22] 
		{\footnotesize $\delta$};	
	
	% Short roots; anticlockwise order
	\draw[blue, thick, ->] (0,0) -- ++(36:1);
	\node[blue] (s2s1s2a1) at ($(Origin) + (30:1.15)$) 
		{\scriptsize $P_4$};
	
	\draw[blue, thick, ->] (0,0) -- ++(72:1);
	\node[blue] (s2s1a2) at ($(Origin) + (72:1.15)$) 
		{\scriptsize $\tau(S_1)$};
	
	\draw[blue, thick, ->] (0,0) -- ++(108:1);
	\node[blue] (s2a1) at ($(Origin) + (118:1.1)$) 
		{\scriptsize $P_1$};
	
	\draw[blue, thick, ->] (0,0) -- ++(144:1);
	\node[blue] (a2) at ($(Origin) + (154:1.1)$) 
		{\scriptsize $I_4$};
		
	\draw[blue, thick, ->] (0,0) -- ++(180:1);
	\node[blue] (a1) at ($(Origin) + (186:1.15)$) 
		{\scriptsize $S_1$};
	
	% Longer roots; anticlockwise order
	\draw[red, ->] (0,0) -- ++(36 :1.618);
	\node[red] (xa1) at ($(Origin) + (36:1.8)$) 
		{\scriptsize $P_2$};
	
	\draw[red, ->] (0,0) -- ++(72 :1.618);
	\node[red] (xa1) at ($(Origin) + (68:1.8)$) 
		{\scriptsize $\tau(S_3)$};
	
	\draw[red, ->] (0,0) -- ++(108:1.618);
	\node[red] (xa1) at ($(Origin) + (108:1.8)$) 
		{\scriptsize $P_3$};
	
	\draw[red, ->] (0,0) -- ++(144:1.618);
	\node[red] (xa2) at ($(Origin) + (144:1.8)$) 
		{\scriptsize $I_2$};
		
	\draw[red, ->] (0,0) -- ++(180  :1.618);
	\node[red] (xa1) at ($(Origin) + (-184:1.8)$) 
		{\scriptsize $S_3$};
    
    \pic [draw, ->, "$\frac{\pi}{5}$", angle eccentricity=1.7] {angle=xa2--Origin--na1};
  \end{tikzpicture}
  \caption{
  		The central charge $Z$.
  }
  \label{subfig:rootcentralcharge}
\end{subfigure}   
\begin{subfigure}{.45\textwidth}
  \centering
\begin{tikzpicture}
  \draw[dotted] (0,0) circle [radius=2.15];
  \foreach \i in {1,...,5}
    \fill (\i*360/5:2.1) coordinate (n\i) circle(2 pt)
      \ifnum \i=5 
      	(n1) edge node[fill=gray!30] {\tiny $S_1$} (n2)
      	(n1) edge node[fill=gray!30] {\tiny $P_2$} (n3) 
      	(n1) edge node[fill=gray!30] {\tiny $P_3$} (n4) 
      	(n1) edge node[fill=gray!30, scale=0.8] {\tiny $\tau(S_1)$} (n5) 
      	(n2) edge node[fill=gray!30] {\tiny $P_1$} (n3) 
      	(n2) edge node[fill=gray!30, scale=0.8] {\tiny $\tau(S_3)$} (n4) 
      	(n2) edge node[fill=gray!30] {\tiny $I_2$} (n5) 
      	(n3) edge node[fill=gray!30] {\tiny $I_4$} (n4) 
      	(n3) edge node[fill=gray!30] {\tiny $S_3$} (n5) 
      	(n4) edge node[fill=gray!30] {\tiny $P_4$} (n5)
      	\fi;
\end{tikzpicture}
  \caption{
  		The flat disk $\mathbb{D}_5$.
  }
  \label{subfig:diskpentagon}
\end{subfigure}%
 \caption{
  		The images of the central charge $Z$ for all of the indecomposable objects in $\rep(Q)$ are shown on the left, where the blue (shorter) lines have length 1 and the red (longer) lines have length $\delta=$ golden ratio. The flat structure on the disk $\mathbb{D}_5$ on the right is induced by the stability condition defined by $Z$. 
  }
  \label{fig:A4rootdisk}
\end{figure}

\subsection{Submanifold property}
Recall that $\Stab_{\cC}(\cD) \subseteq \Stab(\cD)$ denotes the subset of all $\cC$-equivariant stability conditions.
By definition, the forgetful map $\cZ$ above maps $\sigma \in \Stab_{\cC}(\cD)$ into the $\C$-linear subspace $\Hom_{K_0(\cC)}(K_0(\cD), \C) \subseteq \Hom_{\Z}(K_0(\cD), \C)$ of $K_0(\cC)$-module morphisms.
To show that $\Stab_{\cC}(\cD)$ is a complex submanifold, it will be sufficient to show that any local deformation of $Z = \cZ(\sigma)$ within the linear subspace $\Hom_{K_0(\cC)}(K_0(\cD),\C)$ lifts to a stability condition $\tau$ that is still $\cC$-equivariant.
To do so, we first recall the following deformation result by Bridgeland.
\begin{lemma}[\protect{\cite[Theorem 7.1]{bridgeland_2007}}]\label{lem:deformation}
Let $\sigma = (\cP, Z)$ be a stability condition. 
There exists $\varepsilon_0 >0$ such that for all $0<\varepsilon< \varepsilon_0$, if $W \in \Hom_{\Z}(K_0(\cD),\C)$ satisfies
\[
|W(E) - Z(E)|<\sin(\varepsilon \pi)|Z(E)|
\]
for all $\sigma$-semistable object $E$, then there exists a slicing $\cQ$ so that $\tau = (\cQ, W)$ forms a stability condition on $\cD$ with $d(\sigma,\tau) < \varepsilon$, where $d$ is the distance defined by
\[
d(\sigma,\tau)\coloneqq \sup_{0\neq E \in \cD} \left\{
	|\phi^-_\cP(E) - \phi^-_\cQ(E)|, |\phi^+_\cP(E) - \phi^+_\cQ(E)|, |\log m_\sigma(E) - \log m_\tau(E)|
	\right\}.
\]
Moreover, the slicing $\cQ$ is constructed by defining each $\cQ(\psi)$ to be the full additive subcategory of $\cD$ containing the zero object and all objects $E$ that are $W$-semistable with phase $\psi$ in any thin subcategory $\cP(a,b)$ such that $a+\varepsilon\leq \psi \leq b-\varepsilon$.
\end{lemma}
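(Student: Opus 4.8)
The plan is to verify that the slicing $\cQ$ described in the statement is well defined, satisfies the slicing axioms with central charge $W$, and lies within distance $\varepsilon$ of $\sigma$; the construction of $\cQ$ is already handed to us, so the work is all in the verification. First I would use local finiteness: for $\varepsilon$ small, each window $\cA_\phi \coloneqq \cP(\phi - \varepsilon, \phi + \varepsilon)$ is a quasi-abelian category of finite length, and its ``simple'' objects are precisely the $\sigma$-semistable objects of phase in $(\phi - \varepsilon, \phi + \varepsilon)$. Thus $K_0(\cA_\phi)$ is generated by $\sigma$-semistable classes, and it is enough to control $W$ on those.

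The geometric input is the hypothesis $|W(E) - Z(E)| < \sin(\varepsilon\pi)|Z(E)|$. For a $\sigma$-semistable $E$ of phase $\psi$, the point $Z(E)$ sits on the ray of argument $\pi\psi$, and the disc of radius $\sin(\varepsilon\pi)|Z(E)|$ about it subtends half-angle $\varepsilon\pi$ at the origin; hence $W(E) \neq 0$ and its argument lies within $\varepsilon\pi$ of $\pi\psi$. Consequently $W$ carries every nonzero object of $\cA_\phi$ into the open sector of arguments in $(\phi - 2\varepsilon, \phi + 2\varepsilon)\pi$, a window of width below $1$, so $W$ restricts to a stability function on the quasi-abelian category $\cA_\phi$. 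Finite length then supplies $W$-Harder--Narasimhan filtrations inside $\cA_\phi$, which is exactly what is needed for the prescription ``$\cQ(\phi) = W$-semistables of $W$-phase $\phi$ in $\cA_\phi$'' to make sense.

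I would then check the slicing axioms for $\cQ$. Compatibility with shifts, $\cQ(\phi + 1) = \cQ(\phi)[1]$, is immediate from $\cA_{\phi+1} = \cA_\phi[1]$. The vanishing $\Hom(\cQ(\psi_1), \cQ(\psi_2)) = 0$ for $\psi_1 > \psi_2$ follows by comparing $W$-phases, splitting into the case where $\psi_1,\psi_2$ lie in a common window (where it is the quasi-abelian statement that semistables do not map down in $W$-phase) and the case where they are far apart (where the original $\sigma$-phases already force the vanishing). For the Harder--Narasimhan property I would take any $X$, apply its $\sigma$-HN filtration to land in individual $\cP(\psi_i) \subset \cA_{\psi_i}$, refine each $\sigma$-semistable piece by its $W$-HN filtration inside $\cA_{\psi_i}$, and splice the two filtrations; along the way one checks that the resulting phases are strictly decreasing, which again reduces to the angle estimate above. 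Finally, since every semistable object moves in phase by less than $\varepsilon$ and in mass by a bounded multiplicative factor, the definition of $d$ yields $d(\sigma, \varsigma) < \varepsilon$ directly.

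The step I expect to be the main obstacle is the quasi-abelian bookkeeping: confirming that $W$ is a genuine stability function on each finite-length $\cA_\phi$ so that $W$-phases and $W$-HN filtrations are legitimate there, and that the local refinements coming from overlapping windows $\cA_\phi$ glue into one globally consistent slicing, so that membership in $\cQ(\phi)$ does not depend on the auxiliary window used to test it. This is precisely where local finiteness and the sharp constant $\sin(\varepsilon\pi)$ earn their keep, the latter guaranteeing that all perturbed phases stay inside a window of width below $1$ so that the quasi-abelian machinery applies.
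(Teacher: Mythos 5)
The paper itself gives no argument for this lemma---it is quoted verbatim from Bridgeland's deformation theorem \cite[Theorem 7.1]{bridgeland_2007}---so the relevant comparison is with Bridgeland's proof. Your outline follows the same strategy as his: the bound $|W(E)-Z(E)|<\sin(\varepsilon\pi)|Z(E)|$ forces $\arg W(E)$ to lie within $\varepsilon\pi$ of $\arg Z(E)$ for $\sigma$-semistable $E$, hence $W$ is a (skewed) stability function on each thin window $\cP(\phi-\varepsilon,\phi+\varepsilon)$, finite length supplies $W$-HN filtrations there, and one then verifies the slicing axioms and the distance bound. Two small corrections on the way: the simple objects of $\cP(\phi-\varepsilon,\phi+\varepsilon)$ are \emph{not} the $\sigma$-semistables (a semistable object typically has strict subobjects of the same phase); what you actually need, and what is true, is that every object of the window has finitely many $\sigma$-HN factors with phases in the window, so $W$ sends nonzero objects into a sector of angular width $4\varepsilon\pi<\pi$. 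Also, finite length of the window is part of what ``locally finite'' must be arranged to give at this scale; Bridgeland's $\varepsilon_0$ accounts for this.

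The genuine gap is in your Harder--Narasimhan step. Splicing the $\sigma$-HN filtration of $X$ with the local $W$-HN refinements of its factors $A_i\in\cP(\psi_i)$ fails whenever $0<\psi_i-\psi_{i+1}<2\varepsilon$: the $W$-phases of the refinement of $A_i$ lie in $(\psi_i-\varepsilon,\psi_i+\varepsilon)$ and those of $A_{i+1}$ in $(\psi_{i+1}-\varepsilon,\psi_{i+1}+\varepsilon)$, and these intervals overlap, so the concatenated filtration can have \emph{increasing} phases at the junction; no angle estimate rules this out, and the pieces were in any case only certified $W$-semistable relative to their own window. The missing ingredients, which are the real content of Bridgeland's Section 7, are: (a) a compatibility lemma showing that for $E\in\cP(I)$ with $I$ thin, $W$-semistability of $E$ in $\cP(I)$ is independent of the thin interval $I$ used to test it (you correctly flagged this; it also underpins your Hom-vanishing argument in the overlapping case); and (b) instead of splicing a whole filtration at once, one constructs for each $t\in\R$ a single truncation triangle $E_{>t}\ra E \ra E_{\leq t}$ with the two pieces built from $\cQ$-semistables of phase $>t$ and $\leq t$ respectively: the $\sigma$-HN factors of $E$ with phase above $t+\varepsilon$ or below $t-\varepsilon$ are sorted unambiguously, and only the factors with $\sigma$-phase in $[t-\varepsilon,t+\varepsilon]$ require a $W$-HN filtration, which takes place inside a \emph{single} thin window and is then cut at $W$-phase $t$. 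Assembling these one-cut-at-a-time truncations as $t$ varies (the function $t\mapsto E_{>t}$ changes at only finitely many values) yields the HN filtration for $\cQ$ and avoids the interleaving problem entirely. Without (a) and (b), or an equivalent merging argument carried out inside a common thin window, your proposed verification of the HN axiom does not go through.
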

We shall strengthen the lemma in the case where the stability condition $\sigma$ is $\cC$-equivariant and $W$ is a $K_0(\cC)$-module homomorphism.
Before we do so, we remind the reader of some of the salient properties of $\cP(a,b)$ and quasi-abelian categories that we require in the proof, which can all be found in \cite[Section 4]{bridgeland_2007}. 
With $\varepsilon > 0$ fixed, recall that $\cP(a,b)$ is said to be \emph{thin} if $0 < b - a < 1 - 2\varepsilon$.
\begin{itemize}
\item Suppose $\cP(a,b)$ is thin. By \cite[Lemma 4.3]{bridgeland_2007}, $\cP(a,b)$ is quasi-abelian and strict exact sequences $0 \ra X \xra{f} Y \xra{g} Z \ra 0$ in $\cP(a,b)$ are in one-to-one correspondence with exact triangles $X \xra{f} Y \xra{g} Z \xra{h} X[1]$ in $\cD$ with $X,Y$ and $Z$ all in $\cP(a,b)$. In this case, the morphism $f: X \ra Y$ is a strict monomorphism in $\cP(a,b)$.
\item If $f: X \ra Y$ is a strict monomorphism and $X \neq 0$, then $f \neq 0$.
\item Strict monomorphisms are closed under compositions.
\end{itemize}
\begin{lemma}\label{lem: openness of fusion-equivariant stability}
Suppose further that $\sigma = (\cP,Z)$ in \cref{lem:deformation} is $\cC$-equivariant. 
Then for any $W$ which is moreover in $\Hom_{K_0(\cC)}(K_0(\cD), \C)$, the lifted stability condition $\tau=(\cQ, W)$ is also $\cC$-equivariant.
\end{lemma}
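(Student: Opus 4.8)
The plan is to show that the slicing $\cQ$ produced by \cref{lem:deformation} is itself preserved by the $\cC$-action, so that together with the hypothesis $W \in \Hom_{K_0(\cC)}(K_0(\cD),\C)$ the pair $\varsigma = (\cQ, W)$ satisfies both conditions of \cref{defn: C equivariant stab}. Condition (ii) is free: it is exactly the assumption on $W$. So the entire content is condition (i), namely that $C \otimes \cQ(\phi) \subseteq \cQ(\phi)$ for every $C \in \cC$ and every $\phi \in \R$. The key is to exploit the \emph{explicit} description of $\cQ(\phi)$ given at the end of \cref{lem:deformation}: $\cQ(\phi)$ consists of the zero object together with all $E$ that are $W$-semistable of phase $\phi$ \emph{inside the quasi-abelian category} $\cP(\phi-\varepsilon,\phi+\varepsilon)$.

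First I would check that the $\cC$-action preserves each quasi-abelian slice $\cP(\phi-\varepsilon,\phi+\varepsilon)$. Since $\sigma$ is $\cC$-equivariant we have $C \otimes \cP(\psi) \subseteq \cP(\psi)$ for all $\psi$; because the action is exact and additive, tensoring by $C$ sends an object all of whose HN-pieces lie in phases in $(\phi-\varepsilon,\phi+\varepsilon)$ to another such object, so $C \otimes \cP(\phi-\varepsilon,\phi+\varepsilon) \subseteq \cP(\phi-\varepsilon,\phi+\varepsilon)$. Thus $C \otimes -$ restricts to an exact endofunctor of this quasi-abelian category, making it an abelian-like (quasi-abelian) module category over $\cC$. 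Second, I would observe that $W$ restricted to $K_0$ of this slice is the central charge of the induced stability \emph{function} on $\cP(\phi-\varepsilon,\phi+\varepsilon)$, and that it is $K_0(\cC)$-equivariant because $W$ is globally a $K_0(\cC)$-module map. This is precisely the setup of \cref{prop:cCpreservesemistable}.

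The main step is then to apply \cref{prop:cCpreservesemistable} \emph{within} the quasi-abelian category $\cP(\phi-\varepsilon,\phi+\varepsilon)$: an object $E$ of this category is $W$-semistable of phase $\phi$ if and only if $C \otimes E$ is $W$-semistable of phase $\phi$ for all nonzero $C$. Hence if $E \in \cQ(\phi)$ then $C \otimes E$ is again a $W$-semistable object of phase $\phi$ living in $\cP(\phi-\varepsilon,\phi+\varepsilon)$, i.e.\ $C \otimes E \in \cQ(\phi)$, which is exactly condition (i) for $\varsigma$.

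I expect the main obstacle to be a technical, not conceptual, one: \cref{prop:cCpreservesemistable} is stated and proved for an \emph{abelian} module category, whereas $\cP(\phi-\varepsilon,\phi+\varepsilon)$ is only quasi-abelian. One must therefore either verify that the proof of \cref{prop:cCpreservesemistable} goes through verbatim in the quasi-abelian (finite-length) setting — the only ingredients used are exactness of $C \otimes -$, the HN property, the phase-preservation identity \eqref{eqn:phasepreserved}, semisimplicity of $\cC$, and the duality adjunction $\Hom(E, S \otimes A) \cong \Hom(\leftdual{S} \otimes E, A)$, all of which remain available here — or else reduce to the abelian case by passing to the heart $\cP(\phi) \subseteq \cP(\phi-\varepsilon,\phi+\varepsilon)$ of the induced $t$-structure on this slice. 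Either way the verification is routine once one notes that local finiteness guarantees the HN property inside the slice. With that in hand, the lemma follows.
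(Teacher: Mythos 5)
Your proposal is correct in outline and follows the same strategy as the paper: condition (ii) of \cref{defn: C equivariant stab} is exactly the hypothesis on $W$, and condition (i) is extracted from the explicit description of $\cQ(\phi)$ in \cref{lem:deformation} together with \cref{prop:cCpreservesemistable}. You have also put your finger on the one genuine technical point, namely that \cref{prop:cCpreservesemistable} is stated for \emph{abelian} module categories while $\cP(\phi-\varepsilon,\phi+\varepsilon)$ is only quasi-abelian. Where you and the paper diverge is in how this is resolved. The paper does not re-prove the proposition in the quasi-abelian setting; it \emph{enlarges}: the $W$-semistables of phase $\phi$ in the thin slice are identified with the $W$-semistables of phase $\phi$ in the heart $\cP(I)$ for the length-one interval $I = (\phi-\varepsilon,\, \phi-\varepsilon+1]$ containing $(\phi-\varepsilon,\phi+\varepsilon)$. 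Since $\sigma$ is $\cC$-equivariant, $\cP(I)$ is an abelian module category over $\cC$, the restriction of $W$ to it is a $\cC$-equivariant stability function, and \cref{prop:cCpreservesemistable} applies verbatim, giving $C\otimes E \in \cQ(\phi)$ in one stroke. Your route (a) --- rerunning the proof of \cref{prop:cCpreservesemistable} inside the finite-length quasi-abelian slice --- should indeed go through, since the ingredients you list (exactness, HN property, phase preservation, semisimplicity, the duality adjunction) survive there, provided one is careful that ``subobject'' means strict subobject; but it is strictly more work than the paper's two-line enlargement.

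One caution on your fallback (b): as written it has the containment backwards. The category $\cP(\phi)$ sits \emph{inside} the slice $\cP(\phi-\varepsilon,\phi+\varepsilon)$, and $W$-semistability in the slice cannot be tested in the smaller $\cP(\phi)$ --- an object of $\cQ(\phi)$ need not lie in $\cP(\phi)$ at all, and a $W$-destabilizing subobject need not be $\sigma$-semistable of phase exactly $\phi$. The correct reduction embeds the slice into the \emph{larger} abelian heart $\cP(I)$, which is precisely the paper's move; with that substitution your argument coincides with the paper's proof.
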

\begin{proof}
By assumption, $W$ is in $\Hom_{K_0(\cC)}(K_0(\cD), \C)$, so it is sufficient to show that $A \in \cQ(\psi)$ implies that $C \otimes A \in \cQ(\psi)$ for any $C \in \cC$.

We claim that a proof along the same line as the proof of \ref{prop:stable} $\Rightarrow$ \ref{prop:allstable} in \cref{prop:cCpreservesemistable} almost works.
Namely, we work in a triangulated category $\cD$ instead of an abelian category $\cA$, and we replace ``semistable'' with ``$\tau$-semistable'' so that all HN filtrations are taken with respect to the slicing $\cQ$.
Note in particular that any inclusion that appeared in the proof is now merely a map in $D$.
One can check that all of the arguments go through until we reach the final contradiction required. Indeed, the composition in \eqref{eqn:chainofsubs} could very well be zero in this setting\footnote{
As pointed out in \cref{rem:falsetriangualtedanalogue}, the triangulated analogue of \cref{prop:cCpreservesemistable} is false, so one can not expect the same proof to work for triangulated categories without extra assumptions.
} 
(previously this was a composition of inclusions in an abelian category, which must be non-zero as $E'$ is non-zero).
Nonetheless, the stability condition $\tau = (\cQ, W)$ is (by assumption) within $\varepsilon$-distance from a $\cC$-equivariant stability condition $\sigma = (\cP, Z)$.
We will leverage this assumption and use it to show that the composition in \eqref{eqn:chainofsubs} is a composition of strict monomorphisms in the thin (this property holds since $\varepsilon_0$ can be chosen to be small), hence quasi-abelian, subcategory 
\[
\cP(a,b) \coloneqq \cP(\psi-3\varepsilon, \psi+5\varepsilon),
\]
%note: need $\psi+5$ for the next case iteration, i.e. $E' \ra \leftdual{S} \otimes E$ is also a strict monomorphism in the SAME quasi-abelian category
which is therefore non-zero since $E'$ is non-zero.
The rest of the proof is devoted to proving this statement.

The notations used here will be the same as those in \cref{prop:cCpreservesemistable}.
Throughout the proof, we will repeatedly use the fact that
\[
X \in \cP(x,y) \implies X \in \cQ(x-\varepsilon, y+\varepsilon) 
\]
and similarly
\[
X \in \cQ(x,y) \implies X \in \cP(x-\varepsilon, y+\varepsilon),
\]
which are both consequences of $d(\sigma,\tau) < \varepsilon$.
Since $A \in \cQ(\psi)$, $A$ lives in the quasi-abelian subcategory $\cP(\psi-\varepsilon, \psi+\varepsilon)$.
The assumption that $(\cP,Z)$ is $\cC$-equivariant then guarantees that $S \otimes A$ is also in $\cP(\psi - \varepsilon, \psi + \varepsilon)$.
Consequently, $S \otimes A \in \cQ(\psi - 2\varepsilon, \psi + 2\varepsilon)$.
Let $E$ be the $\tau$-HN semistable factor of $S\otimes A$ of highest phase and $E \rightarrow S\otimes A$ the corresponding morphism. Put $F = \Cone(E\rightarrow S\otimes A)$ and consider the exact triangle:
\begin{equation} \label{eqn:extrislicing}
E \ra S \otimes A \ra F \ra \ .
\end{equation}
By construction, the set of HN semistable factors of $F$ is obtained from that of $S\otimes A$ by removing $E$. Therefore we have $F \in \cQ(\psi-2\varepsilon, \psi+2\varepsilon)$, which implies that $F \in \cP(\psi-3\varepsilon, \psi+3\varepsilon) \subset \cP(a,b)$.
Moreover, we have $E \in \cQ(r)$ with $\psi < r < \psi + 2\varepsilon$, and so $E \in \cP(\psi - \varepsilon, \psi + 3\varepsilon) \subset \cP(a,b)$.
In particular, all three objects in \eqref{eqn:extrislicing} are in $\cP(a,b)$, so $E \ra S \otimes A$ is a strict monomorphism in $\cP(a,b)$.
Applying the functor $\leftdual{S} \otimes - $ to \eqref{eqn:extrislicing}, we get an exact triangle with all three objects in $\cP(a,b)$ ($\sigma$ is $\cC$-equivariant), hence $\leftdual{S} \otimes E \ra \leftdual{S} \otimes S \otimes A$ is also a strict monomorphism in $\cP(a,b)$.

The proof that $E' \ra \leftdual{S} \otimes E$ is a strict monomorphism follows similarly, starting with the fact that $\leftdual{S}\otimes E \in \cP(\psi - \varepsilon, \psi + 3\varepsilon)$.
%So $E' \in \cP(\psi - \varepsilon, \psi + 5\varepsilon)$ and the cone $F' \in \cP(\psi - 3\varepsilon, \psi + 5\varepsilon)$.
\end{proof}

\begin{theorem}\label{thm:equivariantissubmfld}
The space of $\cC$-equivariant stability conditions, $\Stab_\cC(\cD)$ is a complex submanifold of $\Stab(\cD)$ with the restriction of the forgetful map $\cZ$ (from \cref{thm:stabmfld})
\begin{align*}
\cZ\colon \Stab_\cC(\cD) &\ra \Hom_{K_0(\cC)}(K_0(\cD),\C) \subseteq  \Hom_{\Z}(K_0(\cD),\C) \\
(\cP, Z) &\mapsto Z
\end{align*} 
providing the local homeomorphism.
If moreover $\cD$ comes with $v\colon K_0(\cD) \ra \Lambda$, so that all central charges are assumed to factor via $v$, then $\Stab_{\cC}(\cD)$ is an $m$-dimensional submanifold with $m \leq n = \rank(\Lambda)$.
\end{theorem}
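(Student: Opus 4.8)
The plan is to leverage Bridgeland's local homeomorphism (\cref{thm:stabmfld}) to produce, around every $\cC$-equivariant stability condition, an ambient chart of $\Stab(\cD)$ in which $\Stab_\cC(\cD)$ is cut out precisely by the $\C$-linear subspace $L \coloneqq \Hom_{K_0(\cC)}(K_0(\cD),\C)$. Concretely, I would fix $\sigma = (\cP, Z) \in \Stab_\cC(\cD)$ and, using \cref{thm:stabmfld}, choose an open neighbourhood $U \ni \sigma$ on which $\cZ|_U \colon U \to V$ is a homeomorphism onto an open set $V \subseteq \Hom_{\Z}(K_0(\cD),\C)$. Shrinking $U$ if necessary, I may assume every $W \in V$ satisfies the deformation estimate of \cref{lem:deformation} relative to $\sigma$; this is exactly the estimate underlying the local homeomorphism, so no new input is needed here.

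The heart of the argument is then the identification $\cZ\bigl(U \cap \Stab_\cC(\cD)\bigr) = V \cap L$. The inclusion $\subseteq$ is immediate from \cref{defn: C equivariant stab}, since the central charge of any $\cC$-equivariant stability condition is by definition a $K_0(\cC)$-module homomorphism, hence lands in $L$. For the reverse inclusion, I take $W \in V \cap L$: \cref{lem:deformation} produces a lift $\varsigma = (\cQ, W)$ with $d(\sigma, \varsigma) < \varepsilon$, and because $\sigma$ is $\cC$-equivariant and $W \in L$, the preceding lemma (the strengthened form of \cref{lem:deformation}, whose engine is \cref{prop:cCpreservesemistable}) guarantees that $\varsigma$ is again $\cC$-equivariant. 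Since $\cZ|_U$ is injective and $\varsigma$ lies in $U$, this $\varsigma$ is forced to equal $(\cZ|_U)^{-1}(W)$, so $(\cZ|_U)^{-1}(W) \in \Stab_\cC(\cD)$ and $\supseteq$ follows.

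With this identification, $\cZ|_U$ is a submanifold chart: it restricts to a homeomorphism from $U \cap \Stab_\cC(\cD)$ onto the open subset $V \cap L$ of the complex-linear subspace $L$. Since such a chart exists at every point of $\Stab_\cC(\cD)$ and the transition maps are those of the holomorphic atlas on $\Stab(\cD)$, the subset is a closed-under-charts complex submanifold, and $\cZ$ restricts to a local homeomorphism onto $L$, as asserted. For the dimension count, when $\cD$ carries $v \colon K_0(\cD) \to \Lambda$ all central charges factor through $v$, so $\Hom_{\Z}(K_0(\cD),\C) \cong \C^n$ with $n = \rk \Lambda$; the equivariant locus $L$ is a $\C$-linear subspace of this, hence of some complex dimension $m \leq n$, making $\Stab_\cC(\cD)$ an $m$-dimensional complex submanifold.

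I expect the main obstacle to be the reverse inclusion, and in particular the step identifying the perturbative lift $\varsigma$ supplied by \cref{lem:deformation} with the chart-inverse $(\cZ|_U)^{-1}(W)$. This needs both that $\varsigma$ genuinely lands inside the chart domain $U$ (controlled by $d(\sigma,\varsigma) < \varepsilon$) and that the deformation estimate can be arranged to hold \emph{uniformly} over all $\sigma$-semistable objects for every $W$ in a fixed neighbourhood $V$ of $Z$ — the point where local finiteness of $\sigma$ is implicitly required to bound $|Z(E)|$ from below relative to the class of $E$. Everything else is formal packaging, since the preceding lemma has already done the substantive work of showing that $\cC$-equivariance is preserved under deformation within $L$.
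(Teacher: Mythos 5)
Your proposal is correct and follows essentially the same route as the paper, whose proof of \cref{thm:equivariantissubmfld} is a one-line deduction from \cref{thm:stabmfld} together with the strengthened deformation lemma (the one preceding the theorem, powered by \cref{prop:cCpreservesemistable}). What you have written simply unpacks that deduction---including the correct identification of the deformed lift $\varsigma$ with the chart-inverse $(\cZ|_U)^{-1}(W)$ via $d(\sigma,\varsigma)<\varepsilon$ and injectivity of $\cZ|_U$, which is exactly the implicit content of the paper's appeal to Bridgeland's local homeomorphism.
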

\begin{proof}
This is a direct consequence of \cref{lem: openness of fusion-equivariant stability} together with \cref{thm:stabmfld}.
\end{proof}
\begin{remark}
\cref{thm:equivariantissubmfld} was obtained independently in \cite{QZ_fusion-stable}.
\end{remark}

\subsection{Closed property}\label{section: closed property}
In this subsection, we show that $\Stab_{\cC}(\cD)$ is moreover a closed subset of $\Stab(\cD)$.
The results here are highly motivated by -- and will make use of -- the constructions and results in \cite{MMS_09}. We shall now recall the relevant ones.

Let $\Phi\colon \cD \ra \cD'$ be an exact functor between triangulated categories satisfying the following property, called  (Ind) in \cite{MMS_09}:
\begin{equation}\tag{P1} \label{assump:nonzero}
\Hom_{\cD'}(\Phi(X), \Phi(Y)) = 0 \implies \Hom_{\cD}(X,Y) = 0 \quad \text{ for all } X, Y \in \cD.
\end{equation}
Given $\sigma' = (\cP', Z') \in \Stab(\cD')$, we define for each $\phi \in \R$ the following additive subcategory of $\cD$:
\[
\Phi^{-1}\cP'(\phi) \coloneqq \{ E \in \cD \mid \Phi(E) \in \cP'(\phi) \}
\]
and a group homomorphism $\Phi^{-1}Z'\colon K_0(\cD) \ra \C$ defined by
\[
[X] \mapsto Z'[\Phi(X)].
\]
The pair $\Phi^{-1}\sigma'\coloneqq (\Phi^{-1}\cP', \Phi^{-1}Z')$ might not always define a stability condition on $\cD$ -- one has to check if the Harder--Narasimhan property is satisfied.
Nonetheless, if we restrict to those that do, we have the following result.
\begin{lemma}[\protect{\cite[Lemma 2.8, 2.9]{MMS_09}}] \label{lem:domclosedcts}
Suppose $\Phi$ satisfies property \eqref{assump:nonzero}.
Then the following subset of $\Stab(\cD')$
\[
\Dom(\Phi^{-1}) \coloneqq \{ \sigma' \in \Stab(\cD') \mid \Phi^{-1}\sigma' \in \Stab(\cD) \}
\]
is closed.
Moreover, the map from this subset
\[
\Phi^{-1}\colon \Dom(\Phi^{-1}) \ra \Stab(\cD)
\]
is continuous.
\end{lemma}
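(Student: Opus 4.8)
The plan is to reduce everything to a single comparison between the invariants of an object $E \in \cD$ under $\Phi^{-1}\sigma'$ and those of $\Phi(E) \in \cD'$ under $\sigma'$. First I would extract two consequences of \eqref{assump:nonzero}. Taking $X = Y = E$ shows that $\Phi(E) \neq 0$ whenever $E \neq 0$, since $\Hom_{\cD'}(\Phi(E),\Phi(E)) = 0$ would force $\id_E = 0$. And if $\Phi(A) \in \cP'(\psi)$, $\Phi(B) \in \cP'(\chi)$ with $\psi > \chi$, then the slicing axiom in $\cD'$ gives $\Hom_{\cD'}(\Phi(A),\Phi(B)) = 0$, so \eqref{assump:nonzero} yields $\Hom_{\cD}(A,B) = 0$; this verifies the vanishing axiom for the candidate slicing $\Phi^{-1}\cP'$ (the shift axiom being immediate), so that on $\Dom(\Phi^{-1})$ the only remaining content of being a stability condition is the Harder--Narasimhan property. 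Next, for $\sigma' \in \Dom(\Phi^{-1})$, applying the exact functor $\Phi$ to the HN filtration of $E$ produces a filtration of $\Phi(E)$ whose graded pieces $\Phi(E_i)$ are nonzero and lie in $\cP'(\phi_i)$ with strictly decreasing phases; by uniqueness of HN filtrations this \emph{is} the HN filtration of $\Phi(E)$. Hence, using $(\Phi^{-1}Z')(E_i) = Z'(\Phi(E_i))$ on each factor,
\[
\phi^{\pm}_{\Phi^{-1}\sigma'}(E) = \phi^{\pm}_{\sigma'}(\Phi(E)), \qquad m_{\Phi^{-1}\sigma'}(E) = m_{\sigma'}(\Phi(E)).
\]

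Continuity is then formal. For $\sigma', \varsigma' \in \Dom(\Phi^{-1})$ the identities above show that every quantity entering $d(\Phi^{-1}\sigma', \Phi^{-1}\varsigma')$ coincides with the same quantity computed for $\Phi(E)$ under $\sigma', \varsigma'$. Since $E \mapsto \Phi(E)$ carries nonzero objects of $\cD$ to nonzero objects of $\cD'$, the supremum defining $d(\Phi^{-1}\sigma', \Phi^{-1}\varsigma')$ ranges over a subset of the objects defining $d(\sigma',\varsigma')$, giving $d(\Phi^{-1}\sigma', \Phi^{-1}\varsigma') \le d(\sigma',\varsigma')$. Thus $\Phi^{-1}$ is non-expanding, hence continuous, once we recall (Bridgeland) that the manifold topology on $\Stab$ agrees with the topology induced by the metric $d$ of \cref{lem:deformation}.

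For closedness I would argue by sequences, since $\Stab(\cD')$ is metrizable. Let $\sigma'_n \to \sigma'$ with $\sigma'_n \in \Dom(\Phi^{-1})$, and set $\tau_n := \Phi^{-1}\sigma'_n \in \Stab(\cD)$. The non-expansion bound makes $(\tau_n)$ Cauchy, and its central charges $\cZ(\tau_n) = \Phi^{-1}Z'_n$ converge to $W := \Phi^{-1}Z'$ in $\Hom_{\Z}(K_0(\cD),\C)$. I would then invoke local completeness: for $N$ large all $\tau_n$ lie in a small $d$-ball around $\tau_N$, and applying the deformation result \cref{lem:deformation} to $\sigma := \tau_N$ with target central charge $W$ produces a stability condition $\tau = (\cQ, W)$ with $d(\tau_N,\tau)$ small, so $\tau_n \to \tau$. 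Finally I would identify $\tau$ with $\Phi^{-1}\sigma'$: since $d(\tau_n,\tau) \to 0$ and $d(\sigma'_n,\sigma') \to 0$, continuity of phases gives $\phi^{\pm}_{\tau}(E) = \lim_n \phi^{\pm}_{\tau_n}(E) = \lim_n \phi^{\pm}_{\sigma'_n}(\Phi(E)) = \phi^{\pm}_{\sigma'}(\Phi(E))$. Therefore $E$ is $\tau$-semistable of phase $\phi$ precisely when $\Phi(E) \in \cP'(\phi)$; that is, the slicing of $\tau$ is exactly $\Phi^{-1}\cP'$, and its central charge is $W = \Phi^{-1}Z'$. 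Hence $\Phi^{-1}\sigma' = \tau$ is a genuine stability condition and $\sigma' \in \Dom(\Phi^{-1})$.

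The main obstacle is the local completeness step. To apply \cref{lem:deformation} to $\sigma = \tau_N$ and $W$ I must verify the hypothesis $|W(E) - \cZ(\tau_N)(E)| < \sin(\varepsilon\pi)\,|\cZ(\tau_N)(E)|$ for every $\tau_N$-semistable $E$, which requires translating $d$-smallness into a genuine multiplicative estimate on the central charges of semistable objects --- the standard fact that $d(\sigma,\tau) < \varepsilon < 1/8$ forces $Z_\sigma$ and $Z_\tau$ to be uniformly close on $\sigma$-semistables. Securing this estimate uniformly in $n$, and confirming that the slicing manufactured by the deformation lemma is \emph{exactly} the pullback slicing $\Phi^{-1}\cP'$ rather than merely some slicing realising the central charge $W$, is the delicate heart of the argument; everything else is formal manipulation of HN filtrations together with the defining property \eqref{assump:nonzero}.
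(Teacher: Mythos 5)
Your proposal is correct and takes essentially the same route as the paper's source for this lemma \cite[Lemmas 2.8, 2.9]{MMS_09}: the key identities $\phi^{\pm}_{\Phi^{-1}\sigma'}(E)=\phi^{\pm}_{\sigma'}(\Phi(E))$ and $m_{\Phi^{-1}\sigma'}(E)=m_{\sigma'}(\Phi(E))$ (valid since $\Phi$ sends HN filtrations to HN filtrations, using $\Phi(E)\neq 0$ for $E\neq 0$ from \eqref{assump:nonzero}) yield the non-expansion $d(\Phi^{-1}\sigma',\Phi^{-1}\varsigma')\leq d(\sigma',\varsigma')$ and hence continuity, while closedness is proved there exactly as you outline, by deforming $\Phi^{-1}\sigma'_N$ via \cref{lem:deformation} with target central charge $\Phi^{-1}Z'$ and identifying the resulting slicing with $\Phi^{-1}\cP'$. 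The two points you flag as delicate --- converting $d$-closeness of $\sigma'_N$ and $\sigma'$ into the multiplicative estimate on central charges of semistables, and pinning down the deformed slicing (via the uniqueness of a stability condition with given central charge within distance $<1$) --- are precisely the standard steps carried out in \cite{MMS_09}, so there is no gap.
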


Now let us return our focus to triangulated module categories $\cD$ over $\cC$.
\begin{lemma} \label{lem:Cactionpullback}
Let $C \in \cC$.
Then
\begin{enumerate}
\item the endofunctor $C \otimes -\colon \cD \ra \cD$ satisfies \eqref{assump:nonzero};
\item for all $\cC$-equivariant stability conditions $\sigma = (\cP, Z) \in \Stab_{\cC}(\cD)$, we have
\[
(C\otimes -)^{-1}\cP = \cP \quad \text{and} \quad (C\otimes -)^{-1}Z = \FPdim(C)\cdot Z,
\]
so that $(C\otimes -)^{-1}\sigma$ is also a $\cC$-equivariant stability condition. 
\end{enumerate}
In particular, $\Stab_{\cC}(\cD) \subseteq \Dom((C\otimes -)^{-1})$ and $(C \otimes -)^{-1}$ maps $\Stab_{\cC}(\cD)$ to itself.
\end{lemma}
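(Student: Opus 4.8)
The plan is to prove the two numbered claims separately: part (i) is a formal consequence of rigidity of $\cC$, while part (ii) combines $\cC$-equivariance with the Harder--Narasimhan machinery underlying \cref{prop:cCpreservesemistable}. Throughout I would assume $C \neq 0$, since these statements are only applied to the nonzero objects of $\cC$ (indeed \eqref{assump:nonzero} fails for the zero functor, where $\FPdim(C) \cdot Z$ also degenerates).

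For part (i), I would exploit that $C \otimes -$ admits an adjoint given by tensoring with a dual of $C$. Using the same adjunction as in the proof of \cref{prop:cCpreservesemistable}, namely $\Hom_{\cD}(\leftdual{C} \otimes A, B) \cong \Hom_{\cD}(A, C \otimes B)$, I obtain
\[
\Hom_{\cD}(C \otimes X, C \otimes Y) \cong \Hom_{\cD}(\leftdual{C} \otimes C \otimes X, Y).
\]
The key input is that $\1$ is a direct summand of $\leftdual{C} \otimes C$: the evaluation $\leftdual{C} \otimes C \ra \1$ is nonzero for $C \neq 0$ (by the zigzag identities), and since $\1$ is simple in the semisimple category $\cC$ this map splits. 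Writing $\leftdual{C} \otimes C \cong \1 \oplus R$ and using additivity of the action, so that $\leftdual{C} \otimes C \otimes X \cong X \oplus (R \otimes X)$, I conclude that $\Hom_{\cD}(X,Y)$ is a direct summand of $\Hom_{\cD}(C \otimes X, C \otimes Y)$; in particular the vanishing of the latter forces the vanishing of the former, which is exactly \eqref{assump:nonzero}. I record the by-product (obtained by taking $X = Y = E$ and noting $\id_E \neq 0$) that $C \otimes E \neq 0$ whenever $E \neq 0$.

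For part (ii), the central charge identity is immediate: since $Z \in \Hom_{K_0(\cC)}(K_0(\cD), \C)$ is a $K_0(\cC)$-module homomorphism and $\C$ carries the module structure via $\FPdim$, we have $(C \otimes -)^{-1}Z([X]) = Z([C \otimes X]) = Z([C]\cdot[X]) = \FPdim(C)\cdot Z(X)$. For the slicing identity $(C \otimes -)^{-1}\cP = \cP$, the inclusion $\cP(\phi) \subseteq (C \otimes -)^{-1}\cP(\phi)$ is precisely the defining condition $C \otimes \cP(\phi) \subseteq \cP(\phi)$ of \cref{defn: C equivariant stab}. The reverse inclusion is the crux and the one real obstacle: given $E$ with $C \otimes E \in \cP(\phi)$, I would take the Harder--Narasimhan filtration of $E$ with semistable pieces $E_i \in \cP(\phi_i)$ and strictly decreasing phases $\phi_1 > \cdots > \phi_m$, and apply the exact functor $C \otimes -$ to the defining triangles. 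By equivariance each $C \otimes E_i$ lies in $\cP(\phi_i)$, and by the by-product above each is nonzero, so the resulting filtration of $C \otimes E$ is again a Harder--Narasimhan filtration with the same strictly decreasing phases. Uniqueness of HN filtrations then forces $m = 1$ and $\phi_1 = \phi$, whence $E = E_1 \in \cP(\phi)$. This step is essentially the slicing-level incarnation of \cref{prop:cCpreservesemistable}.

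Finally, for the concluding assertions, part (ii) identifies $(C \otimes -)^{-1}\sigma = (\cP, \FPdim(C)\cdot Z)$; since $\FPdim(C) > 0$, this is simply the image of $\sigma$ under the element $\tfrac{i}{\pi}\log\FPdim(C) \in \C$ of the $\C$-action, hence a genuine locally-finite stability condition, and its central charge remains $K_0(\cC)$-linear. Therefore $(C \otimes -)^{-1}\sigma \in \Stab_{\cC}(\cD)$, which yields both $\Stab_{\cC}(\cD) \subseteq \Dom((C \otimes -)^{-1})$ and the fact that $(C \otimes -)^{-1}$ maps $\Stab_{\cC}(\cD)$ to itself. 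Everything apart from the reverse slicing inclusion is formal, so I expect that HN-filtration step to be the only point requiring care.
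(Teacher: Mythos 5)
Your proof is correct, and its skeleton matches the paper's own: part (i) via an adjunction with a dual plus the fact that $\1$ splits off the composite (the paper uses the mirror-image adjunction $\Hom_{\cD}(C\otimes X, C\otimes Y)\cong\Hom_{\cD}(X,\rightdual{C}\otimes C\otimes Y)$ and splits $\1$ off $\rightdual{C}\otimes C$, rather than your $\leftdual{C}\otimes C$ on the left --- an immaterial difference of sides), the central charge identity from $K_0(\cC)$-linearity of $Z$, the inclusion $\cP(\phi)\subseteq(C\otimes-)^{-1}\cP(\phi)$ read off from \cref{defn: C equivariant stab}, and the closing observation that $(\cP,\FPdim(C)\cdot Z)$ is the image of $\sigma$ under $\tfrac{i}{\pi}\log\FPdim(C)\in\C$ acting on $\Stab(\cD)$. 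The one place you genuinely diverge is the reverse inclusion $(C\otimes-)^{-1}\cP\subseteq\cP$: the paper dispatches it in one line by citing \cref{prop:cCpreservesemistable} (the implication from semistability of $C\otimes A$ for some nonzero $C$ to semistability of $A$) ``applied to any preferred choice of heart containing $\cP(\phi)$'', whereas you reprove the implication at the slicing level --- applying the exact functor $C\otimes-$ to the HN triangles of $E$, using your part-(i) by-product $C\otimes E_i\neq 0$ to see that the result is again an HN filtration with the same strictly decreasing phases, and invoking uniqueness of HN filtrations to force a single piece. Your route is slightly longer but self-contained, and it quietly supplies a step the paper's citation glosses over: \cref{prop:cCpreservesemistable} is a statement about objects of an abelian category, so to quote it one must first know that $E$ itself lies in a heart $\cP(I)$ --- which is most naturally verified by exactly the HN-uniqueness argument you give, so your version establishes rather than presupposes it. Finally, your standing hypothesis $C\neq 0$ is the right reading of the lemma (property \eqref{assump:nonzero} and $\FPdim(C)\cdot Z$ both degenerate for the zero functor), and is consistent with how the paper applies the lemma, namely only to simples $S\in\Irr(\cC)$.
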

\begin{proof}
To prove (i), note that the existence of duals for $C$ gives the following natural isomorphism
\[
\Hom_{\cD}(C\otimes X, C\otimes Y) \cong \Hom_{\cD}(X, \rightdual{C} \otimes C \otimes Y).
\]
By semisimplicity and duality, the monoidal unit $\1$ must appear as a summand of $\rightdual{C} \otimes C$, so $\Hom_{\cD}(X, Y)$ appears as a summand of $\Hom_{\cD}(X, \rightdual{C} \otimes C \otimes Y)$.
This proves that \eqref{assump:nonzero} is satisfied by $C\otimes -$.

Let us now prove (ii).
Suppose $\sigma = (\cP, Z)$ is $\cC$-equivariant.
Then it follows directly from the $\cC$-equivariant property that $(C\otimes -)^{-1}Z[X] = Z[C\otimes X] = \FPdim(C)\cdot Z[X]$ for all $[X] \in K_0(\cD)$.
We are left to show that $(C\otimes -)^{-1}\cP = \cP$. 
The containment $(C\otimes -)^{-1}\cP \supseteq \cP$ is direct.
Indeed, given $X \in \cP(\phi)$, $\sigma$ being $\cC$-equivariant says that $C \otimes X \in \cP(\phi)$, which gives us $X \in (C\otimes -)^{-1}\cP(\phi)$.
Conversely, let $X \in (C\otimes -)^{-1}\cP(\phi)$, so that we know $C\otimes X \in \cP(\phi)$.
Applying \cref{prop:cCpreservesemistable} to any preferred choice of heart containing $\cP(\phi)$ shows that $X$ must also be semistable of the same phase, i.e. $X \in \cP(\phi)$ as required.

Note that $(C\otimes -)^{-1}\sigma = (\cP, \FPdim(C) \cdot Z)$ is indeed a stability condition, since $(\cP, \FPdim(C) \cdot Z)$ is obtained from the (continuous) action of $\C$ on $\sigma \in \Stab(\cD)$, i.e.\ $i \log \FPdim(C)/\pi \in \C$ applied to $\sigma$.
The fact that it is $\cC$-equivariant is immediate.

The final statement in the lemma is a direct consequence of (i) and (ii).
\end{proof}

Now we are ready to prove that $\Stab_{\cC}(\cD)$ is closed.
\begin{theorem}\label{thm:equivariantisclosed}
The subset $\Stab_{\cC}(\cD) \subseteq \Stab(\cD)$ of $\cC$-equivariant stability conditions is a closed subset.
\end{theorem}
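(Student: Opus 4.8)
The plan is to exhibit $\Stab_{\cC}(\cD)$ as a finite intersection of closed subsets, following the strategy of \cite{MMS_09} but working with the (only partially defined) pullbacks $(S\otimes-)^{-1}$ in place of honest autoequivalences. For each simple $S \in \Irr(\cC)$, set $g_S \coloneqq i\log\FPdim(S)/\pi \in \C$ and consider
\[
F_S \coloneqq \{ \sigma \in \Dom((S\otimes-)^{-1}) \mid (S\otimes-)^{-1}\sigma = g_S \cdot \sigma \} \subseteq \Stab(\cD),
\]
where $g_S \cdot \sigma$ denotes the $\C$-action recalled above. First I would prove the set-theoretic identity
\[
\Stab_{\cC}(\cD) = \bigcap_{S \in \Irr(\cC)} F_S .
\]

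For the inclusion $\subseteq$, this is essentially \cref{lem:Cactionpullback}: if $\sigma = (\cP, Z)$ is $\cC$-equivariant then $\sigma \in \Dom((S\otimes-)^{-1})$ and $(S\otimes-)^{-1}\sigma = (\cP, \FPdim(S)\cdot Z)$, which equals $g_S\cdot\sigma$ since $g_S$ has vanishing real part (so the slicing $\cP$ is unchanged) and rescales the central charge by $e^{\log\FPdim(S)} = \FPdim(S)$. For the reverse inclusion, suppose $\sigma \in F_S$ for every simple $S$. Unwinding the equality $(S\otimes-)^{-1}\sigma = g_S\cdot\sigma$ of stability conditions gives, on central charges, $Z[S\otimes X] = \FPdim(S)\,Z[X]$ for all $X$; since the simples span $K_0(\cC)$, this shows $Z \in \Hom_{K_0(\cC)}(K_0(\cD),\C)$. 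On slicings it gives $(S\otimes-)^{-1}\cP(\phi) = \cP(\phi)$, hence in particular $\cP(\phi) \subseteq (S\otimes-)^{-1}\cP(\phi)$, i.e. $S\otimes\cP(\phi)\subseteq\cP(\phi)$; as every object of $\cC$ is a direct sum of simples and $\cP(\phi)$ is additive, $C\otimes\cP(\phi)\subseteq\cP(\phi)$ for all $C \in \cC$. Thus $\sigma$ is $\cC$-equivariant.

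It then remains to check that each $F_S$ is closed in $\Stab(\cD)$, from which closedness of the \emph{finite} intersection $\Stab_{\cC}(\cD)$ is immediate (recall $\Irr(\cC)$ is finite). By \cref{lem:domclosedcts}, the subset $\Dom((S\otimes-)^{-1})$ is closed in $\Stab(\cD)$ and the map $(S\otimes-)^{-1}\colon \Dom((S\otimes-)^{-1}) \ra \Stab(\cD)$ is continuous; the map $\sigma \mapsto g_S\cdot\sigma$ is continuous as a restriction of the $\C$-action. Hence $F_S$ is the equalizer of two continuous maps out of the closed set $\Dom((S\otimes-)^{-1})$ into the metrizable (hence Hausdorff) manifold $\Stab(\cD)$, so it is closed. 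Therefore $\Stab_{\cC}(\cD)$ is closed.

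The only genuinely delicate point, and the reason the argument must depart from the group case, is that $S\otimes-$ is not invertible: there is no honest $\Aut(\cD)$-action whose fixed locus one could cut out. The workaround is precisely to replace ``fixed by an autoequivalence'' with ``lies in $\Dom$ and is sent to a prescribed $\C$-translate'', and to lean on \cref{lem:domclosedcts} for the closedness of the domain and the continuity of the partially defined pullback; verifying the identity $\Stab_{\cC}(\cD) = \bigcap_S F_S$ (especially that full equality of slicings, not merely one containment, characterizes equivariance) is where the care is needed.
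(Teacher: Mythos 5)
Your proposal is correct and takes essentially the same route as the paper: its closed set $I = \Psi^{-1}(\Delta)$, with $\Psi = \prod_{S\in\Irr(\cC)}(S\otimes -)^{-1}$ and $\Delta$ the diagonal twisted componentwise by the $\C$-translates $i\log\FPdim(S)/\pi$, is precisely your intersection $\bigcap_{S\in\Irr(\cC)}F_S$ of per-simple equalizers, and both arguments rest on the same inputs (\cref{lem:domclosedcts} for closedness of the domains and continuity of the partially defined pullbacks, \cref{lem:Cactionpullback} for one inclusion, and the same unwinding of central charge and slicing for the converse). If anything, your formulation is marginally tidier, since demanding $(S\otimes-)^{-1}\sigma = g_S\cdot\sigma$ identifies the comparison stability condition with $\sigma$ by definition, whereas the paper implicitly uses the unit simple $S_0 = \1$ to pin the diagonal witness down to $\sigma$ itself.
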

\begin{proof}
Let us consider the set
\[
\Delta \coloneqq \{ \left( (\FPdim(S)\cdot Z, \cP) \right)_{S \in \Irr(\cC)} \} \subseteq \prod_{S \in \Irr(\cC)} \Stab(\cD),
\]
which is a closed subset.
Indeed, the diagonal is closed in $\prod_{S \in \Irr(\cC)} \Stab(\cD)$ and $\Delta$ is obtained by the continuous action of $\C$ on the appropriate components of $\Stab(\cD)$ (acted on by $i \log \FPdim(S)/\pi \in \C$).
By \cref{lem:domclosedcts}, we get that $F \coloneqq \bigcap_{S \in \Irr(\cC)} \Dom((S\otimes -)^{-1})$ is a closed subset of $\Stab(\cD)$.
Moreover, the map
\[
\Psi \coloneqq \prod_{S\in \Irr(\cC)} (S\otimes -)^{-1}|_F\colon F \ra \prod_{S \in \Irr(\cC)} \Stab(\cD)
\]
is continuous.
We denote by
\[
I \coloneqq \Psi^{-1}(\Delta)
\]
the inverse image of $\Delta$ under $\Psi$, which is a closed subset of $F$.
Since $F$ itself is closed in $\Stab(\cD)$, $I$ is also a closed subset of $\Stab(\cD)$.

We claim that $I$ is exactly $\Stab_{\cC}(\cD)$, which would then complete the proof.
\cref{lem:Cactionpullback} shows that $\Stab_{\cC}(\cD)$ is contained in $I$, so it remains to show the converse.
Suppose $\sigma = (\cP, Z) \in I$, which means for all $S \in \Irr(\cC)$, we have $(S\otimes -)^{-1}\sigma = (\cP, \FPdim(S)\cdot Z)$.
Since $\FPdim$ is a ring homomorphism and $\cC$ is semisimple, we get that this is actually true for all $C \in \cC$, namely
\[
(C \otimes -)^{-1}\sigma = (\cP, \FPdim(C)\cdot Z) \quad \text{for all } C \in \cC.
\]
Having $(C \otimes -)^{-1}\cP(\phi) = \cP(\phi)$ for all $C \in \cC$ and all $\phi \in \R$ implies that $C \otimes \cP(\phi) \subseteq \cP(\phi)$ for all $C$ and all $\phi$.
On the other hand, $(C\otimes -)^{-1}Z = \FPdim(C)\cdot Z$ shows that $Z$ is a $K_0(\cC)$-module homomorphism.
As such, $\sigma$ is indeed $\cC$-equivariant.
\end{proof}

Let us record the main result of this whole section, which combines \cref{thm:equivariantissubmfld} and \cref{thm:equivariantisclosed}; and generalises \cref{thm:MMSclosedsubmfld} (i.e.\ \cite[Theorem 1.1]{MMS_09}).
\begin{corollary}\label{cor: C-equivariant complex submanifold}
Let $\Stab_{\cC}(\cD) \subseteq \Stab(\cD)$ be the subset of $\cC$-equivariant stability conditions on the triangulated module category $\cD$ over $\cC$.
Then $\Stab_{\cC}(\cD)$ is a closed, complex submanifold of $\Stab(\cD)$ that is locally modelled on the subspace $\Hom_{K_0(\cC)}(K_0(D), \bbC) \subseteq \Hom_\bbZ(K_0(D), \bbC)$.
\end{corollary}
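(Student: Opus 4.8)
The statement is the conjunction of two topological properties, so my plan is to establish each separately using the machinery already assembled and then observe that they combine formally: a subset of a manifold that is simultaneously an embedded complex submanifold and a closed subset is precisely a closed complex submanifold. Concretely I would invoke \cref{thm:equivariantissubmfld} for the submanifold structure and \cref{thm:equivariantisclosed} for closedness; the only content beyond these is the trivial topological observation that their conjunction is the desired conclusion. Thus the real work lives in recapitulating why each ingredient holds, and I organise the proposal around those two ingredients.

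For the submanifold structure, the strategy is to show that the forgetful map $\cZ$ restricts to a local homeomorphism from $\Stab_\cC(\cD)$ onto the $\C$-linear subspace $\Hom_{K_0(\cC)}(K_0(\cD),\C)$ of equivariant central charges. Since $\Stab(\cD)$ is already charted by $\cZ$ (\cref{thm:stabmfld}), it suffices to upgrade Bridgeland's deformation result (\cref{lem:deformation}) equivariantly: given a $\cC$-equivariant $\sigma=(\cP,Z)$ and a small perturbation $W$ lying inside the linear subspace, I must check that the lifted slicing $\cQ$ is again stable under the $\cC$-action. The central charge $W$ is equivariant by hypothesis, so the crux is verifying $C\otimes\cQ(\phi)\subseteq\cQ(\phi)$. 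Here I would use that each $\cQ(\phi)$ consists of the $W$-semistable objects of phase $\phi$ inside the enlarged heart $\cP(\phi-\varepsilon,\phi-\varepsilon+1]$, which is itself a $\cC$-module category, and then apply \cref{prop:cCpreservesemistable} to conclude that the exact functor $C\otimes-$ preserves this semistable subcategory and its phase. This provides the submanifold chart.

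For closedness, the plan is to realise $\Stab_\cC(\cD)$ as the preimage of a closed set under a continuous map. First, for each $C\in\cC$ the functor $C\otimes-$ satisfies property \eqref{assump:nonzero}, because rigidity and semisimplicity force the unit $\1$ to appear as a summand of $\rightdual{C}\otimes C$; hence \cref{lem:domclosedcts} makes each $\Dom((S\otimes-)^{-1})$ closed and the pullback continuous. I would then characterise equivariance spectrally: by \cref{lem:Cactionpullback}, $\sigma$ is $\cC$-equivariant if and only if $(S\otimes-)^{-1}\sigma=(\cP,\FPdim(S)\cdot Z)$ for every simple $S$. This exhibits $\Stab_\cC(\cD)$ as $\Psi^{-1}(\Delta)$, where $\Psi=\prod_{S}(S\otimes-)^{-1}$ is continuous on the closed set $U=\bigcap_S\Dom((S\otimes-)^{-1})$ and $\Delta$ is the rescaled diagonal, itself closed because it is obtained from the closed diagonal by the continuous $\C$-action via $i\log\FPdim(S)/\pi$. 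Being the preimage of a closed set under a map continuous on a closed subset, $\Stab_\cC(\cD)$ is closed in $\Stab(\cD)$.

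I expect the genuine obstacle to be \cref{prop:cCpreservesemistable} --- the preservation of semistability and phase under the functors $C\otimes-$ --- rather than the formal assembly of the corollary. The essential difficulty, absent in the purely group-theoretic case of \cite{MMS_09} where the $T_g$ are equivalences, is that $C\otimes-$ need \emph{not} be invertible, so one cannot simply transport HN filtrations back and forth. The resolution I have in mind mirrors the established argument: assuming a semistable $A$ had $S\otimes A$ unstable, extract the top HN piece $E$, pass through the adjunction $\Hom(E,S\otimes A)\cong\Hom(\leftdual{S}\otimes E,A)$, and iterate to produce a subobject of $\leftdual{S}\otimes S\otimes A$ violating semistability of $A$, using semisimplicity to control the phases appearing in $\leftdual{S}\otimes S$. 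Once this phase-rigidity is secured, both the equivariant deformation step and the spectral characterisation of equivariance fall out, and the closed-submanifold conclusion is immediate.
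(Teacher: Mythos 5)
Your proposal is correct and follows the paper's own route exactly: the corollary is obtained by combining \cref{thm:equivariantissubmfld} and \cref{thm:equivariantisclosed}, and your recapitulations of the two ingredients (the equivariant upgrade of Bridgeland's deformation lemma via \cref{prop:cCpreservesemistable}, and the realisation of $\Stab_{\cC}(\cD)$ as $\Psi^{-1}(\Delta)$ using \cref{lem:Cactionpullback} and the rescaled diagonal) match the paper's arguments step for step. You have also correctly identified \cref{prop:cCpreservesemistable} --- needed precisely because $C\otimes-$ is not invertible, unlike the group case of \cite{MMS_09} --- as the genuine content, which is where the paper places it too.
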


\section{A duality for stability conditions} \label{sec:Moritadual}
Throughout this section, we let $G$ be a finite group and $\cD$ be a $\Bbbk$-linear triangulated module category over $\vec_G$; we remind the reader that this is equivalent to saying $\cD$ is equipped with an action of $G$ (in the sense of \cref{defn:Gaction}). To ensure that $\rep(G)$ is a fusion category, we further assume throughout that the underlying field $\Bbbk$ has characteristic not dividing $|G|$.
We shall also use the convention that module categories mean left module categories.
The main result of this section is to show (under mild assumptions on $\cD$) that the space of $\vec_G$-equivariant (equivalently, $G$-invariant) stability conditions is biholomorphic to the space of $\rep(G)$-equivariant stability conditions
\[
\Stab_{\vec_G}(\cD) \cong \Stab_{\rep(G)}(\cD^G),
\]
where  $\cD^G$ is the $G$-equivariantisation of $\cD$, which we shall define in the next section.

\subsection{Morita duality: \texorpdfstring{$G$}{G} (de-)equivariantisation}
\begin{definition}\label{defn:equivariantisation}
Let $\cD$ be a $\Bbbk$-linear triangulated module category over $\vec_G$.
We define the \emph{$G$-equivariantisation} of $\cD$, denoted by $\cD^G$, as follows:
\begin{itemize}
\item the objects, called the \emph{$G$-equivariant objects}, are the pairs $(X, (\theta_g)_{g\in G})$ with $X \in \cD$ and isomorphisms $\theta_g\colon g \otimes X \ra X$ in $\cD$ making the following diagrams commutative:
\[
\begin{tikzcd}
g \otimes h \otimes X \ar[r,"g \otimes \theta_h"] \ar[d, "\mult\otimes \id_X", swap] & g \otimes X \ar[d, "\theta_g"] \\
gh \otimes X \ar[r, "\theta_{gh}"] & X
\end{tikzcd};
\]
\item the morphisms are those in $\cD$ that commute with the isomorphisms $\theta_g$, $g\in G$.
\end{itemize}
\end{definition}
The category $\cD^G$ is a $\Bbbk$-linear module category over $\rep(G)$; see e.g.\ \cite[\S 4.1.3]{DGNO_braidedI}.
Explicitly, recall from \cref{eg:modulecat}\ref{item:vec-action} that any $\Bbbk$-linear category is a module category over $\vec$.
Through this structure, each representation $(V, (\rho_g)_{g\in G}) \in \rep(G)$ acts on $(X, (\theta_g)_{g \in G}) \in \cD^G$ by
\[
(V, (\rho_g)_{g\in G}) \otimes (X, (\theta_g)_{g \in G})\coloneqq
	(V \boxtimes X, (\theta^\rho_g)_{g \in G}),
\]
where each $\theta^\rho_g$ is defined via the composition
\[
\theta^\rho_g: g \otimes (V \boxtimes X) \xra{\cong} V\boxtimes (g \otimes X) \xra{\rho_g \boxtimes \theta_g} V \boxtimes X.
\]
The first isomorphism above comes from the fact that $G$ acts via additive functors.
The action of $(V, (\rho_g)_{g\in G})$ on morphisms is defined via the action of $V$ on $\cD^G$.
We leave it to the reader to check the required commutativity conditions.

In general, $\cD^G$ need not be a triangulated category. However, we will only consider cases where $\cD^G$ forms a triangulated category in the following sense.

\begin{assumption}\label{assump:triang}
Let $\cF: \cD^G \ra \cD$ denote the forgetful functor sending $(X, (\theta_g)_{g\in G}) \mapsto X$. We assume $\cD^G$ is triangulated in such a way that $\cF$ preserves and reflects exact triangles.
In other words, 
\[
(X, (\theta_g)_{g \in G}) \ra (Y, (\theta'_g)_{g \in G}) \ra (Z, (\theta''_g)_{g \in G}) \ra \text{ is exact } \iff X \ra Y \ra Z \ra \text{ is exact.}
\]
\end{assumption}

Note that the assumption above is satisfied in many situations of interest:
\begin{itemize}
\item Let $\cA$ be an abelian module category over $\vec_G$. Then $D^b\cA$ is a triangulated module category over $\vec_G$. Moreover, $(D^b\cA)^G\cong D^b(\cA^G)$  by \cite[Theorem 7.1]{elaginEquivariantTriangulatedCategories2015} and it follows that $\cD = D^b\cA$ satisfies \cref{assump:triang}.
\item Let $\cD$ be a triangulated module category over $\vec_G$ and suppose $\cD$ has a DG-enhancement (the $G$-action need not lift to the DG-setting). Then by \cite[Corollary 6.10]{elaginEquivariantTriangulatedCategories2015}, $\cD$ satisfies \cref{assump:triang}.
\end{itemize}

\begin{remark}
Assume $\characteristic (\Bbbk) =0$. Let $\vec_G\lMOD$ and $\rep(G)\lMOD$ denote the 2-category of $\Bbbk$-linear, idempotent complete (additive) module categories over $\vec_G$ and $\rep(G)$ respectively.
The 2-functor sending $\cA \in \vec_G\lMOD$ to its $G$-equivariantisation $\cA^G \in \rep(G)\lMOD$ is a 2-equivalence \cite[Theorem 4.4]{DGNO_braidedI}.
This is known as the (1-)categorical Morita duality of $\vec_G$ and $\rep(G)$.
\end{remark}

\begin{remark}
Suppose $G$ is moreover abelian, so that $\rep(G) \cong \vec_{\widehat{G}}$, where $\widehat{G}\coloneqq\Hom(G,\Bbbk^\ast)$ is the Pontryagin dual. Then $(\cA^G)^{\widehat{G}}\cong\cA$ by \cite[Theorem 4.2]{elaginEquivariantTriangulatedCategories2015}.
\end{remark}

\subsection{\texorpdfstring{$G$}{G}-invariant and \texorpdfstring{$\rep(G)$}{rep(G)}-equivariant stability conditions}

In what follows, we let
 $\cD$ be a triangulated module category over $\vec_G$ whose equivariantisation  $\cD^G$ satisfies \cref{assump:triang}.

To begin, let us record some auxiliary results.
\begin{lemma}	\label{lem:FIproperties}
We have the following:
\begin{enumerate}
\item The forgetful functor $\cF\colon \cD^G \ra \cD$ is faithful.
\item Consider the induction functor $\cI\colon \cD \ra \cD^G$ defined by
\[
\cI(X) \coloneqq \left( 
	\bigoplus_{g \in G} (g \otimes X), (\wt{\epsilon}_h)_{h \in G}
	\right),
\]
where $\wt{\epsilon}_h$ is defined on each summand by the isomorphism 
\[
h \otimes h^{-1}g \otimes X \xra{\mult \otimes \id_X} g \otimes X.
\]
Then $\cI$ is a biadjoint of the forgetful functor $\cF$.
Moreover, $\cI$ is exact. \label{lem:ind}
\item The composite functor $\cI \circ \cF\colon \cD^G \ra \cD^G$ is equivalent to the functor $R\otimes -$ given by the action of the regular representation $R \coloneqq \Bbbk[G]\in \rep(G)$ on $\cD^G$.
\item The composite functor $\cF \circ \cI\colon \cD \ra \cD$ sends objects $X \mapsto \bigoplus_{g \in G} g \otimes X$.
\end{enumerate}
\end{lemma}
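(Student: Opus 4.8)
The plan is to verify the four properties of the forgetful functor $\cF$ and the induction functor $\cI$ essentially by unwinding the definitions, since each assertion is a structural property of the $G$-equivariantisation. These statements are the $\vec_G$-module-category analogues of classical facts about (de-)equivariantisation, so the work is mostly bookkeeping with the data $(\theta_g)_{g\in G}$ rather than anything requiring a clever idea.

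For \emph{faithfulness} (i), I would simply note that a morphism in $\cD^G$ \emph{is} a morphism in $\cD$ (one commuting with the equivariant structure), so $\cF$ acts as the identity on hom-sets at the level of underlying maps; hence it is automatically faithful. For the \emph{biadjointness and exactness} in (ii), the plan is first to check that $\cI(X)$ as defined is a genuine $G$-equivariant object, i.e.\ that the isomorphisms $\wt{\epsilon}_h\colon h\otimes\bigl(\bigoplus_{g}g\otimes X\bigr)\to\bigoplus_{g}g\otimes X$ satisfy the cocycle/compatibility diagram in \cref{defn:equivariantisation}; this follows from associativity of the multiplication isomorphisms $\gamma_{g,h}$ of the $G$-action. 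Then I would exhibit the adjunction isomorphisms directly: for $Y\in\cD^G$ and $X\in\cD$, a $\cD^G$-morphism $\cI(X)\to Y$ is, after composing with the inclusion of the $g=e$ summand $X\hookrightarrow\bigoplus_g g\otimes X$, determined by its restriction to that summand, giving a natural bijection with $\Hom_\cD(X,\cF(Y))$; this establishes $\cI\dashv\cF$. The reverse adjunction $\cF\dashv\cI$ is obtained symmetrically (projecting onto the $g=e$ summand), using that $G$ is finite so the sum is also a product. Exactness of $\cI$ then follows from \cref{assump:triang}: since $\cF$ reflects exact triangles and $\cF\circ\cI(X)=\bigoplus_g g\otimes X$ is a finite direct sum of the exact functors $g\otimes-$, the composite is exact, and reflecting this through $\cF$ shows $\cI$ is exact.

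For (iii) and (iv), which are really computations of the two composites, I would proceed as follows. Statement (iv) is immediate from the definition of $\cI$ followed by forgetting the equivariant structure: $\cF\circ\cI(X)=\bigoplus_{g\in G}g\otimes X$ on the nose. For (iii), I would compute $\cI\circ\cF$ on an equivariant object $(X,(\theta_g)_g)$, obtaining $\bigl(\bigoplus_{g}g\otimes X,(\wt\epsilon_h)_h\bigr)$, and then construct a natural isomorphism to $R\otimes(X,(\theta_g)_g)$. The point is that the equivariant structure $(\theta_g)$ lets one ``untwist'' each summand: the isomorphisms $\theta_g\colon g\otimes X\xrightarrow{\cong}X$ assemble into an isomorphism $\bigoplus_g g\otimes X\cong\bigoplus_g X=R\otimes X$ (where $R=\Bbbk[G]$ acts by its underlying $|G|$-dimensional vector space), and one checks this intertwines $\wt\epsilon_h$ with the regular-representation action $\varphi_h\boxtimes\theta_h$. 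The main obstacle, and the only place demanding real care, is precisely this last compatibility check in (iii): one must verify that after untwisting by the $\theta_g$'s the residual $G$-action on $\bigoplus_g X$ is exactly the left-regular permutation action defining $R\in\rep(G)$, which requires tracking how $\wt\epsilon_h$ permutes summands against how the $\theta_g$ reindex them. Everything else reduces to naturality diagrams that commute by the associativity constraint of the $G$-action.
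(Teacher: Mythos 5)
Your proposal is correct and takes essentially the same route as the paper: the paper proves exactness of $\cI$ exactly as you do (exactness of the $\vec_G$-action, so that $\cF\circ\cI=\bigoplus_{g\in G}g\otimes -$ is exact, combined with \cref{assump:triang} since $\cF$ reflects exact triangles), and disposes of (i), (iii), (iv) and the biadjointness as ``a standard calculation'', citing the abelian-category case in \cite{Dem_11, DGNO_braidedI}. Your write-up simply carries out that standard calculation explicitly---including the correct identification of the one nontrivial point, namely the cocycle check in (iii) that the $\theta_g$ untwist $\wt{\epsilon}_h$ into the left-regular action---so there is no substantive difference in approach.
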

\begin{proof}
The fact that $\cI$ is exact follows from the exactness of the action of $\vec_G$ (which is part of the definition of a module structure) and \cref{assump:triang} on $\cD$.

The rest follow from a standard calculation; see e.g.\ \cite{Dem_11, DGNO_braidedI} for the case of abelian categories.
\end{proof}
\begin{lemma}\label{lem:FIP1}
Both the functors $\cI: \cD \ra \cD^G$ and $\cF: \cD^G \ra \cD$ satisfy property \eqref{assump:nonzero}.
\end{lemma}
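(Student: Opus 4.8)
The plan is to treat the two functors separately: the forgetful functor $\cF$ follows immediately from faithfulness, while the induction functor $\cI$ is handled through the biadjunction recorded in \cref{lem:FIproperties}.

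For $\cF\colon \cD^G \ra \cD$, property \eqref{assump:nonzero} is a direct consequence of \cref{lem:FIproperties}(i). Since $\cF$ is faithful, the natural map $\Hom_{\cD^G}(X,Y) \ra \Hom_{\cD}(\cF X, \cF Y)$ is injective for all $X, Y \in \cD^G$. Hence if the target vanishes, so does the source, which is precisely \eqref{assump:nonzero} for $\cF$.

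For $\cI\colon \cD \ra \cD^G$, I would exploit that $\cI$ is (in particular) left adjoint to $\cF$, by \cref{lem:FIproperties}(ii). The adjunction yields a natural isomorphism
\[
\Hom_{\cD^G}(\cI X, \cI Y) \cong \Hom_{\cD}(X, \cF\cI Y).
\]
By \cref{lem:FIproperties}(iv) we have $\cF\cI Y \cong \bigoplus_{g \in G} g \otimes Y$. The identity element $e \in G$ corresponds to the monoidal unit $\1$ of $\vec_G$, and $\1 \otimes Y \cong Y$, so $Y$ occurs as a direct summand of $\cF\cI Y$. Consequently $\Hom_{\cD}(X, Y)$ is a direct summand of $\Hom_{\cD}(X, \cF\cI Y) \cong \Hom_{\cD^G}(\cI X, \cI Y)$. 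Therefore, whenever $\Hom_{\cD^G}(\cI X, \cI Y) = 0$, its summand $\Hom_{\cD}(X,Y)$ vanishes as well, establishing \eqref{assump:nonzero} for $\cI$.

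I do not anticipate a serious obstacle: both verifications are formal consequences of the biadjunction together with the explicit description of $\cF\cI$. The only point needing (minor) care is the identification of $Y$ as a genuine direct summand of $\bigoplus_{g \in G} g \otimes Y$, which is exactly where the trivial grading component $g = e$ (the monoidal unit $\1$) is used. One could run the symmetric argument using the right-adjoint structure of $\cI$ instead, but the left-adjoint version above suffices.
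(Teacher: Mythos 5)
Your proposal is correct and follows the paper's own argument almost verbatim: for $\cI$ you use the adjunction $\Hom_{\cD^G}(\cI X,\cI Y)\cong\Hom_{\cD}(X,\cF\cI Y)$ and identify $\Hom_{\cD}(X,Y)$ as the summand coming from the identity element $g=e$, exactly as in the paper. For $\cF$ the paper explicitly offers faithfulness as one of two options, and that is the route you took, so there is nothing to correct.
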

\begin{proof}
Using the properties of $\cI$ and $\cF$ from \cref{lem:FIproperties}, we get that
\[
\Hom_{\cD^G}(\cI(X), \cI(Y)) \cong \Hom_{\cD}(X,(\cF\circ\cI)(Y)) \cong \Hom_{\cD}\left(X, \bigoplus_{g \in G} g\otimes Y \right).
\]
The right-most hom space contains $\Hom_{\cD}(X,Y)$ as a summand (pick $g$ to be the identity element of $G$), so $\cI$ satisfies property \eqref{assump:nonzero}.
The same argument can be applied to $\cF$, or one could use the fact that $\cF$ is faithful.
\end{proof}
Before we state our main theorem, we will need the following assumptions on $\cD$.

\begin{assumption}\label{assumptions: bigger categories with small coproducts}
	\begin{enumerate}
	\item $\cD$ is essentially small and satisfies \cref{assump:triang}.
	\item The $\vec_G$ action on $\cD$ extends to an action on a triangulated category 
	$\wt{\cD}$ (containing $\cD$), which contains all small coproducts and satisfies  \cref{assump:triang}.
	\end{enumerate}
\end{assumption}

For example, let $\cA$ be an essentially small abelian category with a $G$-action that extends to an abelian category $\wt{\cA}$ containing all small coproducts. 
This includes: 
\begin{itemize}
\item $\cA = A\lmod$ and $\wt{\cA} = A\lMod$; here $A$ is a finite-dimensional algebra over $\Bbbk$ equipped with a $G$-action, and $A\lmod$ (resp.\ $A\lMod$) denotes the category of finite-dimensional (resp.\ all) modules; and
\item $\cA = \Coh(X)$ and $\wt{\cA} = \QCoh(X)$, where $X$ is a scheme over $\Bbbk$ equipped with a $G$-action.
\end{itemize}
Then the assumptions above hold for $\cD = D^b\cA$ (together with its induced $G$-action) with $\wt{\cD} = D\wt{\cA}$ the unbounded derived category on $\wt{\cA}$.
\begin{theorem}\label{thm:moritastability}
Let $\cD$ be a triangulated module category over $\vec_G$ satisfying \cref{assumptions: bigger categories with small coproducts}.
Then the induction functor $\cI\colon \cD \ra \cD^G$ and the forgetful functor $\cF\colon \cD^G \ra \cD$ induce biholomorphisms between the closed submanifolds of stability conditions $\Stab_{\rep(G)}(\cD^G)$ and $\Stab_{\vec_G}(\cD)$,
	\[
		\cI^{-1}\colon \Stab_{\rep(G)}(\cD^G) \overset{\cong}{\rightleftarrows} \Stab_{\vec_G}(\cD) \cocolon\cF^{-1},
	\]
	which are mutually inverse up to rescaling the central charge by $|G|$.
\end{theorem}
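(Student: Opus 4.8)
The plan is to verify that the two pullback maps $\cI^{-1}$ and $\cF^{-1}$ are each defined on the whole relevant space, land in the correct equivariant submanifold, are continuous, and have composites equal to a central-charge rescaling by $|G|$; the homeomorphism statement then follows formally. By \cref{lem:FIP1} both $\cI$ and $\cF$ satisfy property \eqref{assump:nonzero}, so by \cref{lem:domclosedcts} the maps $\cI^{-1}$ and $\cF^{-1}$ are defined and continuous on their domains $\Dom(\cI^{-1})\subseteq\Stab(\cD^G)$ and $\Dom(\cF^{-1})\subseteq\Stab(\cD)$. Both target spaces are closed complex submanifolds by \cref{cor: C-equivariant complex submanifold}, so it remains to match up the two maps and their composites.

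First I would handle the forgetful direction. That $\Stab_{\vec_G}(\cD)\subseteq\Dom(\cF^{-1})$ and that $\cF^{-1}$ restricts there to a continuous closed embedding into $\Stab(\cD^G)$ is exactly \cite[Theorem 1.1]{MMS_09}. To see its image lies in $\Stab_{\rep(G)}(\cD^G)$, I would check the two conditions of \cref{defn: C equivariant stab} directly: for $V\in\rep(G)$ and $Y\in\cD^G$ one has $\cF(V\otimes Y)\cong(\cF Y)^{\oplus\dim V}$, so $(\cF^{-1}Z)([V\otimes Y])=\dim(V)\cdot(\cF^{-1}Z)([Y])=\FPdim(V)\cdot(\cF^{-1}Z)([Y])$ (using $\FPdim=\dim$ on $\rep(G)$, \cref{eg:FPdim}), and likewise $V\otimes\cF^{-1}\cP(\phi)\subseteq\cF^{-1}\cP(\phi)$ since a direct sum of objects of $\cP(\phi)$ lies in $\cP(\phi)$. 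Thus $\cF^{-1}Z$ is a $K_0(\rep(G))$-module homomorphism and the slicing is $\rep(G)$-stable.

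The genuinely new input is the induction direction: I must show $\Stab_{\rep(G)}(\cD^G)\subseteq\Dom(\cI^{-1})$, i.e.\ that for a $\rep(G)$-equivariant $\sigma'=(\cP',Z')$ the pulled-back pair $(\cI^{-1}\cP',\cI^{-1}Z')$ satisfies the Harder--Narasimhan property. Since for nonabelian $G$ the fusion category $\rep(G)$ is not pointed, this cannot be quoted from \cite{MMS_09}; instead I would construct HN filtrations of an object $X\in\cD$ directly from the HN filtration of $\cI(X)$ in $\cD^G$, using the biadjunction $\cI\dashv\cF\dashv\cI$ of \cref{lem:FIproperties} to pass between subobjects of $\cI(X)$ in $\cD^G$ and subobjects of $X$ in $\cD$, and using the cocomplete ambient category $\wt\cD$ of \cref{assumptions: bigger categories with small coproducts} to guarantee that the relevant (a priori infinite) colimits defining maximal destabilising subobjects exist and descend back to $\cD$. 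This follows the strategy of \cite{MMS_09} and \cite[Lemma 4.11]{perryModuliSpacesStable2023} (the latter also supplying the minor adjustments needed for the support property). Once $\cI^{-1}\sigma'$ is known to be a stability condition, the isomorphism $\cI(h\otimes X)\cong\cI(X)$ for $h\in G$ shows, exactly as above, that it is $\vec_G$-equivariant, so $\cI^{-1}$ maps $\Stab_{\rep(G)}(\cD^G)$ into $\Stab_{\vec_G}(\cD)$. I expect this HN argument to be the main obstacle.

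Finally I would identify the composites. Since $\cF^{-1}\circ\cI^{-1}=(\cI\circ\cF)^{-1}$ and $\cI\circ\cF\cong R\otimes-$ with $R$ the regular representation (\cref{lem:FIproperties}), \cref{lem:Cactionpullback} applied with $\cC=\rep(G)$ and $C=R$ gives $(\cI\circ\cF)^{-1}\sigma'=(\cP',\FPdim(R)\cdot Z')=(\cP',|G|\cdot Z')$. Symmetrically $\cI^{-1}\circ\cF^{-1}=(\cF\circ\cI)^{-1}$ and $\cF\circ\cI\cong(\bigoplus_{g\in G}g)\otimes-$, so \cref{lem:Cactionpullback} with $\cC=\vec_G$ gives rescaling by $\FPdim(\bigoplus_{g}g)=|G|$. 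Hence both composites equal the central-charge rescaling by $|G|$, which is the homeomorphic action of $i\log|G|/\pi\in\C$. As $\cI^{-1}$ and $\cF^{-1}$ are continuous and their composites in both orders are homeomorphisms, they are mutually inverse homeomorphisms up to this rescaling, completing the proof.
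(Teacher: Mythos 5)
Your overall architecture matches the paper's proof, and most of it is correct as written: continuity of the two pullbacks via \cref{lem:FIP1} and \cref{lem:domclosedcts}; the codomain checks against \cref{defn: C equivariant stab} (using $\cF(V\otimes Y)\cong(\cF Y)^{\oplus\dim V}$ and closure of $\cP(\phi)$ under direct sums for one direction, $\cI(g\otimes X)\cong\cI(X)$ for the other); and the composite computations $\cI\circ\cF\cong\Bbbk[G]\otimes-$, $\cF\circ\cI\cong\bigoplus_{g\in G}g\otimes-$ from \cref{lem:FIproperties} together with \cref{lem:Cactionpullback}, giving rescaling by $\FPdim(\Bbbk[G])=|G|$, are exactly the paper's. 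The one genuine gap is at the step you yourself flag as the main obstacle, namely $\Stab_{\rep(G)}(\cD^G)\subseteq\Dom(\cI^{-1})$. Your claim that, since $\rep(G)$ is not pointed for nonabelian $G$, this ``cannot be quoted from \cite{MMS_09}'' is mistaken: what cannot be quoted is the group-specific \cite[Theorem 1.1]{MMS_09}, but the general inducing result \cite[Theorem 2.14]{MMS_09} (itself a special case of \cite[Theorem 2.1.2]{Pol_constant-t}) is stated for an arbitrary exact functor with biadjoints in the ambient cocomplete setting of \cref{assumptions: bigger categories with small coproducts}, with no pointedness or group-action hypothesis. This is precisely how the paper proceeds: it verifies the standing hypotheses of that theorem for both $\cI$ and $\cF$ (they restrict $\wt\cI,\wt\cF$; they satisfy \eqref{assump:nonzero} by \cref{lem:FIP1}; and $\wt\cF(E)\in\cD\Rightarrow E\in\cD^G$, $\wt\cI(E')\in\cD^G\Rightarrow E'\in\cD$), and then checks the single condition where equivariance enters: $(\cI\circ\cF)(\cP'(\phi))=\Bbbk[G]\otimes\cP'(\phi)\subseteq\cP'(\phi)$ for $\sigma'\in\Stab_{\rep(G)}(\cD^G)$, and dually $(\cF\circ\cI)(\cP(\phi))=\bigoplus_{g\in G}g\otimes\cP(\phi)\subseteq\cP(\phi)$ for $\sigma\in\Stab_{\vec_G}(\cD)$. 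So the ``main obstacle'' dissolves into a citation plus exactly the closure observations you already make elsewhere in your proposal.

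As submitted, then, your proof is incomplete at its central step: the descent of HN filtrations along $\cI$ is sketched (maximal destabilising subobjects via colimits in $\wt\cD$, biadjunction to move between $\cD$ and $\cD^G$) but not carried out, and executing that sketch would amount to reproving a substantial portion of \cite{MMS_09}/\cite{Pol_constant-t} -- feasible, since your strategy is the one those proofs use, but not done here. Replacing that paragraph with the application of \cite[Theorem 2.14]{MMS_09} as above closes the gap and makes your argument coincide with the paper's. A minor further remark: for the forgetful direction the paper also routes through Theorem 2.14 rather than \cite[Theorem 1.1]{MMS_09}, which keeps the two directions symmetric; your citation of Theorem 1.1 there is acceptable provided you note that \cref{assumptions: bigger categories with small coproducts} is what guarantees its hypotheses in the present generality.
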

\begin{proof}
We first show that $\Dom(\cI^{-1})$ and $\Dom(\cF^{-1})$ contain $\Stab_{\rep(G)}(\cD^G)$ and $\Stab_{\vec_G}(\cD)$ respectively. 
This will be achieved through an application of \cite[Theorem 2.14]{MMS_09}, which is a specific case of the more general result \cite[Theorem 2.1.2]{Pol_constant-t}.
To this end, note that \cref{assumptions: bigger categories with small coproducts} on $\cD$ gives us triangulated categories $\wt{\cD} \supseteq \cD$ and $\wt{\cD}^G \supseteq \cD^G$, with $\wt{\cD}$ satisfying \cref{assump:triang}.
As such, \cref{lem:FIproperties} also applies to $\wt{\cD}$.
It is immediate that the forgetful functor $\wt{\cF}$ and induction functor $\wt{\cI}$ between $\wt{\cD}$ and $\wt{\cD}^G$ restrict to the usual forgetful functor $\cF$ and induction functor $\cI$ between $\cD$ and $\cD^G$.
Note that $\wt{\cF}(E) \in \cD$ implies that $E \in \cD^G$, since $(\wt{\cI}\circ \wt{\cF})(E) = \Bbbk[G] \otimes E$ and $\cD^G$ is closed under the action of $\rep(G)$; similarly $\wt{\cI}(E') \in \cD^G$ implies $E' \in \cD$.
Finally, both $\cF$ and $\cI$ satisfy assumption \eqref{assump:nonzero} by \cref{lem:FIP1}.
Thus, all the assumptions stated before \cite[Theorem 2.14]{MMS_09} are satisfied.
To apply the theorem, notice that we have the following (stronger) properties:
\begin{itemize}
\item for all $(\cP',Z') \in \Stab_{\rep(G)}(\cD^G)$, 
\[
(\cI \circ \cF)(\cP'(\phi)) = \Bbbk[G] \otimes \cP'(\phi) \subseteq \cP'(\phi);
\]
\item for all $(\cP, Z) \in \Stab_{\vec_G}(\cD)$, 
\[
(\cF \circ \cI)(\cP(\phi)) = \bigoplus_{g \in G} g \otimes \cP(\phi) \subseteq \cP(\phi).
\]
\end{itemize}
As such, we conclude that 
\[
\Dom(\cI^{-1})\supseteq \Stab_{\rep(G)}(\cD^G); \quad \Dom(\cF^{-1}) \supseteq \Stab_{\vec_G}(\cD).
\]

Next, we show that $\cI^{-1}$ and $\cF^{-1}$ map to the correct codomain, i.e.
\[
\cI^{-1}(\Stab_{\rep(G)}(\cD^G)) \subseteq \Stab_{\vec_G}(\cD); \quad
\cF^{-1}(\Stab_{\vec_G}(\cD)) \subseteq \Stab_{\rep(G)}(\cD^G).
\]
Let $\sigma' = (\cP', Z') \in \Stab_{\rep(G)}(\cD^G)$. 
The fact that $\cI^{-1}\sigma$ is $G$-invariant follows from the fact that $\cI(X) \cong \cI(g \otimes X) \in \cD^G$ for all $g\in G$.

On the other hand, let $\sigma = (\cP, Z) \in \Stab_{\vec_G}(\cD)$.
Suppose $X \in \cF^{-1}\cP(\phi)$, so that $\cF(X) \in \cP(\phi)$.
Then for any $V\in \rep(G)$, we have $\cF(V \otimes X) = X^{\oplus \dim(V)}$.
Since $\cP(\phi)$ is an abelian category, it is closed under taking direct sums and hence $V \otimes X \in \cF^{-1}\cP(\phi)$.
Similarly, $Z(\cF(V\otimes X)) = \dim(V)\cdot Z(X)$, which shows that $\cF^{-1}\sigma$ is $\rep(G)$-equivariant.

Finally, we show that $\cI^{-1}$ and $\cF^{-1}$ are mutually inverse up to rescaling the central charge by $|G|\neq 0$.
%The fact that $\cI^{-1}$ and $\cF^{-1}$ are homeomorphisms then follows, since the continuity of both $\cI^{-1}$ and $\cF^{-1}$ are given by \cref{lem:domclosedcts}, and rescaling the central charge is itself a homeomorphism.
The fact that $\cI^{-1}$ and $\cF^{-1}$ are biholomorphisms will then follow. 
Indeed, they are locally given by $\bbC$-linear morphisms on the spaces of central charges.
It is also immediate that rescaling central charges by $|G|$ is a $\bbC$-linear isomorphism.
%Hence $\cI^{-1}$ and $\cF^{-1}$ give isomorphisms on the level of tangent spaces, i.e. 
%\begin{equation*}
%	\Hom_{K(\rep(G))}(\Knum(\cD_G),\bbC) \cong \Hom_{K(\vec_G)}(\Knum(\cD),\bbC).
%\end{equation*}

Now suppose $\sigma = (\cP, Z) \in \Stab_{\vec_G}(\cD)$.
Using \cref{lem:FIproperties}(4) and the $G$-invariance of $Z$, we get $Z((\cF \circ \cI)(X)) = |G|\cdot Z$.
The $G$-invariance of $\sigma$ also guarantees that $X \in \cP(\phi) \iff g\otimes X \in \cP(\phi)$ for all $g \in G$.
Together with $\cP(\phi)$ being closed under taking direct sums and summands, we see that $(\cI^{-1} \circ \cF^{-1})\cP(\phi) = \cP(\phi)$.
As such, we get 
\[
(\cI^{-1}\circ \cF^{-1})\sigma = (\cP, |G|\cdot Z).
\]
On the other hand, suppose $\sigma' = (\cP', Z') \in \Stab_{\rep(G)}(\cD^G)$.
Using \cref{lem:FIproperties}(3) together with \cref{lem:Cactionpullback}(ii), we get
\[
(\cF^{-1} \circ \cI^{-1})\sigma' = (\cP', \FPdim(\Bbbk[G])\cdot Z') = (\cP', |G|\cdot Z').
\]
As such, $\cF^{-1}$ and $\cI^{-1}$ are mutually inverse up to rescaling the central charge by $|G|$.
\end{proof}

\section{Applications to \texorpdfstring{$\ell$}{l}-Kronecker quivers and free quotients} \label{sec:app}
As in the last section, we further assume throughout that $\characteristic(\Bbbk) \nmid |G|$.
\subsection{Stability conditions of \texorpdfstring{$\ell$}{l}-Kronecker quivers and McKay quivers}
Let $G$ be a finite group and let $V$ be a $G$-representation.
We first recall the definition of the associated separated McKay quiver.
\begin{definition}[\cite{AusRei_McKay}] \label{defn:McKayquiver}
The \emph{McKay quiver} $Q_{G,V}$ associated to a $G$-representation $V$ is the quiver with vertices given by the irreducible representations $\{S_i\}_{i=1}^m$ of $G$. Each vertex $S_i$ has $t_{i,j}$ arrows from $S_i$ to $S_j$, where $t_{i,j}$ is defined by $V \otimes S_i \cong \bigoplus_{j=1}^m S_j^{\oplus t_{i,j}}$.
The \emph{separated McKay quiver} $\overline{Q}_{G,V}$ is the separated quiver of the McKay quiver $Q_{G,V}$; this is the bipartite quiver with vertices $\{S_i\}_{i=1}^m \sqcup \{S_i'\}_{i=1}^m$ (double the vertices of $Q_{G,V}$). For each arrow $S_i \to S_j$ in $Q_{G,V}$, we have an arrow $S_i \to S_j'$ in $\overline{Q}_{G,V}$.
\end{definition}
Refer to \cref{fig:McKayquiver} for an example of a McKay quiver and its separated version. 

\begin{figure}
\text{McKay quiver: }
% https://q.uiver.app/#q=WzAsMyxbMCwxLCJcXGJ1bGxldCJdLFswLDAsIlxcYnVsbGV0Il0sWzAsMiwiXFxidWxsZXQiXSxbMiwwLCIiLDAseyJvZmZzZXQiOjF9XSxbMCwxLCIiLDAseyJvZmZzZXQiOjF9XSxbMCwyLCIiLDAseyJvZmZzZXQiOjF9XSxbMSwwLCIiLDAseyJvZmZzZXQiOjF9XV0=
\begin{tikzcd}[column sep = small, row sep = small]
	\ydiagram{1,1,1} \\
	\ydiagram{2,1} \arrow[loop right]\\
	\ydiagram{3}
	\arrow[shift right, from=3-1, to=2-1]
	\arrow[shift right, from=2-1, to=1-1]
	\arrow[shift right, from=2-1, to=3-1]
	\arrow[shift right, from=1-1, to=2-1]
\end{tikzcd}
\qquad
\text{Separated McKay quiver: }
% https://q.uiver.app/#q=WzAsNixbMCwxLCJcXGJ1bGxldCJdLFswLDAsIlxcYnVsbGV0Il0sWzAsMiwiXFxidWxsZXQiXSxbMSwwLCJcXGJ1bGxldCJdLFsxLDEsIlxcYnVsbGV0Il0sWzEsMiwiXFxidWxsZXQiXSxbMCw0XSxbMiw0XSxbMSw0XSxbMCwzXSxbMCw1XV0=
\begin{tikzcd}[column sep = small, row sep = small]
	\ydiagram{1,1,1} & \ydiagram{1,1,1} \\
	\ydiagram{2,1} & \ydiagram{2,1} \\
	\ydiagram{3} & \ydiagram{3}
	\arrow[from=2-1, to=2-2]
	\arrow[from=3-1, to=2-2]
	\arrow[from=1-1, to=2-2]
	\arrow[from=2-1, to=1-2]
	\arrow[from=2-1, to=3-2]
\end{tikzcd}
\caption{The McKay quiver vs.\ the separated McKay quiver of the symmetric group $S_3$ with respect to the standard two-dimensional representation \ydiagram{2,1}. The separated version can be viewed as the original one ``pulled apart''.}
\label{fig:McKayquiver}
\end{figure}

Let $\ell$ be the dimension of the $G$-representation $V$.
Now recall that a representation of the generalised $\ell$-Kronecker quiver $K_{\ell}$ with $\ell$ arrows
\[
K_\ell \coloneqq
\begin{tikzcd}
    1
    \arrow[r, draw=none, "\raisebox{+1.5ex}{\vdots}" description]
    \arrow[r, bend left,        "\alpha_1"]
    \arrow[r, bend right, swap, "\alpha_\ell"]
    &
    2
\end{tikzcd}
\]
is determined by $\ell$ morphisms $U_{\alpha_i} \in \Hom(U_1,U_2)$ for $1 \leq i \leq \ell$, (the sum of) which we shall equivalently view as an element $\sum_{i=1}^\ell e_i \otimes U_{\alpha_i} \in V \otimes \Hom(U_1, U_2)$ for some chosen basis $\{e_i\}_{i=1}^\ell$ of $V$.
As such, the $G$-representation $V$ determines an action of $G$ on $\rep(K_\ell)$.
Equivalently, the representation $V$ defines a $G$-action on the path algebra $\Bbbk K_\ell$ of $K_\ell$, hence an action on $\Bbbk K_\ell\mod \cong \rep(K_\ell)$.
We have the following result deduced from \cite{Dem_10,Dem_11}:
\begin{lemma}
The following categories are equivalent:
\begin{equation} \label{eqn:McKayandequivariantcat}
\rep(\overline{Q}_{G,V}) \cong  \rep(K_\ell)^G.
\end{equation}
\end{lemma}
\begin{proof}
By \cite[Proposition 2.48]{Dem_11}, we have the following equivalence:
\[
(\Bbbk K_\ell) G\lmod \cong (\Bbbk K_\ell \lmod)^G,
\]
where $(\Bbbk K_\ell) G$ denotes the skew group algebra (in the sense of \cite{RR_skew}; see also \cite{Dem_10}).
On the other hand, a direct computation shows that the quiver $Q_G$ (this is not the McKay quiver) in \cite[Theorem 1]{Dem_10} is the separated McKay quiver $\overline{Q}_{G,V}$ associated to $V$, hence we obtain the equivalence of categories
\[
\Bbbk \overline{Q}_{G,V}\lmod \cong (\Bbbk K_\ell) G\lmod.
\]
The rest follows from the standard identification between representations of quivers and modules over their path algebras.
\end{proof}
Note that by Auslander--Reiten's generalised McKay correspondence \cite[Theorem 1]{AusRei_McKay}, $\overline{Q}_{G,V}$ is a disjoint union of affine ADE quivers if and only if $\dim(V)=\ell=2$.

From now on, given a quiver $Q$, we shall denote $D^bQ \coloneqq D^b\rep(Q)$.
The generalised $\ell$-Kronecker quivers $K_\ell$ have well-studied stability manifolds:
\begin{itemize}
\item for $\ell= 1$ and $2$, we have $\Stab(D^bK_2) \cong \C^2$ by results in \cite{Qiu_PhDthesis,bridgeland_qiu_sutherland_2020} and \cite{Okada_P1} respectively; and 
\item for $\ell \geq 3$ we have $\Stab(D^bK_\ell) \cong \C \times \bbH$ , where $\bbH$ is the strict upper half plane \cite{DK_KroneckerStab}.
\end{itemize}
\begin{lemma} \label{lem:stabGwhole}
We have that $\Stab_{\vec_G}(D^bK_\ell)$ is the whole stability space $\Stab(D^bK_\ell)$.
\end{lemma}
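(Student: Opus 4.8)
The plan is to show that \emph{both} defining conditions of $\vec_G$-equivariance are automatically satisfied by every stability condition, and then to finish with a dimension-plus-connectedness argument rather than by inspecting individual slicings.

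First I would pin down the $G$-action on the Grothendieck group. The $G$-action on $\rep(K_\ell)$ induced by $V$ acts only on the arrow data (an element of $V\otimes\Hom(U_1,U_2)$) while fixing the two vertices, so each autoequivalence $T_g$ preserves dimension vectors. Since $K_0(D^bK_\ell)\cong K_0(\rep(K_\ell))\cong\Z^2$ records precisely the dimension vector, being generated by the vertex simples $[S_1],[S_2]$, this shows that $T_g$ acts as the identity on $K_0$. In other words, $G$ acts trivially on $K_0(D^bK_\ell)$.

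Second I would feed this into the equivariance conditions. The $K_0(\vec_G)$-action on $K_0(\cD)$ is then trivial, and the $K_0(\vec_G)$-action on $\C$ via $\FPdim$ is also trivial, because $\FPdim(g)=\dim(g)=1$ for each simple $g\in G$ (see \cref{eg:FPdim}). Hence the intertwining condition cutting out $\Hom_{K_0(\vec_G)}(K_0(\cD),\C)$ is vacuous, so this space equals all of $\Hom_{\Z}(K_0(\cD),\C)\cong\C^2$. By \cref{thm:equivariantissubmfld}, $\Stab_{\vec_G}(\cD)$ is therefore a complex submanifold of dimension $2$, which equals $\dim\Stab(D^bK_\ell)$ in both cases.

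Third, I would check nonemptiness and conclude. The standard heart $\rep(K_\ell)$ is preserved by every $T_g$, so it is a $\vec_G$-module subcategory; any stability function on it is length-finite (so the HN property holds) and is automatically $\vec_G$-equivariant by the triviality established above, so \cref{thm: stab function over C and stab cond respecting C} produces a genuine point of $\Stab_{\vec_G}(\cD)$. Now $\Stab_{\vec_G}(\cD)$ is closed by \cref{cor: C-equivariant complex submanifold}, nonempty, and of full dimension---hence open---inside $\Stab(D^bK_\ell)$, which is connected (being $\C^2$ for $\ell=2$ and $\C\times\bbH$ for $\ell\geq 3$). A nonempty clopen subset of a connected manifold is the whole space, giving $\Stab_{\vec_G}(D^bK_\ell)=\Stab(D^bK_\ell)$. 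The main obstacle is really the first step: correctly recognising that the $G$-action only twists arrows and therefore fixes $K_0$. Once triviality on $K_0$ is in hand, the rest is a formal consequence of the submanifold and closedness theorems together with connectedness of the (known) stability space, so that no direct analysis of condition (i) of $G$-invariance for arbitrary slicings is required.
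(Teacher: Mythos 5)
Your proof is correct and takes essentially the same route as the paper's: the $G$-action twists only the arrow data and hence acts trivially on $K_0(D^bK_\ell)$, so the invariant central charges are all of $\Hom_{\Z}(K_0(D^bK_\ell),\C)$, making $\Stab_{\vec_G}(D^bK_\ell)$ open as well as closed, and connectedness of $\Stab(D^bK_\ell)$ finishes the argument. Your explicit nonemptiness check via a stability function on the standard heart $\rep(K_\ell)$ is a careful touch the paper leaves implicit, and it is genuinely needed, since a clopen subset of a connected space is the whole space only if it is nonempty.
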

\begin{proof}
We know that $\Stab_{\vec_G}(D^bK_\ell) = \Stab_G(D^bK_\ell) \subseteq \Stab(D^bK_\ell)$ is a closed subset from \cref{thm:equivariantisclosed}.
Moreover, $G$ acts on $K_0(D^bK_\ell)$ trivially since it does nothing to the simples of $\rep(K_\ell)$, so 
$
\Hom_\bbZ(K_0(D^bK_\ell), \C)^G = \Hom_\bbZ(K_0(D^bK_\ell), \C).
$
As such, $\Hom_\bbZ(K_0(D^b K_\ell),\C)^G$ is open, hence by \cref{thm:equivariantissubmfld}, 
$\Stab_{\vec_G}(D^bK_\ell)$ is an open subset of $\Stab(D^bK_\ell)$. 
Since $\Stab(D^bK_\ell)$ is connected, $\Stab_{\vec_G}(D^bK_\ell)$ must be the whole space.
\end{proof}
This leads to the following:
\begin{corollary}\label{cor:McKaystability}
Let $G$ be a finite group and $V \in \rep(G)$ with $\dim(V)=\ell$.
Together with the action of $G$ on $D^bK_\ell$ induced by $V$, we have that
\[
\Stab_{\rep(G)}(D^b\overline{Q}_{G,V}) \cong \Stab_{\vec(G)}(D^bK_\ell) = \Stab(D^bK_\ell) \cong 
	\begin{cases}
	\C^2, &\text{if } \ell \leq 2;\\
	\C \times \bbH, &\text{if } \ell \geq 3.
	\end{cases}
\]
\end{corollary}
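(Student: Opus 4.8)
The plan is to assemble the corollary from the equivalences and computations already established in the paper, so that the proof is essentially a chain of identifications with no new analytic input. First I would establish the left-hand isomorphism $\Stab_{\rep(G)}(D^bQ_G) \cong \Stab_{\rep(G)}((D^bK_\ell)^G)$ at the level of triangulated $\rep(G)$-module categories. Starting from the equivalence $\rep(Q_G) \cong \rep(K_\ell)^G$ of \eqref{eqn:McKayandequivariantcat}, I would pass to bounded derived categories to obtain $D^bQ_G \cong D^b(\rep(K_\ell)^G)$, and then invoke \cite[Theorem 7.1]{elaginEquivariantTriangulatedCategories2015} to identify $D^b(\rep(K_\ell)^G) \cong (D^b\rep(K_\ell))^G = (D^bK_\ell)^G$. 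The point to verify is that this composite intertwines the two $\rep(G)$-actions (the one on $D^bQ_G$ coming from the skew-group-algebra description, and the one on $(D^bK_\ell)^G$ coming from equivariantisation); granting this, an equivalence of $\rep(G)$-module categories induces a homeomorphism of the associated $\rep(G)$-equivariant stability loci.

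For the right-hand isomorphism I would apply the Morita duality of \cref{thm:moritastability} with $\cD = D^bK_\ell$, which yields a homeomorphism $\Stab_{\rep(G)}((D^bK_\ell)^G) \cong \Stab_{\vec_G}(D^bK_\ell)$. To invoke the theorem I must check \cref{assumptions: bigger categories with small coproducts} for $\cD = D^bK_\ell$; this holds by the example following that assumption, taking $\cA = \Bbbk K_\ell\lmod$ and $\wt{\cA} = \Bbbk K_\ell\lMod$, with $\wt{\cD} = D\wt{\cA}$ the unbounded derived category. Next, \cref{lem:stabGwhole} identifies $\Stab_{\vec_G}(D^bK_\ell)$ with the entire stability manifold $\Stab(D^bK_\ell)$. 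Finally I would substitute the known homeomorphism types of $\Stab(D^bK_\ell)$, namely $\C^2$ for $\ell = 2$ by \cite{Okada_P1} and $\C \times \bbH$ for $\ell \geq 3$ by \cite{DK_KroneckerStab}. Concatenating these homeomorphisms gives the stated result.

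The main obstacle is the compatibility of module structures in the first step: one must confirm that Demonet's equivalence $\rep(Q_G) \cong \rep(K_\ell)^G$, combined with Elagin's derived equivalence, is genuinely $\rep(G)$-equivariant rather than merely an equivalence of underlying triangulated categories. Once that equivariance is secured, every remaining step is a formal application of a result already proved in the paper (\cref{thm:moritastability}, \cref{lem:stabGwhole}) or cited from the literature, and no further information about the geometry of the stability manifolds is required.
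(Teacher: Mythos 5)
Your proposal is correct and takes essentially the same route as the paper, which likewise obtains $D^bQ_G \cong D^bK_\ell^G$ from the identification \eqref{eqn:McKayandequivariantcat} (with the derived/equivariantisation compatibility via \cite[Theorem 7.1]{elaginEquivariantTriangulatedCategories2015} left implicit, having been noted after \cref{assump:triang}) and then concatenates \cref{thm:moritastability}, \cref{lem:stabGwhole}, and the descriptions of $\Stab(D^bK_\ell)$ from \cite{Okada_P1, DK_KroneckerStab}. The only difference is one of detail: you spell out the verification of \cref{assumptions: bigger categories with small coproducts} and the $\rep(G)$-module compatibility of the equivalences, steps the paper treats as immediate.
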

\begin{proof}
We have $D^b\overline{Q}_{G,V} \cong D^bK_\ell^G$ from the identification in \eqref{eqn:McKayandequivariantcat}.
The rest follows from \cref{thm:moritastability} and \cref{lem:stabGwhole}, together with the identification of $\Stab(D^bK_\ell)$ given in \cite{Qiu_PhDthesis,bridgeland_qiu_sutherland_2020, Okada_P1, DK_KroneckerStab} for the cases $\ell = 1, \ell = 2$ and $\ell \geq 3$ respectively.
\end{proof}

\begin{example}
\begin{enumerate}
\item Following \cref{fig:McKayquiver}, we have shown that the submanifold of $\rep(S_3)$-equivariant stability conditions, $\Stab_{\rep(S_3)}(D^b\hat{D}_5)$, is homemorphic to $\C^2$, where $\hat{D}_5$ denotes an affine $D_5$ quiver.
By \cite[Theorem 1]{AusRei_McKay}, some other examples of (bipartite) affine quivers can be obtained from two-dimensional representations of finite groups $G$, such as finite subgroups $G \subset \SU(2)$ with the standard two-dimensional representation.
\item We present here an example for wild types.
Take $G = S_4$ and $V = \ydiagram{3,1}$ the standard 3 dimensional irreducible representation of $S_4$.
In this case, $\overline{Q}_{G,V}$ is the following wild quiver:
% https://q.uiver.app/#q=WzAsMTAsWzAsNCwiXFxidWxsZXQiXSxbMSwzLCJcXGJ1bGxldCJdLFswLDMsIlxcYnVsbGV0Il0sWzEsMiwiXFxidWxsZXQiXSxbMSwxLCJcXGJ1bGxldCJdLFsxLDAsIlxcYnVsbGV0Il0sWzAsMiwiXFxidWxsZXQiXSxbMCwwLCJcXGJ1bGxldCJdLFswLDEsIlxcYnVsbGV0Il0sWzEsNCwiXFxidWxsZXQiXSxbMCwxXSxbMiwxXSxbMiwzXSxbMiw0XSxbMiw1XSxbNiw0XSxbNiwxXSxbNyw0XSxbOCw0XSxbOCwzXSxbOCwxXSxbOCw5XV0=
\[
\overline{Q}_{G,V} =
\begin{tikzcd}[column sep=13ex]
	\ydiagram{1,1,1,1} & \ydiagram{1,1,1,1} \\
	\ydiagram{2,1,1} & \ydiagram{2,1,1} \\
	\ydiagram{2,2} & \ydiagram{2,2} \\
	\ydiagram{3,1} & \ydiagram{3,1} \\
	\ydiagram{4} & \ydiagram{4}
	\arrow[from=5-1, to=4-2]
	\arrow[from=4-1, to=4-2]
	\arrow[from=4-1, to=3-2]
	\arrow[from=4-1, to=2-2]
	\arrow[from=4-1, to=1-2]
	\arrow[from=3-1, to=2-2]
	\arrow[from=3-1, to=4-2]
	\arrow[from=1-1, to=2-2]
	\arrow[from=2-1, to=2-2]
	\arrow[from=2-1, to=3-2]
	\arrow[from=2-1, to=4-2]
	\arrow[from=2-1, to=5-2]
\end{tikzcd}
=
% https://q.uiver.app/#q=WzAsMTAsWzIsMSwiXFxidWxsZXQiXSxbMSwxLCJcXGJ1bGxldCJdLFsxLDAsIlxcYnVsbGV0Il0sWzIsMCwiXFxidWxsZXQiXSxbMCwyLCJcXGJ1bGxldCJdLFsxLDMsIlxcYnVsbGV0Il0sWzIsMywiXFxidWxsZXQiXSxbMywyLCJcXGJ1bGxldCJdLFsxLDQsIlxcYnVsbGV0Il0sWzIsNCwiXFxidWxsZXQiXSxbMCwxXSxbMiwxXSxbMCwzXSxbNCwxXSxbNCw1XSxbNiw1XSxbMCw1XSxbMCw3XSxbNiwxXSxbNiw3XSxbOCw1XSxbOSw2XV0=
\begin{tikzcd}
	& \bullet & \bullet \\
	& \bullet & \bullet \\
	\bullet &&& \bullet \\
	& \bullet & \bullet \\
	& \bullet & \bullet
	\arrow[from=2-3, to=2-2]
	\arrow[from=1-2, to=2-2]
	\arrow[from=2-3, to=1-3]
	\arrow[from=3-1, to=2-2]
	\arrow[from=3-1, to=4-2]
	\arrow[from=4-3, to=4-2]
	\arrow[from=2-3, to=4-2]
	\arrow[from=2-3, to=3-4]
	\arrow[from=4-3, to=2-2]
	\arrow[from=4-3, to=3-4]
	\arrow[from=5-2, to=4-2]
	\arrow[from=5-3, to=4-3]
\end{tikzcd}
\]
Using \cref{cor:McKaystability}, we get that $\Stab_{\rep(G)}(D^b\overline{Q}_{G,V})$ is biholomorphic to $\C \times \bbH$.
It would be interesting to identify the family of quivers that are obtainable as separated McKay quivers.
\end{enumerate}
\end{example}

\subsection{Stability conditions on free quotients}\label{subsection: free quotients}
Let $X$ be a smooth projective variety over $\C$. Throughout we consider \textit{numerical} stability conditions on $D^b\Coh(X)$, i.e. we assume that all central charges factor through the homomorphism to the numerical Grothendieck group, $K_0(X)\rightarrow\Knum(X)$. We also assume that all stability conditions satisfy support property -- from the discussion in \cref{section: support property} the results from earlier in the paper all hold in this context. Let $\Stab(X)\coloneqq\Stab(D^b\Coh(X))$.

\begin{definition}
	A Bridgeland stability condition $\sigma$ on $D^b\Coh(X)$ is called \textit{geometric} if for every point $x\in X$, the skyscraper sheaf $\cO_x$ is $\sigma$-stable.
\end{definition}

We denote by $\Stabgeo(X)$ the set of all geometric stability conditions. Note that by \cite[Proposition 2.9]{fuStabilityManifoldsVarieties2022}, when $\sigma\in\Stabgeo(X)$, all skyscraper sheaves have the same phase with respect to $\sigma$.

Now let $G$ be a finite group acting freely on $X$. This induces a right action of $G$ on $\Coh(X)$ by pullback. In this case, the $G$-equivariantisation $\Coh^G(X)\coloneqq \Coh(X)^G$ is the category of $G$-equivariant coherent sheaves.

Let $\pi\colon X\rightarrow Y\coloneqq X/G$ denote the quotient morphism. Since $G$ acts freely, $Y$ is smooth. Moreover, there is an equivalence
\begin{align*}
	\Psi\colon \Coh(Y) &\stackrel{\sim}{\longrightarrow} \Coh^G(X) \\
	E &\longmapsto (\pi^\ast(E),\lambdanat),
\end{align*}
where the isomorphisms $\lambdanat=\{\lambda_g\}_{g\in G}$ are given by the natural isomorphism
\begin{equation*}
	\lambda_g\colon \pi^\ast (E) \xrightarrow{\sim} g^\ast \pi^\ast (E) = (\pi\circ g)^\ast (E).
\end{equation*}

As before, $D^b\Coh^G(X)\cong (D^b\Coh(X))^G$. There are functors $\pi^\ast\colon D^b\Coh(Y) \rightarrow D^b\Coh(X)$ and $\pi_\ast\colon D^b\Coh(X)\rightarrow D^b\Coh(Y)$. Under $D^b\Coh(Y)\cong D^b\Coh^G(X)$, there is an isomorphism of functors $\cF\cong \pi^\ast$, $\cI\cong \pi_\ast$.

In this setting, \cref{thm:moritastability} preserves geometric stability conditions.
\begin{corollary}\label{cor:geometriccorrespondence}
	The functors $\pi^\ast$, $\pi_\ast$ induce biholomorphisms between the closed submanifolds of stability conditions $\Stab_{\rep(G)}(Y)$ and $\Stab_{\vec_G}(X)$,
	\[
		(\pi_\ast)^{-1}\colon \Stab_{\rep(G)}(Y) \overset{\cong}{\rightleftarrows} \Stab_{\vec_G}(X) \cocolon(\pi^\ast)^{-1},
	\]
	where they are mutually inverse up to rescaling the central charge by $|G|$. Moreover, this preserves geometric stability conditions.
	
	In particular, suppose $\sigma=(\cP_\sigma, Z_\sigma)\in\Stab_{\vec_G}(X)$. Then $(\pi^\ast)^{-1}(\sigma)\eqqcolon\sigma_{Y}=(\cP_{\sigma_{Y}},Z_{\sigma_{Y}})\in\Stab_{\rep(G)}(Y)$ is defined by:
	\begin{align*}
		\cP_{\sigma_{Y}}(\phi) &=\{E\in D^b\Coh(Y) : \pi^\ast(E)\in\cP_\sigma(\phi)\},\\
		Z_{\sigma_{Y}} &= Z_\sigma \circ \pi^\ast,
	\end{align*}
	where $\pi^\ast$ is the natural induced map on $\Knum(Y)$.
\end{corollary}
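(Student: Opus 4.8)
The plan is to obtain the homeomorphism statement as a direct corollary of \cref{thm:moritastability}, and to treat the preservation of geometric stability conditions as the genuinely new content. For the first part, I would check that $\cD = D^b\Coh(X)$ is a triangulated module category over $\vec_G$ satisfying \cref{assumptions: bigger categories with small coproducts}: take $\cA = \Coh(X)$ and $\wt{\cA} = \QCoh(X)$, so that $\wt{\cD} = D\QCoh(X)$ contains all small coproducts and $\cD = D^b\Coh(X)$ is essentially small. Under the identifications $D^b\Coh(X/G) \cong D^b\Coh^G(X) = \cD^G$ and $\cF \cong \pi^\ast$, $\cI \cong \pi_\ast$ recorded before the statement, \cref{thm:moritastability} immediately yields the homeomorphisms $(\pi_\ast)^{-1}$ and $(\pi^\ast)^{-1}$, their mutual-inverse-up-to-rescaling-by-$|G|$ property, and — by unwinding the definition of $\cF^{-1} = (\pi^\ast)^{-1}$ as $\Phi^{-1}$ for $\Phi = \pi^\ast$ — the explicit formulas $\cP_{\sigma_{X/G}}(\phi) = \{E : \pi^\ast E \in \cP_\sigma(\phi)\}$ and $Z_{\sigma_{X/G}} = Z_\sigma \circ \pi^\ast$.

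For the ``moreover'' claim I would first record the geometric input. Since $G$ acts freely, $\pi\colon X \to X/G$ is finite \'etale of degree $|G|$; both $\pi^\ast$ and $\pi_\ast$ are exact and faithful, hence conservative (for $\pi_\ast \cong \cI$, conservativity also follows from \cref{lem:FIproperties}(4), as $\cF\cI(B) = \bigoplus_{g} g \otimes B$ vanishes only if $B$ does); and on skyscrapers one computes $\pi^\ast\cO_y \cong \bigoplus_{x \in \pi^{-1}(y)} \cO_x$ and $\pi_\ast\cO_x \cong \cO_{\pi(x)}$. The $|G|$ points of a fibre $\pi^{-1}(y)$ form a single free $G$-orbit, and the canonical equivariant structure on $\pi^\ast\cO_y$ (coming from $D^b\Coh(X/G)\cong D^b\Coh^G(X)$) permutes these summands transitively. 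I would use throughout that each $\cP(\phi)$ is an abelian category whose simple objects are exactly the stable ones, and that by the formulas above $\pi^\ast$ restricts to an exact functor $\cP_{\sigma_{X/G}}(\phi) \to \cP_\sigma(\phi)$ and $\pi_\ast$ to an exact functor $\cP_\sigma(\phi)\to\cP_{\sigma_{X/G}}(\phi)$.

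For the forward direction, assume $\sigma \in \Stab_{\vec_G}(X)$ is geometric and fix $y \in X/G$. The summands of $\pi^\ast\cO_y = \bigoplus_x \cO_x$ are stable and, forming a single $G$-orbit for the $G$-invariant $\sigma$ (equivalently by \cite[Proposition 2.9]{fuStabilityManifoldsVarieties2022}), share a common phase $p$; hence $\pi^\ast\cO_y$ is $\sigma$-semistable of phase $p$, so $\cO_y$ is $\sigma_{X/G}$-semistable of phase $p$. The crux is upgrading this to stability: given a subobject $A \hookrightarrow \cO_y$ in $\cP_{\sigma_{X/G}}(p)$, exactness of $\pi^\ast$ gives a mono $\pi^\ast A \hookrightarrow \bigoplus_x \cO_x$ in $\cP_\sigma(p)$, and since the $\cO_x$ are pairwise non-isomorphic simple objects there, $\pi^\ast A = \bigoplus_{x \in S}\cO_x$ for some $S \subseteq \pi^{-1}(y)$. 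But $\pi^\ast A$ inherits the $G$-equivariant structure of the subobject $A$, so $S$ must be $G$-stable, and transitivity leaves only $S = \emptyset$ or $S = \pi^{-1}(y)$. Faithfulness of $\pi^\ast$ then forces $A = 0$ or $A = \cO_y$, so $\cO_y$ is stable.

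The backward direction is the mirror image and slightly more direct. Assume $\sigma_{X/G}$ is geometric and fix $x \in X$ with $y = \pi(x)$; then $\pi_\ast\cO_x \cong \cO_y$ is stable, so $\cO_x$ is $\sigma$-semistable of some phase $p$. For a subobject $B \hookrightarrow \cO_x$ in $\cP_\sigma(p)$, the exact functor $\pi_\ast$ produces a short exact sequence $0 \to \pi_\ast B \to \cO_y \to \pi_\ast Q \to 0$ in $\cP_{\sigma_{X/G}}(p)$; simplicity of $\cO_y$ forces $\pi_\ast B = 0$ or $\pi_\ast Q = 0$, and conservativity of $\pi_\ast$ then gives $B = 0$ or $B = \cO_x$. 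Hence $\cO_x$ is stable and $\sigma$ is geometric. I expect the main obstacle to be precisely this passage from semistability to stability in the forward direction, where one must exploit the transitivity of the $G$-action on fibres (equivalently, the simplicity of $\cO_y$ as a $G$-equivariant sheaf) rather than anything about phases or central charges.
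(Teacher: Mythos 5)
Your proof is correct, and for the homeomorphism statement it coincides with the paper's: both verify \cref{assumptions: bigger categories with small coproducts} via $\cA=\Coh(X)$, $\wt{\cA}=\QCoh(X)$, invoke the identifications $\cF\cong\pi^\ast$, $\cI\cong\pi_\ast$, and then quote \cref{thm:moritastability}, reading off the explicit formulas for $\sigma_{X/G}$ from the definition of $\Phi^{-1}\sigma$. The difference is in the ``moreover'': the paper disposes of it in one sentence, observing that the proof of the abelian case in \cite[Theorem 3.3]{dellStabilityConditionsFree2023} never uses the $\widehat{G}$-action and hence generalises immediately, whereas you reconstruct a complete argument in situ. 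Your reconstruction is sound, and its crux --- decomposing $\pi^\ast\cO_y\cong\bigoplus_{x\in\pi^{-1}(y)}\cO_x$ as a multiplicity-free sum of pairwise non-isomorphic simple objects of the abelian category $\cP_\sigma(p)$, so that subobjects are sub-sums indexed by subsets $S\subseteq\pi^{-1}(y)$, and then noting that an equivariant subobject forces $S$ to be $G$-stable, hence trivial by transitivity of the free action --- is precisely the mechanism that makes the cited abelian-case proof independent of the Pontryagin dual; in effect you have verified the paper's one-line claim rather than taken a genuinely different route. What your version buys is self-containedness and a clean isolation of the real content: $\cO_y$ is simple as a $G$-equivariant object, and stability on either side is simplicity in the abelian category $\cP(p)$. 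Two steps deserve to be made explicit: in the backward direction you silently use $\cP_\sigma=(\pi_\ast)^{-1}\cP_{\sigma_{X/G}}$ to see that $\pi_\ast$ maps $\cP_\sigma(p)$ into $\cP_{\sigma_{X/G}}(p)$, which is legitimate because the compositions in \cref{thm:moritastability} only rescale the central charge by $|G|$ and leave the slicings unchanged; and the passage from faithfulness of $\pi^\ast$, $\pi_\ast$ to conservativity (if $F(A)=0$ then $\id_A\mapsto 0$, so $A=0$), which you gesture at via \cref{lem:FIproperties}(4), should be stated. With those two remarks added, your argument is complete and correct.
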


\begin{proof}
	The correspondence is a direct application of \cref{thm:moritastability}. Recall that being $\vec_G$-equivariant is equivalent to being $G$-invariant. When $G$ is abelian, being $\rep_G$-equivariant is equivalent to being $\widehat{G}$-invariant. In the abelian case, \cref{cor:geometriccorrespondence} is exactly \cite[Theorem 3.3]{dellStabilityConditionsFree2023}. The proof that geometric stability conditions are preserved in the abelian case does not use the $\widehat{G}$-action, so it generalises immediately.
\end{proof}

We now describe the $\rep(G)$-action in detail. Recall that $\pi_\ast\cO_X$ splits as a direct sum of vector bundles corresponding to the irreducible representations of $G$,
\begin{equation}\label{eqn: pushforward decomposition into irreps}
	\pi_\ast\cO_X = \bigoplus_{\rho\in\Irr(\rep(G))} E_{\rho}^{\bigoplus \dim \rho}.
\end{equation}
The action of $\rep(G)$ on $D^b\Coh(Y)$ is determined by $-\otimes E_{\rho}$. Alternatively, consider the classifying space $BG$ of $G$. Since $\rep(G)\cong\Coh(BG)$, the action of any $V\in\rep(G)$ on $\Coh(Y)$ can be equivalently described as taking the tensor product with the pullback of $V$ along the morphism $Y=X/G \rightarrow BG$.

Under the equivalence $\Psi\colon \Coh(Y)\rightarrow \Coh^G(X)$,
\begin{equation*}
	\Psi(\pi_\ast\cO_X)= (\pi^\ast \circ \pi_\ast \cO_X, \lambdanat) = (\cO_X, \id) \otimes \bbC[G].
\end{equation*}
The final equality can also be understood as an application of \cref{lem:FIproperties}(iii). The decomposition in \eqref{eqn: pushforward decomposition into irreps} corresponds to the decomposition of $\bbC[G]$ into irreducible representations,
\begin{equation*}
	(\cO_X, \id) \otimes \bbC[G] = \bigoplus_{\rho\in\Irr(\rep(G))} \left((\cO_X,\id)\otimes \rho \right)^{\oplus \dim \rho}.
\end{equation*}
In particular, $\Psi(E_\rho) = (\cO_X,\id)\otimes \rho$. Since pullbacks commute with tensor products and direct sums, the equivalence $\Psi$ is equivariant with respect to the action of $\rep(G)$, i.e. $\Psi(F\otimes E_\rho)\cong \Psi(F)\otimes \rho$ for all $F\in D^b\Coh(Y)$ and $\rho \in \Irr(\rep(G))$.

\begin{lemma}\label{lemma: Z always equivariant on free quotients}
	All central charges on $D^b\Coh(Y)$ are $\rep(G)$-equivariant, in particular
	\begin{equation*}
		\Hom_{K_0(\rep(G))}(K_0(Y),\C)=\Hom_\bbZ(K_0(Y),\C).		
	\end{equation*}
	In particular, $\Stab_{\rep(G)}(Y)$ is a union of connected components of $\Stab(Y)$.
\end{lemma}

\begin{proof}
	To prove that $\Hom_{K_0(\rep(G))}(K_0(Y),\C)=\Hom_\bbZ(K_0(Y),\C)$, it is enough to show that $\rho \cdot [F] = (\dim \rho) [F]$ for any $[F]\in K_0(X)$ and $\rho\in\Irr(\rep(G))$.

	Consider the Chern character map from the Grothendieck group to the Chow group, $\ch\colon K_0(X)\rightarrow  CH_*(X)$. By the Hirzebruch--Riemann--Roch theorem, this is injective. We claim that $\ch([E_\rho])=(\dim \rho, 0, \cdots, 0)$, where $[E_\rho]\in K_0(X)$. To prove this, first note that
	\begin{equation*}
		\Psi(\pi_\ast\cO_X\otimes E_\rho) = \left((\cO_X,\id)\otimes \bbC[G]\right)\otimes \rho = (\cO_X,\id)\otimes (\bbC[G]\otimes \rho).
	\end{equation*}
	Since $\bbC[G]\otimes \rho\cong \bbC[G]^{\oplus \dim \rho}$, we have
	\begin{equation*}
		\Psi(\pi_\ast\cO_X\otimes E_\rho) \cong \left((\cO_X,\id)\otimes \bbC[G] \right)^{\oplus \dim \rho} = \Psi(\left(\pi_\ast\cO_X\right)^{\oplus \dim \rho}).
	\end{equation*}
	It follows that $\pi_\ast\cO_X\otimes E_\rho\cong \left(\pi_\ast\cO_X\right)^{\oplus \dim \rho}$. Recall that $\ch([\pi_\ast\cO_X])=(|G|,0,\cdots, 0)$. Hence
	\begin{equation}\label{eqn: chern character expansion 1}
		\ch([\pi_\ast\cO_X\otimes E_\rho]) = \ch([\pi_\ast\cO_X]) \cdot \dim \rho = (|G|\dim\rho, 0, \cdots, 0).
	\end{equation}
	On the other hand,
	\begin{equation}\label{eqn: chern character expansion 2}
		\ch([\pi_\ast\cO_X\otimes E_\rho]) = \ch([\pi_\ast\cO_X])\cdot\ch([E_\rho]) = (|G|\ch_0(E_\rho), |G|\ch_1(E_\rho), \cdots, |G|\ch_n(E_\rho)).
	\end{equation}
	The claim follows from comparing \eqref{eqn: chern character expansion 1} and \eqref{eqn: chern character expansion 2}. Since $\ch$ is injective, it follows that, for any $[F]\in K_0(X)$, $\rho \cdot [F] = [F \otimes E_\rho]= (\dim \rho) [F]$ as required. 

	Since all central charges are $\rep(G)$-equivariant, by \cref{thm:equivariantissubmfld} it follows that $\Stab_{\rep(G)}(Y)$ is open in $\Stab(Y)$. Moreover, $\Stab_{\rep(G)}(Y)$ is also closed by \cref{thm:equivariantisclosed}, hence it is a union of connected components of $\Stab(Y)$.
\end{proof}

We can now generalise the arguments from the abelian case in \cite[Theorem 3.9, Corollary 3.10]{dellStabilityConditionsFree2023}.

\begin{theorem}\label{thm: Albanese connected component}
	Let $X$ be a smooth projective variety with finite Albanese morphism. Let $G$ be a finite group acting freely on $X$ and let $Y=X/G$. Then $\Stab^\dagger(Y):=\Stab_{\rep(G)}(Y)\cong \Stab_{\vec_G}(X)$ is a union of connected components consisting only of geometric stability conditions.
\end{theorem}

\begin{proof}
	$X$ has finite Albanese morphism, so it follows from \cite[Theorem 2.13]{fuStabilityManifoldsVarieties2022} that all stability conditions on $X$ are geometric. In particular, $\Stab_{\vec(G)}(X)\subset \Stabgeo(X)$. By \cref{cor:geometriccorrespondence}, $\Stab_{\vec_G}(X)\cong \Stab_{\rep(G)}(Y)\subset \Stabgeo(Y)$. The result now follows from \cref{lemma: Z always equivariant on free quotients}.
\end{proof}

\begin{example}[Calabi--Yau threefolds of abelian type]\label{example: CY 3fold of abelian type}
	A \textit{Calabi--Yau threefold of abelian type} is an \'etale quotient $Y=X/G$ of an abelian threefold $X$ by a finite group $G$ acting freely on $X$ such that the canonical line bundle of $Y$ is trivial and $H^1(Y,\C)=0$. These are classified in \cite[Theorem 0.1]{oguisoCalabiYauThreefolds2001}. In particular, $G$ is $(\Z/2\Z)^{\oplus 2}$ or $D_4$, and the Picard rank of $Y$ is 3 or 2 respectively.
	
	By the same discussion in \cite[Example 3.13]{dellStabilityConditionsFree2023}, Theorem \ref{thm: Albanese connected component} produces a non-empty union of connected components of $\Stab(Y)$, which is new in the case of $G=D_4$. 
\end{example}

\begin{example}[Generalised hyperelliptic varieties]\label{example: GHVs}
	A \textit{generalised hyperelliptic variety} $Y=A/G$ is a quotient of an abelian variety $A$ by the free action of a finite group $G$ which contains no translations. These varieties are K\"ahler and have Kodaira dimension zero. \cref{thm: Albanese connected component} applies to these varieties.
	
	In dimension 3 with $G=D_4$, $Y$ is a Calabi--Yau threefold of abelian type. Their construction in \cite{cataneseClassificationHyperellipticThreefolds2020} has been generalised to produce explicit examples of generalised hyperelliptic varieties in dimension $2n+1$ with $G=D_{4n}$ \cite[Theorem 2.1]{aguilovidalFreeDihedralActions2021}. In this case, $A=E^{2n}\times E'$, where $E,E'$ are elliptic curves. One can use the explicit description of the $D_{4n}$ action to prove that $A$ has non-finite Albanese morphism. 

	Another way to produce examples with $G$ non-abelian is to take semi-direct products of abelian groups acting freely on any abelian variety \cite[\S 7]{auffarthSmoothQuotientsComplex2022}.
\end{example}

In the surface case we can say more:
\begin{corollary}\label{cor: finite albanese surface quotient has connected component of geos}
	Let $X$ be a smooth projective surface with finite Albanese morphism. Let $G$ be a finite group acting freely on $X$. Let $S=X/G$. Then $\Stab^\dagger(S)=\Stabgeo(S)\cong \Stab_{\vec_G}(X)$. In particular, $\Stab^\dagger(S)$ is connected and contractible.
\end{corollary}

\begin{proof}
	Since $S$ is a surface, $\Stabgeo(S)$ is connected by \cite[Corollary 5.39]{dellStabilityConditionsFree2023} and contractible by \cite[Theorem A]{rekuskiContractibilityGeometricStability2023}. We conclude by \cref{thm: Albanese connected component}.
\end{proof}

\begin{example}[Non-abelian Beauville-type surfaces]\label{example: non abelian Beauville-type}
	Let $X=C_1\times C_2$, where $C_i\subset\mathbb{P}^2$ are smooth projective curves of genus $g(C_i)\geq 2$. Each curve has finite Albanese morphism, and hence so does $X$. Suppose there is a free action of a finite group $G$ on $X$, such that $S=X/G$ has $q(S):=h^1(S,\cO_S)=0$ and $p_g(S):=h^2(S,\cO_S)=0$. Then $\alb_S$ is trivial. This generalises a construction due to Beauville in \cite[Chapter X, Exercise 4]{beauvilleComplexAlgebraicSurfaces1996}, and we call $S$ a \textit{Beauville-type surface}. These are classified in \cite[Theorem 0.1]{bauerClassificationSurfacesIsogenous2008}. There are 17 families (depending on $G$).

By Corollary \ref{cor: finite albanese surface quotient has connected component of geos}, $\Stab(S)$ has a contractible connected component consisting only of geometric stability conditions. 
This provides a description of a connected component of $\Stab(S)$ for \emph{all} 17 families of Beauville-type surfaces; 5 of which that correspond to abelian groups $G$ were studied in \cite[Example 1.1, Corollary 3.10]{dellStabilityConditionsFree2023}, and our result now includes the 12 other families that correspond to non-abelian $G$. 
These are either $A_5$, $S_4$, $S_4\times \Z/2\Z$, $D_4\times \Z/2\Z$ (where $D_4$ is the dihedral group of order 8), or one of a list of certain higher order groups: $G(16)$, $G(32)$, $G(256, 1)$, $G(256, 2)$. 
Note that the final two are the only cases that have group elements which act by exchanging $C_1$ and $C_2$.
\end{example}

\appendix

\section{The support property}\label{section: support property}

Throughout, $\cD$ will be a triangulated category with a fixed group homomorphism to a lattice $v: K_0(\cD) \ra \Lambda$.
In this appendix we show that our main results (\cref{cor: C-equivariant complex submanifold} and \cref{thm:moritastability}) also hold in the setting of stability conditions with the support property.
We first recall the definition.
\begin{definition}\label{defn: support property}
	A (Bridgeland) stability condition $\sigma=(\cP,Z)$ on $\cD$ satisfies the \textit{support property} (with respect to $(\Lambda,v)$) if
	\begin{enumerate}
		\item $Z$ factors via a finite rank lattice $\Lambda$, i.e. $Z\colon K_0(\cD)\xrightarrow{v} \Lambda \rightarrow \C$, and
		\item there exists a quadratic form $Q$ on $\Lambda\otimes\bbR$ such that
		\begin{enumerate}
			\item $\Ker Z$ is negative definite with respect to $Q$, and
			\item every $\sigma$-semistable object $E\in \cD$ satisfies $Q(v(E))\geq 0$.
		\end{enumerate}
	\end{enumerate}
\end{definition}
\noindent Note that the support property implies locally-finiteness, see \cite[\S A]{bayerSpaceStabilityConditions2016} for details.

For the rest of this appendix, $\Stab(\cD)$ will instead denote the space of stability conditions satisfying the support property.
The result that the space of stability conditions forms a complex manifold (\cref{thm:stabmfld}) has the following refinement in this setting:
\begin{proposition}[{\cite[Proposition A.5]{bayerSpaceStabilityConditions2016}, \cite[Theorem 1.2]{bayerShortProofDeformation2019}}]\label{prop: support property deformation property}
	Let $\cD$ be a triangulated category. Assume $\sigma=(\cP,Z)\in\Stab(\cD)$ satisfies the support property with respect to $(\Lambda,v)$ and a quadratic form $Q$. Consider the open subset of $\Hom_\bbZ(\Lambda,\bbC)$ consisting of central charges whose kernel is negative definite with respect to $Q$, and let $U$ be the connected component containing $Z$. Let $\cZ$ denote the local homeomorphism from \cref{thm:stabmfld}, and let $\cU\subset\Stab(\cD)$ be the connected component of the preimage $\cZ^{-1}(U)$ containing $\sigma$. Then
	\begin{enumerate}
		\item the restriction $\cZ|_{\cU} \colon \cU\rightarrow U$ is a covering map, and 
		\item any stability condition $\sigma'\in\cU$ satisfies the support property with respect to the same quadratic form $Q$.
	\end{enumerate}
\end{proposition}

Now suppose $\cD$ is a triangulated module category over a fusion category $\cC$.
If we make the additional assumption in \cref{lem: openness of fusion-equivariant stability} that $\sigma$ satisfies the support property, then it has an alternate proof (avoiding the need of quasi-abelian categories) which we present below.
In what follows, a stability condition (satisfying the support property) defined via a stability function $Z$ on a heart $\cA$ will be denoted as $(Z, \cA)$.
\begin{lemma}
	Suppose $\sigma=(\cP,Z)\in\Stab(\cD)$ in \cref{prop: support property deformation property} is $\cC$-equivariant. Then there exists $V\subset U$ so that if $W\in V$ is $\cC$-equivariant, the lifted stability condition $\tau=(\cQ,W)\in\cU$, is also $\cC$-equivariant.
\end{lemma}

\begin{proof}
	Since $U$ is open, there exists $\varepsilon>0$ such that the open ball $V\coloneqq B_\varepsilon(Z)$ is contained in $U$. Therefore, there is a path $W_t$ in $V$ parametrised by $t\in [0,1]$ such that $W_0=Z$, $W_1=W$, and there exists $T\in[0,1]$ such that
	\begin{enumerate}
		\item $\Im W_t = \Im Z$ for $t\in[0,T]$,
		\item $\Re W_t = \Re W$ for $t\in[T,1]$.
	\end{enumerate}
	Furthermore, $\Hom_{K_0(\cC)}(\Lambda,\bbC)$ is a $\bbC$-linear subspace of $\Hom_\bbZ(\Lambda,\bbC)$, hence we can choose $W_t$ to be $\cC$-equivariant for all $t\in[0,1]$. By \cref{prop: support property deformation property}, $W_t$ lifts to a path $\tau_t=(\cQ_t,W_t)$ in $\cU$.
	
	Given a path in $\Stab(\cD)$ along which the imaginary part of the central charge is constant, the standard heart $\cP(0,1]$ also remains constant. For a proof of this see for example \cite[Corollary 5.2]{dellStabilityConditionsFree2023}. 
As such, by the construction of $W_t$ we have that 
\[
\cQ_t(0,1] = \cP(0,1] \qquad \text{for } t \in [0,T].
\]
For $t\in[T,1]$, since $\Re W_t$ is constant, we consider instead the ``rotated'' path of stability conditions $\frac{1}{2}\cdot \tau_t = (\frac{1}{2}\cdot\cQ_t, -iW_t)$. Along $\frac{1}{2}\cdot \tau_t$, for $t\in[T,1]$, the imaginary part of the central charge is constant:
\begin{equation*}
	\Im -iW_t = - \Re W_t = - \Re W_T = \Im -i W_T.
\end{equation*}
Hence the standard heart $(\frac{1}{2}\cdot \cQ_t)(0,1]=\cQ_t(1/2,3/2]$ is also constant, i.e.
\[
\cQ_t(1/2,3/2] = \cQ_T(1/2,3/2] \qquad \text{for } t\in[T,1].
\]
%(Note that $\cQ_t(1/2,3/2]$ is the standard heart of $\frac{1}{2}\cdot \tau_t$).
%(Also, note that we are not claiming that $\cQ_t(0,1] = \cQ_T(0,1]$).

By \cref{thm: stab function over C and stab cond respecting C}, since $W_t$ is $\cC$-equivariant, $\tau_t = (W_t, \cP(0,1])$ is $\cC$-equivariant for all $t\in [0,T]$; in particular $\tau_T$ is $\cC$-equivariant and hence $\cQ_T(1/2, 3/2]$ is an abelian module category over $\cC$ (in fact, any $\cQ(a,a+1]$ is).
Using \cref{thm: stab function over C and stab cond respecting C} again we have $\frac{1}{2}\cdot\tau_t = (-iW_t, \cQ_T(1/2,3/2])$ is $\cC$-equivariant for all $t\in[T,1]$.
Since the $\bbC$-action on $\Stab(\cD)$ preserves $\cC$-equivariant stability conditions, $\tau_1 = (\cQ, W)$ is $\cC$-equivariant as required.
%	
%Therefore
%	\begin{enumerate}
%		\item $\implies \cQ_t(0,1] = \cP(0,1]$ for $t\in[0,T]$,
%		\item $\implies \Im iW_t=\Im iW_T$ 
%			$\implies \tilde{i}\cdot\cQ_t(0,1]=\tilde{i}\cdot\cQ_T(0,1]$
%			%This does NOT imply $\cQ_t(0,1]= \cdot\cQ_T(0,1]$!!
%			$\implies \cQ_t(1/2,3/2] = \cQ_T(1/2,3/2]$ for $t\in[T,1]$.
%	\end{enumerate}
%
%	By \cref{thm: stab function over C and stab cond respecting C}, since $W_t$ is $\cC$-equivariant, $\sigma_t = (W_t, \cP(0,1])$ is $\cC$-equivariant for $t\in [0,T]$. Similarly, $-i\cdot\sigma_t = (-iW_t, \cQ_T(1/2,3/2])$ is $\cC$-equivariant for $t\in[T,1]$, and the result follows.
\end{proof}

In \cref{section: closed property}, we explained how to use an exact functor $\Phi\colon \cD \ra \cD'$ satisfying \eqref{assump:nonzero} to induce stability conditions from $\cD'$ to $\cD$. This also works with stability conditions with support property as follows. Given $\sigma' = (\cP', Z') \in \Stab(\cD')$ satisfying support property with respect to $(\Lambda', v')$, we define for each $\phi \in \R$ the following additive subcategory of $\cD$:
\[
\Phi^{-1}\cP'(\phi) \coloneqq \{ E \in \cD \mid \Phi(E) \in \cP(\phi) \},
\]
and group homomorphisms $\Phi^{-1}v'\coloneqq v' \circ\Phi\colon K_0(\cD)\rightarrow \Lambda'$ and $\Phi^{-1}Z'\coloneqq Z' \circ \Phi^{-1}v'$. As before, we have $\Phi^{-1}\sigma'\coloneqq (\Phi^{-1}\cP', \Phi^{-1}Z')\in\Stab(\cD)$, and this satisfies support property with respect to $(\Lambda', \Phi^{-1}v')$. 

As in \cite[Theorem 10.1]{bayerSpaceStabilityConditions2016}, our main results \cref{cor: C-equivariant complex submanifold} and \cref{thm:moritastability} go through with the support property using the above modification.

\printbibliography

\end{document}